\newcommand{\R}{\mathbb{R}}
\newcommand{\C}{\mathbb{C}}
\newcommand{\N}{\mathbb{N}}
\newcommand{\dd}{\mathrm{d}}
\renewcommand{\S}{\mathbb{S}}
\DeclareMathOperator{\supp}{supp}
\DeclareMathOperator{\diag}{diag}
\DeclareMathOperator{\dist}{dist}
\newtheoremstyle{indented}
{7pt} 
{7pt} 
{} 
{1.5em} 
{\bfseries} 
{.} 
{.5em} 
{} 
\theoremstyle{definition}
\newtheorem{defn}{Definition}[section]
\theoremstyle{plain}
\newtheorem*{theorem*}{Theorem}
\newtheorem{theorem}{Theorem}
\newtheorem{prop}[defn]{Proposition}
\newtheorem{lem}[defn]{Lemma}
\theoremstyle{definition}
\newtheorem{rem}[defn]{Remark} 
\renewcommand*\env@matrix[1][*\c@MaxMatrixCols c]{%
  \hskip -\arraycolsep
  \let\@ifnextchar\new@ifnextchar
  \array{#1}}
\title{Toeplitz operators with analytic symbols}
\author{Alix Deleporte\thanks{alix.deleporte@math.uzh.ch}}
\affil{Universit\'e de Strasbourg, CNRS, IRMA UMR 7501, F-67000
  Strasbourg, France\\
  \vspace{1em}
Institut f\"ur Mathematik, Universit\"at Z\"urich, Winterthurerstrasse 190,
CH-8057 Z\"urich, Switzerland}
\newcommand{\fixitemthm}{\leavevmode\vspace{-0.5em}} 
\newcommand\blfootnote[1]{%
  \begingroup
  \renewcommand\thefootnote{}\footnote{#1}%
  \addtocounter{footnote}{-1}%
  \endgroup
}
\begin{document}

\maketitle

\blfootnote{This work was supported by grant
  ANR-13-BS01-0007-01\\
  MSC 2010 Subject classification: 32A25 32W50 35A20 35P10 35Q40 58J40
  58J50 81Q20}
\begin{abstract}
We develop a new semiclassical calculus in analytic regularity, and we
apply these techniques to the study of Berezin-Toeplitz quantization
in real-analytic regularity.
  
We provide asymptotic formulas for
the Bergman projector and Berezin-Toeplitz operators on a compact
K\"ahler manifold. These objects depend on an integer $N$ and we
study, in the limit $N\to +\infty$, situations in which one can
control them up to an error $O(e^{-cN})$ for some $c>0$.

We develop a calculus of Toeplitz
operators with real-analytic symbols, which applies to K\"ahler manifolds
with real-analytic metrics. In particular, we prove that the Bergman kernel
is controlled up to $O(e^{-cN})$ on any real-analytic K\"ahler manifold as
$N\to +\infty$. We also prove that Toeplitz operators with analytic
symbols can be composed and inverted up to $O(e^{-cN})$. As an
application, we study eigenfunction concentration for Toeplitz
operators if both the manifold and the symbol are real-analytic. In
this case we prove exponential decay in the classically forbidden
region.
\end{abstract}

\section{Introduction}
\label{sec:introduction}

Toeplitz quantization
associates, to a real-valued function $f$ on a K\"ahler
manifold $M$, a family
of \emph{Toeplitz operators}, which are self-adjoint linear operators
$(T_N(f))_{N\geq 1}$ acting on holomorphic sections over $M$. Examples of Toeplitz operators are spin
operators (where $M=\S^2$), which
are indexed by the total spin $\frac N2\in \frac 12 \N$. Toeplitz
operators also include pseudodifferential operators on $\R^n$. 
In this paper we study \emph{exponential estimates} for these operators, that is, approximate
expressions with $O(e^{-cN})$ remainder for some $c>0$.

The family of holomorphic section spaces in Toeplitz quantization is described by a sequence of \emph{Bergman
  projectors} $(S_N)_{N\geq 1}$ 
 (also known as Szeg\H{o} projectors). The operators $S_N$ can be written as
integral operators (the integral kernels are sections of suitable bundles over
$M\times M$), and a first step toward understanding Toeplitz
quantization is the asymptotic study, in the limit $N\to +\infty$, of
the Bergman kernel.

We show (Theorem
\ref{thr:Szeg-gen}) that the Bergman kernel
admits an asymptotic expansion in decreasing powers of $N$, up to an error
$O(e^{-cN})$, as soon as the K\"ahler manifold is compact and real-analytic. To study
the Bergman projector,
as well as compositions of Toeplitz operators (Theorem \ref{thr:Compo}), it is useful to
interpret the $N\to +\infty$ limit as a semiclassical limit (with
semiclassical parameter $\hbar=\frac{1}{N}$). We build new semiclassical tools in
real-analytic regularity (in particular, new analytic symbol classes,
see Definition \ref{def:anal-symb}), which can be of more general use.

This study of the calculus of Toeplitz operators allows us to state
results concerning sequences of eigenfunctions of Toeplitz operators
$(T_N(f))_{N\geq 1}$ for a real-analytic $f$. We prove the following (Theorem
\ref{thr:Gen-exp-decay}): if $(u_N)_{N\geq 1}$ is a sequence of
normalised eigenfunctions with energy near $E\in \R$, that is,
\[
  T_N(f)u_N=\lambda_Nu_N,\qquad\qquad
  \lambda_N\stackrel[N\to +\infty]{}{\to}E,\qquad\qquad
  \|u_N\|_{L^2(M,L^{\otimes N})}=1,
\]
and if $V\subset M$ is
an open set at positive distance from $\{x\in M,f(x)=E\}$,
then \[\|u_N\|_{L^2(V,L^{\otimes N})}\leq Ce^{-cN}\] for some $C>0,
c>0$ independent on $N$. We say that
$(u_N)_{N\geq 1}$ has
an \emph{exponential decay rate} on $V$.

In \cite{deleporte_wkb_2019}, we provide an asymptotic expansion, with error
$O(e^{-cN})$, for the
ground state of a Toeplitz operator $T_N(f)$, for $f$ real-analytic
and Morse.


\subsection{Bergman kernels and Toeplitz operators}
\label{sec:toeplitz-operators}
This article is devoted to the study of exponential estimates
concerning the Bergman kernel and Toeplitz operators on K\"ahler
manifolds with real-analytic data. In this subsection we quickly recall the
framework of Toeplitz operators, introduced in \cite{boutet_de_monvel_spectral_1981,bordemann_toeplitz_1994}. We refer the reader to more
detailed introductions \cite{borthwick_introduction_2000,charles_berezin-toeplitz_2003,deleporte_low-energy_2017}.

\begin{defn}\label{def:toep}\fixitemthm
  \begin{itemize}
    \item A compact K\"ahler manifold $(M,J,\omega)$ is said to be
      quantizable when the
symplectic form $\omega$ has integer cohomology: there exists a
unique Hermitian line bundle $(L,h)$ over $M$ such that the curvature
of $h$ is $-2i\pi
\omega$. This line bundle is called the prequantum line bundle over
$(M,J,\omega)$. The manifold $(M,J,\omega)$ is said to be \emph{real-analytic} when
$\omega$ (or, equivalently, $h$) is real-analytic on the complex
manifold $(M,J)$.
\item Let $(M,J,\omega)$ be a quantizable compact K\"ahler manifold
  with $(L,h)$ its prequantum bundle and let
  $N\in \N$.
  \begin{itemize}
  \item The Hilbert space $L^2(M,L^{\otimes N})$ is the closure of
    $C^{\infty}(M,L^{\otimes N})$, the space of smooth sections of
    the $N$-th tensor power of $L$, for the scalar product
    \[
      \langle u,v\rangle=\int_M\langle u(x),v(x)\rangle_{L^{\otimes
          N}_x}\frac{\omega^{\wedge \dim_{\C}M}}{(\dim_{\C}M)!}.
    \]
    \item The Hardy space $H^0(M,L^{\otimes N})$ is the
space of holomorphic sections of $L^{\otimes N}$. It is a
finite-dimensional, closed subspace of $L^2(M,L^{\otimes N})$.
  \item The Bergman projector $S_N$ is the orthogonal projector from
    the space
  $L^2(M,L^{\otimes N})$ to its subspace $H^0(M,L^{\otimes N})$.

  \item The contravariant Toeplitz operator associated with a
symbol $f\in L^{\infty}(M,\C)$ is
defined as
\begin{align*}
  T_N(f):H^0(M,L^{\otimes N})&\to H^0(M,L^{\otimes N})\\
  u&\mapsto S_N(fu).
\end{align*}
\end{itemize}
\end{itemize}
\end{defn}
In a related
way, one can define \emph{covariant} Toeplitz operators, which are
kernel operators acting on $H^0(M,L^{\otimes N})$ (see
Definition \ref{def:Anal-Toep}).
We are interested  the Bergman projector and both types  of Toeplitz
operators in the \emph{semiclassical limit} $N\to
+\infty$.

A particular motivation for the study of Toeplitz operators is the
quantization, on $M=(\S^2)^{d}$, of polynomials in the coordinates (in the standard immersion of
$\S^2$ into $\R^3$). The operators obtained are spin operators, with
total spin $\frac N2$. Tunnelling effects in spin
systems, in the large spin limit, are widely studied in the physics
literature (see \cite{owerre_macroscopic_2015} for a review). This article also aims at giving a mathematical ground
to this study.

The Bergman kernel plays a role in many aspects of complex geometry
and complex algebraic geometry
\cite{tsuji_dynamical_2010,ross_asymptotics_2016} as well as random
matrices \cite{ameur_fluctuations_2011,klevtsov_random_2014}, expanding the range of potential
applications for Theorem \ref{thr:Szeg-gen}. Beyond the statements of
our main results,  our new microlocal analytic
tools, developed in Section \ref{sec:calc-analyt-symb} may be used
again in many different contexts, including transfer operators and
quantized symplectomorphisms. As a matter of fact, we only deal
with direct summation techniques, and the involved techniques of
resummation or resurgence might be used to further broaden the range
of applications of our tools.

 We will use the
following estimate on the operator $\overline{\partial}$ acting on
$L^2(M,L^{\otimes N})$ and the Bergman projector $S_N$.

\begin{prop}\label{prop:Kohn}
Let $(M,\omega,J)$ be a compact quantizable K\"ahler manifold and
$(S_N)_{N\geq 1}$ be the associated sequence of Bergman projectors. There exists $C>0$ such that, for every
$N\geq 1$ and $u\in L^2(M,L^{\otimes N})$, one has:
\begin{equation}
  \label{eq:Kohn}
  \|\overline{\partial}u\|_{L^2}\geq C\|u-S_Nu\|_{L^2}.
\end{equation}
\end{prop}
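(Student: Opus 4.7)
The first step is a reduction to a uniform spectral-gap statement. Since $S_N$ is the orthogonal projector onto the holomorphic sections, we have $\overline{\partial} S_N u = 0$, hence $\overline{\partial} u = \overline{\partial}(u - S_N u)$ while $u - S_N u$ is orthogonal to $H^{0}(M,L^{\otimes N})$. The inequality \eqref{eq:Kohn} is therefore equivalent to
\[
  \|\overline{\partial} v\|_{L^{2}}^{2} \;\geq\; C^{2}\,\|v\|_{L^{2}}^{2}
  \qquad\text{for all }v\in H^{0}(M,L^{\otimes N})^{\perp},
\]
with $C>0$ independent of $N$; in other words, the Kodaira Laplacian $\Box_{N} := \overline{\partial}^{*}\overline{\partial}$ acting on sections of $L^{\otimes N}$ must admit a spectral gap above its kernel $H^{0}(M,L^{\otimes N})$ that is bounded below uniformly in $N$.

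Next I would transfer this into a lower bound on $(0,1)$-forms. For any closed, densely defined operator, $T^{*}T$ and $TT^{*}$ have identical non-zero spectrum (with multiplicities), so the non-zero spectrum of $\overline{\partial}^{*}\overline{\partial}$ on sections coincides with that of $\overline{\partial}\,\overline{\partial}^{*}$ on $\Omega^{0,1}(M,L^{\otimes N})$, and it suffices to bound below the full Kodaira Laplacian $\Box_{N}^{(0,1)} := \overline{\partial}^{*}\overline{\partial} + \overline{\partial}\,\overline{\partial}^{*}$ on $(0,1)$-forms. On a compact K\"ahler manifold, the Bochner--Kodaira--Nakano identity gives
\[
  \langle \Box_{N}^{(0,1)}\alpha,\alpha\rangle \;=\; \|\nabla^{0,1}\alpha\|^{2} + \bigl\langle \bigl([i\Theta(L^{\otimes N}),\Lambda] + R_{M}\bigr)\alpha,\alpha\bigr\rangle,
\]
where $\Theta(L^{\otimes N}) = -2\pi i N\,\omega$ is the Chern curvature and $R_{M}$ collects curvature terms depending only on $(M,\omega,J)$, hence bounded uniformly in $N$. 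On $(0,1)$-forms the commutator $[i\Theta(L^{\otimes N}),\Lambda]$ is a positive endomorphism of order $N$, bounded below by $2\pi N\cdot\mathrm{Id}$; consequently there exist $c>0$ and $N_{0}\in\N$ such that $\Box_{N}^{(0,1)} \geq cN\cdot\mathrm{Id}$ on $(0,1)$-forms as soon as $N\geq N_{0}$.

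For the finitely many values $N<N_{0}$, the operator $\Box_{N}$ is elliptic on a line bundle over the compact manifold $M$, so its spectrum is discrete, and its kernel is exactly $H^{0}(M,L^{\otimes N})$; the smallest non-zero eigenvalue $\lambda_{N}$ is therefore strictly positive. Setting $C^{2} := \min\bigl(cN_{0},\,\lambda_{1},\,\ldots,\,\lambda_{N_{0}-1}\bigr)$ produces the desired uniform lower bound. The only substantive input is the Bochner--Kodaira--Nakano positivity on $(0,1)$-forms, which is a classical consequence of the positivity of the prequantum bundle $L$; there is no genuine obstacle here, and the proposition plays the role of a standard ellipticity black box for the analytic estimates developed later in the paper.
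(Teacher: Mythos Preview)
The paper does not give a proof of this proposition: it is stated as a known estimate and attributed to Kohn (with the remark that it is often named after H\"ormander or Kodaira). Your sketch therefore supplies what the paper leaves to the literature, and its architecture --- reduce to a uniform spectral gap for $\Box_N=\overline{\partial}^{*}\overline{\partial}$ on $H^{0}(M,L^{\otimes N})^{\perp}$, identify this with the bottom of the spectrum of $\Box_N^{(0,1)}$, obtain a gap of order $N$ from a Weitzenb\"ock-type identity, and treat the finitely many remaining $N$ by discreteness of the spectrum --- is exactly the standard route (Bismut--Vasserot, Ma--Marinescu).

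One point of caution on the displayed identity. With the usual convention, the commutator $[i\Theta,\Lambda]$ in the Bochner--Kodaira--Nakano identity $\Box''=\Box'+[i\Theta,\Lambda]$ has eigenvalues $\sum_{j\in J}\lambda_j+\sum_{k\in K}\lambda_k-\sum_j\lambda_j$ on $dz_J\wedge d\bar z_K$; on $(0,1)$-forms with $i\Theta(L^{\otimes N})=2\pi N\omega$ this equals $2\pi N(1-\dim_{\C}M)$, which is nonpositive as soon as $\dim_{\C}M\geq 2$. The positivity you want comes instead either from passing to $(n,1)$-forms with values in $L^{\otimes N}\otimes K_M^{-1}$ (where the same identity \emph{is} positive for $N$ large), or --- more transparently --- from the Lichnerowicz formula for the spin$^c$ Dirac operator $\sqrt{2}(\overline{\partial}+\overline{\partial}^{*})$, in which the Clifford action of $R^{L^{\otimes N}}$ on $\Lambda^{0,q}$, $q\geq 1$, is bounded below by a positive multiple of $N$. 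This is a cosmetic repair rather than a genuine gap: the strategy is sound and the conclusion correct, but the curvature term as literally written would not close the argument in complex dimension $\geq 2$.
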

This estimate initially follows from the work of Kohn
\cite{kohn_harmonic_1963,kohn_harmonic_1964}; it is
widely used in the asymptotic study of the Bergman kernel, where it is
sometimes named after H\"ormander or Kodaira.

The Bergman projector $S_N$ admits a kernel, in a sense which we make
precise here. The space $H^0(M,L^{\otimes N})$ is finite-dimensional,
so that it
is spanned by a Hilbert basis $s_1,\ldots,s_{d_N}$ of holomorphic
sections of $L^{\otimes N}$. The following section of $L^{\otimes
  N}\boxtimes \overline{L}^{\otimes N}$ is the integral kernel of the
Bergman projector:
\[
  S_N(x,y)=\sum_{i=1}^{d_N}s_i(x)\otimes \overline{s_i(y)}.
\]

Here $\overline{L}$ is the complex conjugate bundle of $L$, and
$\boxtimes$ stands for pointwise direct product: $L^{\otimes N}\boxtimes
\overline{L}^{\otimes N}$ is a bundle over $M\times M$.
More generally, any section of $L^{\otimes N}\boxtimes
\overline{L}^{\otimes N}$ gives rise to an operator on
$L^2(M,L^{\otimes N})$.

\subsection{Statement of the main results}
\label{sec:stat-main-results}

We begin with the definition of what will be the phase of the
Bergman kernel. We use the standard notion of holomorphic extensions of
real-analytic functions and manifolds, under a notation convention
which is recalled in detail in Section \ref{sec:extensions-manifolds}.

\begin{defn}\label{def:natural-section}
  Let $M$ be a real-analytic K\"ahler manifold. Let
  $\overline{M}=(M,\omega,-J)$ be the complex conjugate of $M$:
  holomorphic data on $M$ correspond to anti-holomorphic data on
  $\overline{M}$. The codiagonal $\{(x,x),x\in
  M\}\subset M\times \overline{M}$ is a totally
  real submanifold.

  In particular, there exists a neighbourhood $U$ of the diagonal in
  $M\times \overline{M}$ and a unique holomorphic section $\Psi$ of
  $L\boxtimes \overline{L}$ over $U$ such that $\Psi=1$ on the
  diagonal. Its $N$-th tensor power $\Psi^N$ is the unique holomorphic
  section of $L\boxtimes \overline{L}$ over $U$ such that $\Psi^N=1$
  on the codiagonal.
\end{defn}
It is well-known that the pointwise norm of $\Psi$ decays away from
the codiagonal:
\[
  |\Psi(x,y)|_h=e^{-\frac 12 \dist(x,y)^2+O(\dist(x,y)^3)}.
\]
In the general setting of a K\"ahler manifold with real-analytic data, it
has been conjectured by S. Zelditch that the Bergman
kernel takes the following form: for some $c>0,c'>0$, for all $(x,y)\in M^2$,
\[
  S_N(x,y)=\Psi^N(x,y)\sum_{k=0}^{cN}N^{d-k}a_k(x,y)+O(e^{-c'N}),
\]
where the $a_k$ are, in a neighbourhood of the diagonal in
$M\times M$, holomorphic in the first variable and anti-holomorphic in
the second variable, with
\[
  \|a_k\|_{C^0}\leq CR^kk!.
\]

The well-behaviour of such sequences of functions when the sum $\sum
N^{-k}a_k$ is computed up to the rank $k=cN$ with $c<e/2R$ is well
described in
\cite{sjostrand_singularites_1982} and is the foundation for a theory of
analytic pseudodifferential operators and Fourier Integral
Operators. Here, we rely on more specific function classes.
Without giving a precise definition at this stage let us call 
``analytic symbols'' well-controlled sequences of real-analytic
functions. See Definition \ref{def:anal-symb} about the analytic symbol spaces
$S^{r,R}_m(X)$ and the associated summation. The introduction of these
classes allows us to prove the conjecture.

\begin{theorem}
  \label{thr:Szeg-gen}
  Let $M$ be a quantizable compact real-analytic K\"ahler manifold of
  complex dimension $d$. There
  exists positive constants $r,R,m,c,c',C$, a neighbourhood $U$ of the
  diagonal in $M\times M$, and an analytic symbol
  $a\in S^{r,R}_m(U)$, holomorphic in the first variable,
  anti-holomorphic in the second variable, such that the Bergman
  kernel $S_N$ on $M$ satisfies, for each $x,y\in M\times M$ and
  $N\geq 1$:
  \[
    \left\|S_N(x,y)-\Psi^N(x,y)\sum_{k=0}^{cN}N^{d-k}a_k(x,y)\right\|_{h^{\otimes
        N}}\leq
    C e^{-c'N}.
    \]
  \end{theorem}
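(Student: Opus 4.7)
The plan is to construct, on a neighbourhood $U$ of the diagonal in $M\times M$, an approximate Bergman kernel
\[
  \tilde S_N(x,y)\;=\;\Psi^N(x,y)\sum_{k=0}^{cN}N^{d-k}a_k(x,y),
\]
with $(a_k)\in S^{r,R}_m(U)$ holomorphic in $x$ and anti-holomorphic in $y$, and then to identify $\tilde S_N$ with $S_N$ up to $O(e^{-c'N})$ by combining Proposition \ref{prop:Kohn} with an approximate reproducing property.

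First I would derive the transport equations that fix the $a_k$. In a local holomorphic trivialisation of $L$ near a point of $M$, the section $\Psi^N$ takes the form of the exponential of $-N$ times the holomorphic/anti-holomorphic extension of a local K\"ahler potential, with $|\Psi^N|_h$ Gaussian concentrated at $y=x$ as recalled after Definition \ref{def:natural-section}. Imposing the approximate reproducing identity
$\int_M \tilde S_N(x,y) u(y)\,\dd\mathrm{vol}(y) = u(x) + O(e^{-c'N})$
for every holomorphic $u$, and performing a formal steepest-descent expansion at the unique critical point $y=x$ of the phase, produces a triangular recursion $L_0 a_0 = 1$, $L_0 a_{k+1} = D_k(a_0,\dots,a_k)$, in which $L_0$ is multiplication by a non-vanishing real-analytic function built from the K\"ahler metric and each $D_k$ is a holomorphic differential operator of bounded order with real-analytic coefficients. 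The recursion is solved in the class of functions holomorphic in $x$ and anti-holomorphic in $y$ on a neighbourhood of the diagonal, imposing $a_k(x,y)=\overline{a_k(y,x)}$ so that the resulting kernel $\tilde S_N$ is self-adjoint.

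The main obstacle is to show that $(a_k)$ satisfies Gevrey-1 bounds $\|a_k\|_{C^0(U)}\leq C m^k R^k k!$, i.e.\ belongs to a class $S^{r,R}_m(U)$; this is precisely what the analytic symbol machinery of Section \ref{sec:calc-analyt-symb} is designed for. Since the $D_k$ have real-analytic coefficients of uniformly bounded order, Cauchy estimates on a geometric sequence of nested shrinking polydisks translate each step of the recursion into an additional factor $Rk$, producing the required $R^k k!$ growth. Once this is established, choosing $c = 1/(2eR)$ and truncating at $k_N = \lfloor cN\rfloor$ makes the first omitted term pointwise $O(e^{-c'N})$, and the Gaussian decay of $|\Psi^N|_h$ absorbs the $N^d$ prefactor to bound the full truncation remainder uniformly by $Ce^{-c'N}$.

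It remains to identify $\tilde S_N$ with $S_N$. Since each $a_k$ is holomorphic in $x$, the only contribution to $\bar\partial_x \tilde S_N$ comes from the truncation and is pointwise $O(e^{-c'N})$; applying Proposition \ref{prop:Kohn} to each column of $\tilde S_N$ gives $S_N\tilde S_N = \tilde S_N + O(e^{-c'N})$ as operators on $L^2$, and the self-adjoint choice of symbol gives $\tilde S_N S_N = \tilde S_N + O(e^{-c'N})$ after taking adjoints. Composing these two, $S_N \tilde S_N S_N = \tilde S_N + O(e^{-c'N})$. On the other hand $S_N$ is the identity on $H^0(M,L^{\otimes N})$ and vanishes on its orthogonal complement, while the approximate reproducing property gives $\tilde S_N u = u + O(e^{-c'N})$ for $u$ holomorphic, which implies $S_N\tilde S_N S_N = S_N + O(e^{-c'N})$; hence $\tilde S_N = S_N + O(e^{-c'N})$ on $L^2$. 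Upgrading this operator bound to the pointwise estimate of the theorem uses that $S_N-\tilde S_N$ is, up to an exponentially small term, a holomorphic-antiholomorphic section of $L^{\otimes N}\boxtimes \overline L^{\otimes N}$, so that local $L^2$-control entails local $C^0$-control via a Bergman-type inequality, at the cost of a polynomial loss in $N$ that is absorbed by a slight decrease of $c'$. A partition-of-unity argument then glues the local constructions into a globally defined symbol on a uniform neighbourhood $U$ of the diagonal, overlap discrepancies being exponentially small by formal uniqueness of the symbol, while for $(x,y)$ outside $U$ the announced estimate follows from the off-diagonal exponential decay of $S_N$ itself, another standard consequence of Proposition \ref{prop:Kohn}.
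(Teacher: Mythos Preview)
Your approach is genuinely different from the paper's and is essentially the ``local Bergman kernel'' route of Berman--Berndtsson--Sj\"ostrand, which the paper itself cites (just after the statement of Theorem~\ref{thr:Szeg-gen}) as the method of the independent work \cite{rouby_analytic_2018}. The paper instead derives Theorem~\ref{thr:Szeg-gen} from its covariant Toeplitz calculus: one picks an arbitrary nonvanishing $f$, uses Proposition~\ref{prop:cov-inv} to solve $f\sharp a=f$ in an analytic symbol class (this is where the $S^{r,R}_m$ classes, the Wick rule of Proposition~\ref{prop:bound-nb-deriv}, and Lemma~\ref{prop:lem-hard} do the work), and then invokes the bounded invertibility of $S_NT_N^{cov}(f)$ on $H^0(M,L^{\otimes N})$ (Proposition~\ref{prop:pre-inv}) to conclude $T_N^{cov}(a)=S_N+O(e^{-cN})$. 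No transport equations are written down, no shrinking polydisks, no partition-of-unity gluing; the construction is global from the start and the symbol $a$ is unique because it must agree with the formal Bergman symbol. Your approach buys a more direct and self-contained construction of $a$; the paper's buys the composition theorem (Theorem~\ref{thr:Compo}) simultaneously.

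Two points in your sketch need tightening. First, your appeal to Section~\ref{sec:calc-analyt-symb} for the analytic bounds is misplaced: the paper's machinery controls the recursion via the weights $(j+k+1)^{-m}$ and Lemma~\ref{prop:lem-hard}, not via Cauchy estimates on shrinking polydisks; the latter is the classical Sj\"ostrand device and works, but it is a different argument from what the paper provides, and your phrase ``$D_k$ of bounded order'' should read ``order growing linearly in $k$''. Second, and more seriously, the step ``the approximate reproducing property gives $\tilde S_N u=u+O(e^{-c'N})$ for $u$ holomorphic'' is, as written, circular: from (b) you already have $\tilde S_N S_N=\tilde S_N+O(e^{-c'N})$, so asserting $\tilde S_N S_N=S_N+O(e^{-c'N})$ is exactly the conclusion $\tilde S_N=S_N+O(e^{-c'N})$. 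The formal transport equations do not by themselves yield this operator estimate; you must apply the analytic stationary phase (Proposition~\ref{prop:HSPL}) to $\int\tilde S_N(x,y)u(y)\,\mathrm{d}y$ uniformly over $u\in H^0(M,L^{\otimes N})$, controlling the holomorphic local representative of $u$ on the deformed contour, or equivalently establish $\tilde S_N^2=\tilde S_N+O(e^{-c'N})$ (a genuine covariant composition) together with invertibility of $\tilde S_N$ on $H^0$ from the known $O(N^{-\infty})$ theory. Either fix is standard, but one of them must be supplied.
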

  Equivalently, the operator with kernel
  $\Psi^N(x,y)\sum_{k=0}^{cN}N^{d-k}a_k(x,y)$ is exponentially close
  (in the $L^2\to L^2$ operator sense) to the Bergman projector.
  
   Theorem \ref{thr:Szeg-gen} also appears in recent and independent work
  \cite{rouby_analytic_2018}, where the authors use Local Bergman
  kernels as developed in \cite{berman_direct_2008} to study locally
  the Bergman kernel as an analytic Fourier Integral Operator. Here,
  we obtain it as a byproduct of the next theorem about composition
  and inversion of Toeplitz operators.

  In order to study contravariant Toeplitz operators of Definition
  \ref{def:toep}, as well as the Bergman kernel itself, it is useful
  to consider \emph{covariant} Toeplitz operators
  \cite{charles_berezin-toeplitz_2003}. With $\Psi^N$ as above, and
  $f:M\times \overline{M}\to \C$ holomorphic near the diagonal, we
  let
\[
  T_N^{cov}(f)(x,y)=\Psi^N(x,y)\left(\sum_{k=0}^{cN}
    N^{d-k}f_k(x,y)\right),
\]
for some small $c>0$; see Definition \ref{def:Anal-Toep}.
\begin{theorem}
  \label{thr:Compo}
  Let $M$ be a quantizable compact real-analytic K\"ahler manifold.
  Let $f$ and $g$ be analytic symbols on a neighbourhood $U$ of the
  diagonal in $M\times M$,
  which are holomorphic in the first variable and
  anti-holomorphic in the second variable.

  Then there exists $c'>0$ and an analytic
  symbol $f\sharp g$ on the same neighbourhood $U$, holomorphic in the first variable and
  anti-holomorphic in the second variable, and
  such that
  \[T_N^{cov}(f) T_N^{cov}(g)=T_N^{cov}(f\sharp g)+O(e^{-c'N}).\]
  For any $r,R,m$ large enough, the product $\sharp$ is a continuous
  bilinear map from $S^{r,R}_m(U)\times S^{2r,2R}_m(U)$ to
  $S^{2r,2R}_m(U)$ (see Definition \ref{def:anal-symb}); the constant $c'$ depends
  only on $r,R,m$.
  
  If the principal symbol of $f$ does not vanish on $M$ then there is an analytic symbol
  $f^{\sharp -1}$ such that, for some $c'>0$, one has
  \[
    T_N^{cov}(f)T_N^{cov}(f^{\sharp -1})=S_N+O(e^{-c'N}).\]
  Given an analytic symbol $f\in S^{r_0,R_0}_{m_0}(U)$ with non-vanishing
  subprincipal symbol, there exists $C>0$ such that for every $r,R,m$ large enough (depending on
  $f,r_0,R_0,m_0$), one has
  \[
    \|f^{\sharp -1}\|_{S^{r,R}_m(U)}\leq C\|f\|_{S^{r,R}_m(U)}.
    \]
  \end{theorem}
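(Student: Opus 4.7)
The plan is to first establish the composition formula and then to deduce the inversion statements from it by a Neumann-type iteration.

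For the composition, I would write the kernel of $T_N^{cov}(f)T_N^{cov}(g)$ explicitly as
\[
\int_M \Psi^N(x,z)\,\Psi^N(z,y)\,A(x,z)\,B(z,y)\,\frac{\omega^{\wedge d}(z)}{d!},
\]
with $A=\sum_{k\leq cN}N^{d-k}f_k$ and $B=\sum_{k\leq cN}N^{d-k}g_k$. Factoring out $\Psi^N(x,y)$, the residual phase $\Phi(x,y,z)=\log[\Psi(x,z)\Psi(z,y)/\Psi(x,y)]$ extends holomorphically to a neighbourhood of the diagonal in the complexification of $M\times M\times M$, where the anti-holomorphic variable of $z$ is treated as independent, and admits there a non-degenerate critical point at $z=y$ (a Morse critical point at the codiagonal when $x=y$). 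I would then apply a real-analytic stationary phase method of Sjöstrand type: deform the real contour in $z$ to a steepest descent contour through this critical point and expand the analytic symbols $A,B$ at the critical point. The leading asymptotics, truncated at $k\sim cN$, produce an analytic symbol $f\sharp g$ on $U$; the remainder, combining the truncation tail and the contour-deformation error, is $O(e^{-c'N})$ provided the summation radius $c$ is small compared to the Gevrey radius $R$. Bilinear continuity $S^{r,R}_m(U)\times S^{2r,2R}_m(U)\to S^{2r,2R}_m(U)$ is obtained by tracking the Faà di Bruno combinatorics in the stationary-phase coefficients, which closes in the symbol class only when $r,R,m$ are large enough.

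For the inversion, I would proceed iteratively. Since the principal symbol $f_0$ is non-vanishing on $M$, the function $g_0=1/f_0$ is real-analytic on a neighbourhood $U$ of the diagonal (shrinking $U$ if necessary). Using the composition rule one computes $f\sharp g_0=1+N^{-1}r_1+\ldots$ and inductively determines $g_k$ so that the $k$-th order coefficient in $f\sharp(g_0+N^{-1}g_1+\ldots+N^{-k}g_k)$ vanishes. Each $g_k$ is a polynomial in derivatives of $f_0,\ldots,f_k$ and $1/f_0$, and the Gevrey-$1$ control $\|g_k\|_{C^0}\leq C R^k k!$ follows from the same combinatorial estimates used for $\sharp$. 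Summing up to $k\sim cN$ yields $f^{\sharp-1}\in S^{r,R}_m(U)$, and the identity $T_N^{cov}(f)T_N^{cov}(f^{\sharp-1})=S_N+O(e^{-c'N})$ follows from the composition formula together with $T_N^{cov}(1)=S_N+O(e^{-c'N})$, which is Theorem \ref{thr:Szeg-gen}. The final bound on $\|f^{\sharp-1}\|_{S^{r,R}_m(U)}$ is obtained by tracking constants through the recursion, the dependence on $f$ entering only via its lower-order symbol norm and a lower bound of $|f_0|$ on $M$.

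The main obstacle is the quantitative real-analytic stationary phase estimate in the symbol class $S^{r,R}_m(U)$. In the smooth setting one expands to any fixed finite order, but here the expansion must be pushed to $k\sim cN$, and the combined error from contour deformation, symbol truncation and Taylor remainder must be uniformly $O(e^{-c'N})$ on $M\times M$. This requires the specific Gevrey-type scales built into Definition \ref{def:anal-symb} and is exactly what forces the loss from $(r,R,m)$ to $(2r,2R,m)$ in the composition, and from $(r_0,R_0,m_0)$ to $(r,R,m)$ in the inversion. Once this stationary phase step is in place, the remaining algebraic, combinatorial and iterative parts are routine within the symbol class.
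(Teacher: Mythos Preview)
Your outline follows the paper's general strategy (stationary phase for composition, recursion for inversion), but it misses the one non-trivial idea on which the whole composition estimate hinges, and the inversion step contains a genuine error.

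\textbf{Composition.} When you apply stationary phase to the phase $\Phi_1$, the $n$-th term involves $\Delta_v^n/n!$ applied to $f_l(x,\overline{y}(v))\,g_{k-n-l}(y(v),\overline{z})\,J(v)$. A priori this is a differential operator of order $2n$, so in the worst case all $2n$ derivatives could fall on $f_l$ (or on $g_{k-n-l}$). With that count, the Fa\`a di Bruno bookkeeping does \emph{not} close in $S^{2r,2R}_m$; you would lose a factor $(2n)!/n!$ which destroys the factorial balance. The paper's key input is Proposition~\ref{prop:bound-nb-deriv} (the Wick rule): the bidifferential operator $B_n$ actually has order at most $n$ in each argument. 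This is not visible from the stationary-phase formula, because the Morse change of variables does \emph{not} split as $(y,\overline{w})\mapsto (v(y),\overline{v}(\overline{w}))$; the paper gives an explicit counterexample on $\S^2$. The Wick bound is proved separately in the appendix by deforming to normalised covariant operators and integrating by parts. Without invoking this, ``tracking the Fa\`a di Bruno combinatorics'' does not yield the stated bilinear continuity; it is precisely the truncation $(\nabla_\kappa^i)^{[\leq n]}$ in Lemma~\ref{lem:ctrl-cht-var}, justified by the Wick rule, that makes the factorial estimate go through.

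\textbf{Inversion.} Two problems. First, $T_N^{cov}(1)$ is \emph{not} $S_N+O(e^{-c'N})$: by Definition~\ref{def:Anal-Toep} the kernel of $T_N^{cov}(1)$ is $N^d\Psi^N(x,y)$, whereas $S_N=T_N^{cov}(a)$ for the nontrivial analytic symbol $a$ of Theorem~\ref{thr:Szeg-gen}. The $\sharp$-identity is $a$, not $1$, so solving $f\sharp g=1$ term by term does not produce an operator inverse modulo $S_N$. Second, you invoke Theorem~\ref{thr:Szeg-gen} to finish, but in the paper the logic runs the other way: Proposition~\ref{prop:cov-inv} first solves $T_N^{cov}(f)T_N^{cov}(g)=T_N^{cov}(h)+O(e^{-cN})$ for arbitrary $h$, then one takes $h=f$ and uses a separate boundedness argument (Proposition~\ref{prop:pre-inv}) to \emph{deduce} $T_N^{cov}(a)=S_N+O(e^{-cN})$. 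Only after $a$ is known does one set $h=a$ to obtain $f^{\sharp-1}$. Your ordering is circular.
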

  The stationary phase lemma allows one to prove relatively easily
  that the product $(f,g)\mapsto f\sharp g$ is continuous from
  $S^{r,R}_m\times S^{r,R}_m$ to $S^{Cr,CR}_{Cm}$ for some
  $C>0$. Theorem \ref{thr:Compo} is stronger in that respect, since
  the analytic class for $f\sharp g$ is the same as the one for $g$ if
  $f$ is in a significantly better class. We conjecture that, as it is
  the case for pseudodifferential operators on $\R^d$ \cite{boutet_de_monvel_pseudo-differential_1967}, the
  $\sharp$ product is a Banach algebra product in some analytic space,
  that is, is actually continuous from $S^{r,R}_m\times S^{r,R}_m$ to
  $S^{r,R}_m$. This kind of results is subtler than the general
  techniques of analytic microlocal analysis originating from
  \cite{sjostrand_singularites_1982} allow for, and cannot be reached
  from equivalence of analytic quantizations, for instance. Theorem
  \ref{thr:Compo} relies on the Wick property of Toeplitz covariant
  quantization (Proposition \ref{prop:bound-nb-deriv}): as in the
  Moyal product of pseudodifferential operators, to compute the $k$-th term in
  $f\sharp g$ one differentiates $f$ or $g$ at most $k$ times.

  The fact that $f\sharp g$ belongs to the same analytic class as $g$
  in Theorem \ref{thr:Compo} is a key point in our proof of Theorem \ref{thr:Szeg-gen}.

As an application of composition and inversion properties, one can
study the concentration rate of eigenfunctions, in the general case
(exponential decay in the forbidden region) as well as in the particular
case where the principal symbol has a non-degenerate minimum.

\begin{theorem}
  \label{thr:Gen-exp-decay}
Let $M$ be a quantizable compact real-analytic K\"ahler manifold.
   Let $f$ be a real-analytic, real-valued function on
  $M$ and $E\in \R$. Let $(u_N)_{N\geq 1}$ be a normalized sequence of
  $(\lambda_N)_{N\geq 1}$-eigenstates of $T_N(f)$ with
  $\lambda_N\stackrel[N\to +\infty]{}{\to}E$. Then, for every open set $V$ at positive
  distance from $\{f=E\}$ there exist positive constants $c,C$ such that,
  for every $N\geq 1$, one has
  \[
    \int_V\|u_N(x)\|_h^2\frac{\omega^{\wedge n}}{n!}(dx)\leq Ce^{-cN}.
  \]
\end{theorem}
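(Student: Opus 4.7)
The plan is to exploit the exponentially accurate composition calculus of Theorem \ref{thr:Compo} to evaluate, for $k$ of order $N$, the high-power quadratic form $\langle u_N, A^{2k} u_N\rangle = \|A^{k}u_N\|^{2} = 0$ (where $A = T_N(f)-\lambda_N I$ is self-adjoint since $f$ is real), and to convert this vanishing---together with a spectral lower bound on the Berezin symbol of $A^{2k}$---into exponential decay of $u_N$ on $V$.

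First I would use the analytic calculus of Section \ref{sec:calc-analyt-symb} to write $A = T_N^{cov}(g) + O(e^{-c_{1}N})$, with $g$ an analytic symbol of principal symbol $f - \lambda_{N}$. By the class-preservation statement of Theorem \ref{thr:Compo} (namely $\sharp : S^{r,R}_m \times S^{2r,2R}_m \to S^{2r,2R}_m$), the iterate $g^{\sharp 2k}$ stays in the fixed class $S^{2r,2R}_m$ for $k \leq c_{0}N$, and accumulating $O(e^{-cN})$ composition errors yields $A^{2k} = T_N^{cov}(g^{\sharp 2k}) + O(e^{-c_{2}N})$ uniformly in $k \leq c_{0}N$. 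Testing against $u_N$, the LHS vanishes exactly, so $\langle u_N, T_N^{cov}(g^{\sharp 2k}) u_N\rangle = O(e^{-c_{2}N})$; an exponentially accurate stationary-phase expansion of $\Psi^{N}$ (enabled by Theorem \ref{thr:Szeg-gen}) then gives
\[
  \int_{M} (g^{\sharp 2k})(x,x)\, \|u_N(x)\|_{h}^{2}\, \tfrac{\omega^{\wedge n}}{n!}(dx) = O(e^{-c_{3}N}).
\]

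Second, for the pointwise lower bound on $(g^{\sharp 2k})(x,x)$: the identity $Ae_x - (f(x) - \lambda_N)e_x = S_N((f-f(x))e_x)$, with $e_x = S_N(\cdot, x)$, gives $\|Ae_x - (f(x)-\lambda_N) e_x\|_{L^2} = O(N^{-1/2})$. Combined with the spectral decomposition $A = \sum_j \lambda_j P_j$ and $\|e_x\|^2 = S_N(x,x)\sim N^d$, this forces $e_x$ to have a $(1 - O(N^{-1-d}))$-fraction of its $L^2$-mass on spectral projectors with $|\lambda_j| \geq \tfrac12 |f(x)-\lambda_N|$. Hence the Berezin symbol satisfies $\sigma^{v}(A^{2k})(x) = \|A^k e_x\|^2/\|e_x\|^2 \geq \tfrac12 (|f(x) - \lambda_N|/2)^{2k}$ for $N$ large, and by the covariant-to-Berezin correspondence (exponentially accurate on analytic symbols) $(g^{\sharp 2k})(x,x) \geq \tfrac14 (|f(x)-\lambda_N|/2)^{2k}$ pointwise on $M$ for $N$ large.

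Setting $\delta = \tfrac12 \dist(V, \{f = E\}) > 0$, for $N$ large $(f - \lambda_{N})^{2k} \geq \delta^{2k}$ on $V$, so $(g^{\sharp 2k})(x,x) \geq (\delta/2)^{2k}/4$ on $V$. Combining with the global lower bound $(g^{\sharp 2k})(x,x) \geq -O(e^{-c_4N})$ and the integral estimate:
\[
  \frac{(\delta/2)^{2k}}{4} \int_{V} \|u_N\|_{h}^{2}\, \tfrac{\omega^{\wedge n}}{n!} \leq O(e^{-c_{3}N}),
\]
so $\int_{V} \|u_N\|_{h}^{2} \tfrac{\omega^{\wedge n}}{n!} = O(e^{2k\log(2/\delta) - c_3 N})$. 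Choosing $k = \lfloor c_{0} N\rfloor$ with $c_{0}$ small enough that $2 c_{0}\log(2/\delta) < c_3$ yields the claimed exponential decay with constant $c = c_3 - 2c_0 \log(2/\delta) > 0$. The main obstacle is the preservation of a fixed analytic symbol class through $2c_0 N$ iterated compositions---precisely the refinement over generic analytic microlocal analysis provided by the Wick property (Proposition \ref{prop:bound-nb-deriv}) and Theorem \ref{thr:Compo}---without which the accumulated composition error would inflate faster than the unavoidable $\delta^{-2k}$ factor can suppress.
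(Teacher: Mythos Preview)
Your approach is quite different from the paper's and contains a genuine gap. The paper argues by constructing a \emph{microlocal inverse}: it fixes a smooth cutoff $a$ with $a\equiv 1$ on $V$ and $\operatorname{supp}(a)\subset\subset\{f\neq E\}$, then on a neighbourhood $W\supset\operatorname{supp}(a)$ uses the inversion part of Theorem~\ref{thr:Compo} (Proposition~\ref{prop:cov-inv}) to build an analytic symbol $g$ with $T_N^{cov}(g)T_N(f-\lambda_N)=S_N+O(e^{-cN})$ microlocally on $W$. One then checks $T_N(a)T_N^{cov}(g)T_N(f-\lambda_N)=T_N(a)+O(e^{-cN})$ and applies this to $u_N$, obtaining $T_N(a)u_N=O(e^{-cN})$; the conclusion follows from the \emph{exact} identity $\langle u_N,T_N(a)u_N\rangle=\int_M a\,|u_N|^2$. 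Only one composition is needed, and the link to the pointwise density of $u_N$ is purely algebraic.

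The gap in your argument is exactly at that link. You pass from
\[
\langle u_N,T_N^{cov}(g^{\sharp 2k})u_N\rangle=O(e^{-c_2N})
\quad\text{to}\quad
\int_M (g^{\sharp 2k})(x,x)\,\|u_N(x)\|_h^2\,\tfrac{\omega^{\wedge n}}{n!}=O(e^{-c_3N})
\]
by invoking a ``stationary-phase expansion of $\Psi^N$'', but no such step is available: the identity $\langle u,Bu\rangle=\int_M\sigma\,|u|^2$ holds when $\sigma$ is the \emph{contravariant} symbol of $B$ (i.e.\ $B=S_N\sigma S_N$), not when $\sigma$ is the diagonal restriction of the covariant symbol. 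The two differ by a Berezin transform, whose analytic inversion is not established in the paper. One cannot do stationary phase in $y$ in $\int\Psi^N(x,y)h(x,y)\overline{u_N(x)}u_N(y)$ because $u_N$ is not a symbol.

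Even granting an analytic covariant-to-contravariant map, your positivity argument would still fail on $M\setminus V$: the inequality $A^{2k}\geq 0$ gives nonnegativity of the \emph{Berezin} (lower) symbol $\sigma^v(A^{2k})(x)=\|A^ke_x\|^2/\|e_x\|^2$, not of the contravariant (upper) symbol, so there is no reason for the would-be $\tau_k$ to satisfy $\tau_k\geq -O(e^{-c_4N})$ outside $V$. Your coherent-state computation bounds $\sigma^v(A^{2k})$ from below, but there is no identity tying $\int\sigma^v(B)|u_N|^2$ to $\langle u_N,Bu_N\rangle$. In short, the covariant symbol on the diagonal, the Berezin symbol, and the contravariant symbol are three different objects, and your argument slides between them without justification. (A minor additional point: $\delta=\tfrac12\operatorname{dist}(V,\{f=E\})$ is a distance in $M$, not a lower bound on $|f-E|$; you need instead $\delta=\inf_{\overline V}|f-E|>0$.)
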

We say informally that, in the forbidden region $\{f\neq E\}$, the
sequence $(u_N)_{N\geq 1}$ has an exponential decay rate.
\subsection{Exponential estimates in semiclassical analysis}
\label{sec:expon-estim-semicl}

Exact or approximate eigenstates of quantum Hamiltonians are
often searched for in the form of a Wentzel-Kramers-Brillouin (WKB) ansatz:
\[
  e^{\frac{\phi(x)}{\hbar}}(a_0(x)+\hbar a_1(x)+\hbar^2
  a_2(x)+\ldots),
\]
where $\hbar$ is the Planck constant, and is very small at the
observer's scale. In the formula above,
$\Re(\phi)\leq 0$ so that this expression is extremely small outside the
set $\{\Re(\phi)=0\}$ where it concentrates.

From this intuition, an interest developed towards decay rates for
solutions of PDEs with small parameters. The most used setting
in the mathematical treatment of quantum
mechanics is the Weyl calculus of pseudodifferential operators \cite{zworski_semiclassical_2012}. 
Typical decay rates in this setting are of order
$O(\hbar^{\infty})$. Indeed, the composition of two pseudodifferential
operators (or, more generally, Fourier Integral Operators) associated
with smooth symbols can only be expanded in powers of $\hbar$
up to an error $O(\hbar^{\infty})$.

In the particular case of a Schr\"odinger operator $P_{\hbar}=-\hbar^2\Delta+V$ where $V$ is a smooth function, one can obtain an \emph{Agmon estimate}
\cite{helffer_multiple_1984}, which is an
$O(e^{\frac{\phi(x)}{\hbar}})$ pointwise control of eigenfunctions of
$P_{\hbar}$ with eigenvalues
close to $E$. Here, $\phi<0$ on $\{V>E\}$. In this setting one can easily conjugate $P_{\hbar}$ with multiplication
operators of the form $e^{-\frac{\phi}{\hbar}}$, which allows to prove
the control above. This conjugation property is not true for
more general pseudodifferential operators. Moreover, Agmon
estimates yield exponential decay in space variables, and give no information about the concentration rate of the
semiclassical Fourier transform, which is only known to decay at
$O(\hbar^{\infty})$ speed outside zero.

In the setting of pseudodifferential operators on $\R^{d}$ with
\emph{real-analytic} symbols, following analytic microlocal techniques
\cite{sjostrand_singularites_1982}, exponential
decay rates in phase space (that is, exponential decay of the FBI or Bargmann
transform) were obtained in \cite{martinez_estimations_1992,martinez_estimates_1994,martinez_precise_1994,martinez_microlocal_1999}.
Exponential estimates in semiclassical analysis have
important applications in physics \cite{chudnovsky_quantum_1988} where they validate the
WKB ansatz which, in turn, yields precise results on spectral gaps or
dynamics of quantum states (quantum tunnelling). Moreover, on the mathematical level, these
techniques can be used to study non-self-adjoint perturbations \cite{hitrik_non-selfadjoint_2004,hitrik_rational_2008} and
resonances
\cite{helffer_resonances_1986,sjostrand_geometric_1990,melin_bohr-sommerfeld_2001,sjostrand_resonances_2003,faure_prequantum_2006}. 

Since exponential decay in phase space for pseudodifferential
operators is defined by means of the FBI or Bargmann transform, it seems natural
to formulate these questions in terms of Bargmann quantization, which then
generalises to Berezin-Toeplitz quantization on K\"ahler manifolds,
where the semiclassical parameter is the inverse of an integer: $\hbar=N^{-1}$. Yet, for instance, the
validity of the WKB ansatz for a Toeplitz operator, at the bottom of a non-degenerate real-analytic
well, was
only performed when the underlying manifold is $\C$ 
(see \cite{voros_wentzel-kramers-brillouin_1989}), and some results were
recently obtained for non-self-adjoint perturbations of Toeplitz
operators on complex one-dimensional tori
\cite{rouby_bohrsommerfeld_2017}. 

The analysis of Toeplitz operators depends on the knowledge of the
Bergman projector, which encodes the geometrical data of the
manifold on which the quantization takes place. The original microlocal techniques for the
study of this projector
\cite{boutet_de_monvel_sur_1975,zelditch_szego_2000,charles_berezin-toeplitz_2003}
allow for a direct control of the Bergman
kernel up to $O(N^{-\infty})$, from which one can deduce
$O(N^{-\infty})$ estimates for composition and eigenpairs of Toeplitz
operators with smooth symbols \cite{le_floch_theorie_2014,deleporte_low-energy_2016,deleporte_low-energy_2017}. Based on analytic
pseudodifferential techniques, local Bergman
  kernels make it possible to show, under real-analyticity
hypothesis, exponential (that is, $O(e^{-cN})$) decay of the coherent
states in Toeplitz quantization \cite{berman_direct_2008}.

There is a recent increase of activity in the topic of exponential
estimates in Toeplitz quantization: control of the Bergman kernel in
real-analytic or Gevrey regularity
\cite{hezari_off-diagonal_2017,hezari_quantitative_2018,hezari_property_2019,charles_analytic_2019}, but also estimates for the localisation of eigenfunctions of
the form $O(e^{-cN^{\alpha}})$ for $C^{\infty}$ or rougher symbols \cite{zelditch_interface_2016,charles_entanglement_2018,kordyukov_semiclassical_2018,deleporte_fractional_2020}.

\begin{rem}[Gevrey case]
 The methods and symbol classes developed in this paper can
 be easily applied to the Gevrey case. $s$-Gevrey symbol classes are
 defined, and studied, by putting all factorials to the power
 $s>1$. $s$-Gevrey functions have almost holomorphic extensions with
 controlled error near the real locus, so that all results in this
 paper should be valid in the Gevrey case under the two following
 modifications:
 \begin{itemize}
 \item The summation of $s$-Gevrey symbols is performed up to
   $k=cN^{\frac 1s}$.
 \item All $O(e^{-c'N})$ controls are replaced with $O(e^{-c'N^{\frac 1s}})$.
 \end{itemize}
 For instance, we conjecture that the Bergman kernel on a quantizable compact Gevrey
 K\"ahler manifold is known up to $O(e^{-c'N^{\frac 1s}})$. Its kernel
 decays at speed $N^{\dim(M)}e^{-\left(\frac 12 -\varepsilon\right)N\dist(x,y)^2}$ as long
 as $\dist(x,y)\leq cN^{-\frac{s-1}{2s}}$. This would improve recent results \cite{hezari_quantitative_2018}.
\end{rem}

\subsection{Outline}
\label{sec:outline}

In Section \ref{sec:holom-extens} we recall the basic properties of
holomorphic extensions of real-analytic functions. Then, in
Section \ref{sec:calc-analyt-symb}, we define analytic symbol classes
for sequences of functions $(f_k)_{k\geq 0}$ and we give a meaning
to the sum $\sum N^{-k}f_k$ up to exponential precision. These symbol
classes are more precise than the ones appearing in the literature
since 
\cite{sjostrand_singularites_1982}. In Section
\ref{sec:szego-kernel-general} we show Theorems \ref{thr:Szeg-gen} and
\ref{thr:Compo}: the Bergman kernel on a
compact quantizable real-analytic K\"ahler manifold, and the composition of
analytic covariant
Toeplitz operators, are known up to
$O(e^{-cN})$ precision, in terms of analytic symbols, from which we
deduce, in Subsection \ref{sec:expon-decay-low}, general exponential decay
(Theorem \ref{thr:Gen-exp-decay}) in the forbidden region, for
covariant as well as contravariant Toeplitz operators with analytic symbols. 

In Sections \ref{sec:calc-analyt-symb} and those that follow, the fundamental tool is a version in real-analytic
regularity of the stationary phase lemma (Lemma \ref{prop:HSPL}).
The various proofs in the second part have a common
denominator: the general strategy consists in applying the complex
stationary phase lemma and controlling the growth of the derivatives
of the successive terms.


\section{Holomorphic extensions}
\label{sec:holom-extens}

In this section we provide a general formalism for holomorphic
extensions of various real-analytic data, which we use throughout this
paper. The constructions of holomorphic extensions of real-analytic
functions and manifolds is somewhat standard. We refer to
\cite{whitney_quelques_1959} for details on these constructions. In
particular, we study in Subsection \ref{sec:norms-analyt-funct} a
specific class of analytic function spaces, which is a prerequisite to
the Definition \ref{def:anal-symb} of analytic symbol classes.

\subsection{Combinatorial notations and inequalities}
\label{sec:comb-notat}
In this subsection we recall some basic combinatorial notation. Analytic
functions and analytic symbol spaces are defined using sequences which
grow as fast as a factorial (see Definitions \ref{def:Anal-fct-space} and
\ref{def:anal-symb}) so that we will frequently need to bound
expressions involving
binomial or multinomial coefficients.
\begin{defn}
  Let $0\leq i \leq j$ be integers. The associated \emph{binomial} coefficient is
  \[
    \binom{j}{i}=\cfrac{j!}{i!(j-i)!}.
  \]

  Let more generally $(i_k)_{1\leq k \leq n}$ be a family of
  non-negative integers and let $j\geq \sum_{k=1}^ni_k$. The
  associated \emph{multinomial} coefficient is 
  \[
    \binom{j}{i_1,\ldots,i_k}=\cfrac{j!}{(j-\sum_{k=1}^n
      i_k)!\prod_{k=1}^ni_k!}
    \]
  \end{defn}
  \begin{rem}
    An alternative definition of
    multinomial coefficient assumes $j=i_1+\ldots+i_n$, in which case
    one defines $\binom{j}{i_1,\ldots,
      i_n}=\cfrac{j!}{i_1!\ldots i_n!}$. The definition we give
    contains this one, and is more consistent with the notation
    for binomial coefficients.
  \end{rem}

  \begin{defn}\fixitemthm
    \begin{enumerate}
      \item
    A \emph{polyindex} (plural: polyindices) $\mu$ is an ordered family
    $(\mu_1,\ldots, \mu_d)$ of non-negative integers (the set of
    non-negative integers is denoted by $\N_0$). The cardinal $d$
    of the family is called the \emph{dimension} of the polyindex (we will
    only consider the case where $d$ is finite).

  \item The norm $|\mu|$ of the polyindex $\mu=(\mu_1,\ldots,\mu_d)$ is
    defined as $\sum_{i=1}^d\mu_i$.

    \item The partial order $\leq$ on polyindices of same dimension
    is defined as follows: $\nu\leq \mu$ when, for every $1\leq i \leq
    d$, one has $\nu_i\leq \mu_i$.
    
    \item The factorial $\mu!$ is defined as
    $\prod_{i=1}^d\mu_i!$. Together with the partial order, this
    allows to extend the notation for binomial coefficients. If
    $\nu\leq \mu$, then we define the associated binomial coefficient
    as
    \[
      \binom{\mu}{\nu}=\frac{\mu!}{\nu!(\mu-\nu)!}
    \]
  \end{enumerate}
  \end{defn}
We state here a few useful inequalities about binomial coefficients.

\begin{lem}\label{lem:control-poly-binom}
  Let $\nu\leq \mu$ be polyindices. Then \[\binom{\mu}{\nu}\leq \binom{|\mu|}{|\nu|}.\]
\end{lem}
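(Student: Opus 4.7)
The plan is to give a short combinatorial argument by injection. Unwinding the definitions, one has $\binom{\mu}{\nu}=\prod_{i=1}^d\binom{\mu_i}{\nu_i}$, so the inequality to prove is
\[
\prod_{i=1}^d\binom{\mu_i}{\nu_i}\leq \binom{|\mu|}{|\nu|}.
\]
The right-hand side counts subsets of size $|\nu|$ inside a ground set of size $|\mu|$, while the left-hand side counts subsets of that same size which additionally respect a fixed block decomposition, with prescribed intersection sizes with each block.

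To make this precise I would fix pairwise disjoint sets $A_1,\ldots,A_d$ with $|A_i|=\mu_i$ and let $A=\bigsqcup_i A_i$, so $|A|=|\mu|$. The map
\[
(B_1,\ldots,B_d)\longmapsto \bigsqcup_{i=1}^d B_i,
\]
defined on the set of tuples with $B_i\subset A_i$ and $|B_i|=\nu_i$, takes values in the set of subsets of $A$ of cardinality $|\nu|$, and is injective (the $B_i$ are recovered as $B\cap A_i$). Counting both sides yields the inequality.

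No real obstacle is expected: the statement is a purely combinatorial fact. If one prefers an algebraic route, the same conclusion follows by induction on the dimension $d$, using Vandermonde's identity
\[
\binom{|\mu|}{|\nu|}=\sum_{k=0}^{|\nu|}\binom{\mu_d}{k}\binom{|\mu|-\mu_d}{|\nu|-k}
\]
and keeping only the non-negative term $k=\nu_d$, to which one applies the induction hypothesis for the truncated polyindices $(\mu_1,\ldots,\mu_{d-1})$ and $(\nu_1,\ldots,\nu_{d-1})$.
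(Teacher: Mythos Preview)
Your proof is correct; both the injection argument and the Vandermonde induction are valid and standard. Note that the paper states this lemma without proof, so there is nothing to compare against --- your argument fills the gap cleanly.
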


  \begin{lem}\label{lem:binom-multi}
    Let $i \leq j$ and $1\leq k \leq l-1$ be integers. Then
    \[
      \cfrac{(i+k-1)!(j+l-i-k-1)!}{i!k!(j-i)!(l-k)!}\leq
      \cfrac{(j+l-2)!}{j!(l-1)!}.
    \]
    In particular, if $a_1,\ldots, a_n$ are nonnegative integers and
    $b_1,\ldots, b_n$ are positive integers, with $\sum_{i=1}^na_i=j$
    and $\sum_{i=1}^nb_i=l$, then
    \[
      \cfrac{(a_1+b_1-1)!\ldots (a_n+b_n-1)!}{a_1!b_1!\ldots
        a_n!b_n!}\leq \cfrac{(j+l-n)!}{j!(l-n+1)!}.
      \]
    \end{lem}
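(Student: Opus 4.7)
The plan is to derive the single-index inequality first and then obtain the multi-index version by induction on $n$.

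For the single-index inequality, I would multiply both sides by $\frac{j!(l-1)!}{(j+l-2)!}$. Regrouping the eight factorials into binomial coefficients (using that $(i+k-1)+(j+l-i-k-1)=j+l-2$ and that $\frac{(l-1)!}{k!(l-k)!} = \frac{1}{k}\binom{l-1}{k-1} = \frac{l-1}{k(l-k)}\binom{l-2}{k-1}$), the desired inequality becomes equivalent to
$$(l-1)\binom{j}{i}\binom{l-2}{k-1} \leq k(l-k)\binom{j+l-2}{i+k-1}.$$
This follows from two classical ingredients. First, Vandermonde's identity
$$\binom{j+l-2}{i+k-1} = \sum_{s=0}^{i+k-1}\binom{j}{s}\binom{l-2}{i+k-1-s}$$
exhibits $\binom{j}{i}\binom{l-2}{k-1}$ as one of the non-negative summands (the hypotheses $0\leq i\leq j$ and $1\leq k\leq l-1$ guarantee that this term is well-defined), hence $\binom{j}{i}\binom{l-2}{k-1}\leq\binom{j+l-2}{i+k-1}$. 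Second, the elementary identity $k(l-k)-(l-1) = (k-1)(l-k-1)$ is non-negative for $1\leq k\leq l-1$. Combining these two observations yields the single-index inequality.

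For the multi-index inequality, I would argue by induction on $n$, the case $n=1$ being a trivial equality. For the induction step, set $j' = j-a_n$ and $l' = l-b_n$; the induction hypothesis applied to the first $n-1$ variables (admissible because $\sum_{i<n}b_i\geq n-1$, so $l'\geq n-1$) yields
$$\prod_{i=1}^{n-1}\frac{(a_i+b_i-1)!}{a_i!\,b_i!} \leq \frac{(j'+l'-(n-1))!}{j'!\,(l'-n+2)!}.$$
Multiplying by $\frac{(a_n+b_n-1)!}{a_n!\,b_n!}$, the inequality that remains is exactly the single-index statement with parameter substitution $(i,j,k,l)\leftarrow (a_n, j, b_n, l-n+2)$; the required constraint $1\leq b_n\leq l-n+1$ is inherited from $b_n\geq 1$ and $\sum_{i\neq n}b_i\geq n-1$.

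The main potential obstacle is algebraic bookkeeping: one must keep track of how the effective parameter $l$ shrinks to $l-n+2$ when invoking the single-index lemma at stage $n$, and verify that all binomial coefficients and factorials remain well-defined. Beyond that the argument is essentially a one-line observation — the content of the lemma is the combination of the Vandermonde identity with the quadratic inequality $k(l-k)\geq l-1$.
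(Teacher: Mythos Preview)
Your proof is correct and follows essentially the same route as the paper's. Both arguments factor the left-hand side as $\frac{1}{k(l-k)}$ times a product of two binomial coefficients, bound that product by a single binomial (you invoke Vandermonde explicitly, the paper invokes its Lemma~\ref{lem:control-poly-binom}, which is the same inequality), and use $k(l-k)\geq l-1$; the multi-index version is then obtained by the same induction, only organised in the opposite direction (the paper merges the last two indices and recurses, you apply the induction hypothesis to the first $n-1$ and then the single-index bound).
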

    \begin{proof}
      For the first part, let $k'=k-1$, then
      \[
        \cfrac{(i+k-1)!(j+l-i-k-1)!}{i!k!(j-i)!(l-k)!}=\frac{1}{k(l-k)}\binom{i+k'}{i}\binom{j+l-2-i-k'}{j-i}.
      \]
      Since $1\leq k \leq l-1$ there holds $\frac{1}{k(l-k)}\leq
      \frac{1}{l-1}$. Moreover, from Lemma
      \ref{lem:control-poly-binom}, one has
      \[
        \binom{i+k'}{i}\binom{j+l-2-i-k'}{j-i}\leq
        \binom{j+l-2}{j}=\cfrac{(j+l-1)!}{j!(l-2)!}.
      \]
      Hence,
      \[
        \cfrac{(i+k-1)!(j+l-i-k-1)!}{i!k!(j-i)!(l-k)!}\leq
        \cfrac{(j+l-2)}{j!(l-1)!}.
      \]

      The second part is deduced from the first part by
      induction. Indeed, we just proved that, denoting
      $a_{n-1}'=a_{n-1}+a_n$ and $b_{n-1}'=b_{n-1}+b_n-1$, one has
      \begin{multline*}
         \cfrac{(a_1+b_1-1)!\ldots (a_n+b_n-1)!}{a_1!b_1!\ldots
        a_n!b_n!}\\ \leq  \cfrac{(a_1+b_1-1)!\ldots (a_{n-2}+b_{n-2}-1)!(a_{n-1}'+b_{n-1}'-1)!}{a_1!b_1!\ldots
        a_{n-2}!b_{n-2}!a_{n-1}'!b_{n-1}'!}.
    \end{multline*}
    Here, the sum of the $a_i$'s has not changed but the sum of the
    $b_i$'s has been reduced by one. By induction,
    \[
      \cfrac{(a_1+b_1-1)!\ldots (a_n+b_n-1)!}{a_1!b_1!\ldots
        a_n!b_n!}\leq \cfrac{(j+l-n)!}{j!(l-n+1)!}.
      \]
    \end{proof}

    \begin{lem}\label{lem:hull}
      Let $\ell\geq 2$ and $n\geq 2$ be integers. The set
      \[
    \left\{(i_1,\ldots,i_n)\in \N_0^n,\sum_{k=1}^ni_k=\ell,\text{ at least
        two of them are $\geq 1$}\right\}.\]
  is contained in the convex hull of all permutations of $(\ell-1,1,0,\ldots,0)$.
\end{lem}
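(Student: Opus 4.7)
The plan is to identify the convex hull of all permutations of $v = (\ell-1, 1, 0, \ldots, 0)$ with the simpler polytope
\[
  P = \left\{x \in \R_{\geq 0}^n : \sum_{k=1}^n x_k = \ell \text{ and } x_k \leq \ell - 1 \text{ for every } k\right\},
\]
and then to observe that the set in the lemma is contained in $P$.

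For the containment in $P$, a nonnegative integer vector $(i_1, \ldots, i_n)$ with $\sum_k i_k = \ell$ and at least two entries $\geq 1$ automatically satisfies $\max_k i_k \leq \ell - 1$: if some coordinate were $\geq \ell$, it would already carry the entire mass $\ell$ and force all other entries to vanish, contradicting the hypothesis.

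For the identification of $P$ with the permutation hull, the inclusion $\mathrm{conv}\{\sigma \cdot v : \sigma \in S_n\} \subseteq P$ is immediate since $v \in P$ and $P$ is convex. The reverse inclusion follows from the Krein--Milman theorem once I check that the extreme points of $P$ are exactly the permutations of $v$. Since $P$ lies in the $(n-1)$-dimensional affine hyperplane $\sum_k x_k = \ell$, an extreme point must saturate at least $n - 1$ of the $2n$ box inequalities $0 \leq x_k \leq \ell - 1$, so at most one coordinate is strictly interior to $(0, \ell - 1)$. Writing $m$ for the number of coordinates equal to $\ell - 1$ and $x_\ast \in [0, \ell - 1]$ for the (at most one) interior coordinate, the constraint $m(\ell - 1) + x_\ast = \ell$ leaves only two admissible configurations: $m = 1$ with $x_\ast = 1$, or (when $\ell = 2$) $m = 2$ with $x_\ast = 0$. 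Both yield permutations of $v$.

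The main step is the extreme-point identification of $P$, which reduces to the short case analysis just described; I do not foresee a real obstacle. An alternative route that bypasses the polytope computation is to invoke the Hardy--Littlewood--Polya / Rado characterization of permutohedra via majorization: one need only verify $w \prec v$, which follows from $\max_k i_k \leq \ell - 1 = v_{[1]}$ and, for $K \geq 2$, from the trivial bound $\sum_{k \leq K} w_{[k]} \leq \ell = v_{[1]} + v_{[2]}$.
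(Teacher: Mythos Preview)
Your argument is correct, and it takes a genuinely different route from the paper's own proof. The paper argues by induction on the support size $k$ of the tuple: starting from the base case $k=2$, it explicitly constructs the point $(\ell-k+1,1,\ldots,1,0,\ldots,0)$ as a concrete convex combination of points already known to lie in $S$, and then observes that the permutations of this point are the extreme integer tuples of support $k$. Your proof instead identifies the full permutohedron in one stroke as the slice $P=\{x\in\R_{\ge 0}^n:\sum x_k=\ell,\ x_k\le \ell-1\}$, verifies this via an extreme-point computation (or alternatively via majorization and Rado's theorem), and reads off the containment immediately from $\max_k i_k\le \ell-1$.

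The trade-off is the expected one: the paper's proof is entirely elementary and self-contained, using nothing beyond explicit averaging, at the cost of a slightly fiddly induction. Your approach is shorter and more conceptual, but imports either a vertex-enumeration argument for a box-slice polytope or the Hardy--Littlewood--P\'olya/Rado characterisation of permutohedra. Both are standard, so this is a clean simplification; the majorization version in particular reduces the whole lemma to the single observation that $w_{[1]}\le \ell-1$ and $\sum_{k\le K}w_{[k]}\le \ell$ for $K\ge 2$.
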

\begin{proof}
  Let us call \emph{support} of a tuple $(i_1,\ldots,i_n)$ the number
  of its elements which are non-zero. We will prove by induction on $2\leq k\leq \min(n,\ell)$
  that the convex hull $S$ of the permutations of $(\ell-1,1,0,\ldots,0)$
  contain all tuples of
  support $k$ such that the sum of all elements is $\ell$.

  For $k=2$, we can indeed recover all elements of the form
  $(\ell-x,x,0,\ldots,0)$ for all $1\leq x\leq \ell-1$ by a convex combination of
  $(\ell-1,1,0,\ldots,0)$ and $(1,\ell-1,0,\ldots,0)$.

  We now proceed to the induction. 
  Suppose that $S$ contains all elements of the form
  $(i_1,\ldots,i_{k-1},0,\ldots,0)$ and their permutations. Then, in
  particular, it contains
  $a_0=(\ell-k+2,1,\ldots,1,0,\ldots, 0)$. For every $1\leq j\leq
  k-2$, $S$ also contains the image of $a_0$ by the
  transposition $(k,k-j)$, which we denote by $a_j$. Moreover, $S$ contains $(\frac{\ell}{k-1},\ldots,\frac{\ell}{k-1},0,\ldots,0)$ and its
permutations.
  From the $(a_j)_{0\leq j\leq k-2}$ and
  $(\frac{\ell}{k-1},\ldots,\frac{\ell}{k-1},0,\ldots,0)$, one can
  form the convex combination
  \begin{multline*}
    \cfrac{\ell-k+1}{(\ell-k+2)(k-2)}\sum_{j=0}^{k-2}a_j+\frac{1}{\ell-k+2}\,\left(0,\frac{\ell}{k-1},\ldots,\frac{\ell}{k-1},0,\ldots,0\right)\\=(\ell-k+1,\underbrace{1,\ldots,1}_{k-1},0,\ldots,0).
  \end{multline*}
  
  In particular, $S$ contains all permutations of
  $(l-k+1,1,\ldots,1,0,\ldots,0)$. Thus, $S$ contains all
  elements of support $k$, since the $k$-uple $(l-k,0,\ldots,0)$ and its
  permutations are the extremal points of the convex
  $\{\sum_{j=1}^ki_j=\ell-k\})$. This concludes the induction.
\end{proof}

\subsection{Extensions of real-analytic functions}
\label{sec:extens-analyt-funct}

The fundamental object that one is allowed to extend in a holomorphic
way is a real-analytic function.

\begin{defn}\label{def:Ext-anal-fct}
  Let $f:U\to E$ be a real-analytic function on an open set $U\in
  \R^n$, which takes values into a real or complex Banach space $E$. A \emph{holomorphic extension} of $f$ is a couple
  $(\widetilde{f}, \widetilde{U})$, where $\widetilde{U}$
  is an open set of $\C^n$ and $\widetilde{f}:\widetilde{U}\mapsto
  E\otimes \C$,
  such that
  \begin{itemize}
  \item $\overline{\partial}\widetilde{f}=0.$
  \item $U\subset \widetilde{U}$,
  \item $\widetilde{f}|_{U}=f$
  \end{itemize}
\end{defn}
Naturally, two holomorphic extensions coincide on the connected
components of their intersections which intersect $U$ since, on a
connected open set of $\C^d$, a holomorphic function which vanishes on a
real set vanishes everywhere.

If $E$ is a real Banach space then $E\otimes \C$ is the
complexification of $E$; if $E$ is complex to begin with then
$E\otimes \C=E$.

The local expression of a real-analytic function as a convergent power
series gives a natural and non-ambiguous way to define a holomorphic
extension.

\subsection{Extensions of manifolds}
\label{sec:extensions-manifolds}

\begin{prop}[\cite{whitney_quelques_1959}]
  Let $M$ be a real-analytic manifold. There is a complex manifold
  $(\widetilde{M},J_e)$ with boundary, such that $M$ is a totally real
  submanifold of $\widetilde{M}$. Then $\widetilde{M}$ is called a
  \emph{holomorphic extension} of $M$.
\end{prop}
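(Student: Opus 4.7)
The plan is to build $\widetilde{M}$ by gluing together local holomorphic extensions of charts of $M$. The existence of these local extensions is granted by Definition \ref{def:Ext-anal-fct}; the work lies in controlling the domains of extension so that the gluing is consistent.

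First, I would fix a countable, locally finite real-analytic atlas $(U_i,\varphi_i)_{i\in I}$ of $M$, with $\varphi_i:U_i\to V_i\subset\R^n$. For any pair $i,j$ with $U_i\cap U_j\neq\emptyset$, the transition map $\varphi_{ij}=\varphi_i\circ\varphi_j^{-1}:\varphi_j(U_i\cap U_j)\to\varphi_i(U_i\cap U_j)$ is real-analytic, hence admits a holomorphic extension $\widetilde{\varphi_{ij}}$ defined on some open set $\widetilde{W}_{ij}\subset\C^n$ containing $\varphi_j(U_i\cap U_j)$, with values in $\C^n$. On a sufficiently small neighborhood of the real locus this extension is a biholomorphism onto its image, because $\varphi_{ij}$ is itself a diffeomorphism and the complex differential at a real point coincides with the real differential tensored with $\C$.

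Second, I would choose, for each $i$, a tubular neighborhood $\widetilde{V}_i\subset\C^n$ of $V_i$ thin enough that the following compatibility holds on triple intersections: whenever $U_i\cap U_j\cap U_k\neq\emptyset$, the identity
\[
\widetilde{\varphi_{ik}}=\widetilde{\varphi_{ij}}\circ\widetilde{\varphi_{jk}}
\]
is valid on the overlap of the relevant extended domains. By uniqueness of holomorphic continuation, this identity automatically holds on the connected component of any such overlap meeting $\R^n$; the issue is only to make the overlaps themselves nonempty and connected. Local finiteness of the atlas reduces this to countably many conditions, each of which can be satisfied by taking $\widetilde{V}_i$ thin enough in the imaginary direction. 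A standard shrinking argument (choose radii $\varepsilon_i>0$ going to zero fast enough along an exhaustion) produces a global choice.

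Third, I would define $\widetilde{M}$ as the quotient of the disjoint union $\bigsqcup_i\widetilde{V}_i$ by the equivalence relation identifying $z\in\widetilde{V}_j$ with $\widetilde{\varphi_{ij}}(z)\in\widetilde{V}_i$ whenever the latter is defined. The cocycle condition from the previous step ensures transitivity of this relation. The charts $\widetilde{V}_i\hookrightarrow\widetilde{M}$ then endow $\widetilde{M}$ with a complex structure $J_e$, since the gluing maps are biholomorphisms between open subsets of $\C^n$. The embedding $M\hookrightarrow\widetilde{M}$ is obtained by gluing the embeddings $\varphi_i:U_i\to V_i\subset\widetilde{V}_i$; inside each chart, $V_i=\R^n\cap\widetilde{V}_i$ is totally real in $\C^n$ since $J_e(T_x\R^n)=iT_x\R^n$ is transverse to $T_x\R^n$, and this property is preserved by biholomorphic changes of charts, so $M$ is totally real in $\widetilde{M}$. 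Finally, the ``with boundary'' statement is realised by taking each $\widetilde{V}_i$ of the form $\{z\in\C^n:|\mathrm{Im}\,z|<\varepsilon_i(\mathrm{Re}\,z)\}$ with a suitable positive smooth function $\varepsilon_i$.

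The main obstacle is the global compatibility of the local extensions, since the radius of holomorphic extension of $\varphi_{ij}$ has no a priori lower bound and may shrink as one travels through $M$. Handling this requires the shrinking procedure of step two and, implicitly, Hausdorffness of the quotient, which follows from choosing the $\widetilde{V}_i$ thin enough that two distinct points of $\widetilde{M}$ admit disjoint representatives. Once this is established the remaining verifications (complex structure, total reality, embedding of $M$) are immediate from the local picture.
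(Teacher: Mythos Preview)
The paper does not supply its own proof of this proposition: it is stated with a citation to Whitney \cite{whitney_quelques_1959}, and the surrounding text explicitly says ``We refer to \cite{whitney_quelques_1959} for details on these constructions.'' So there is no in-paper argument to compare against.

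Your sketch is the standard Bruhat--Whitney construction and is essentially correct as an outline. A couple of points you may wish to tighten if you write it up in full: the Hausdorffness of the quotient is genuinely the delicate step (not merely ``implicit''), and requires that the extended transition maps be biholomorphisms between the chosen thickenings, not just holomorphic maps defined on them; and the ``with boundary'' clause is a convenience in the paper's formulation (one works on a compact piece of the complexification) rather than a structural feature of the construction---your choice of $\{|\mathrm{Im}\,z|<\varepsilon_i(\mathrm{Re}\,z)\}$ gives an open manifold, and obtaining a smooth boundary requires a further choice of level set. None of this affects the substance of your argument.
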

In this setting, ``totally real'' means that
\[
  \forall x\in M, T_xM\cap J_e(T_xM)=\{0\}.
\]

The extension of real-analytic manifolds is naturally associated with an
extension of their real-analytic functions.
\begin{prop}
    Let $f$ be a real-analytic function on a real-analytic manifold $M$. Then there exists a
    holomorphic function $\widetilde{f}$ on a holomorphic extension $\widetilde{M}$
    of $M$ such that
    $\widetilde{f}|_{M}=f$.
\end{prop}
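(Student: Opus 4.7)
The plan is to construct the extension $\widetilde{M}$ and the holomorphic function $\widetilde{f}$ simultaneously by gluing, rather than trying to extend $f$ to the $\widetilde{M}$ already provided by the previous proposition (whose construction itself proceeds by gluing complexified charts, so we may as well piggyback on it).

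First, I would cover $M$ by a locally finite real-analytic atlas $(U_\alpha,\phi_\alpha)$ with $\phi_\alpha:U_\alpha\to V_\alpha\subset\R^n$ and real-analytic transition maps $\phi_{\beta\alpha}=\phi_\beta\circ\phi_\alpha^{-1}$. Since $f$ is real-analytic, each $f_\alpha:=f\circ\phi_\alpha^{-1}$ is real-analytic on $V_\alpha$. By Definition \ref{def:Ext-anal-fct}, applied pointwise via the convergent local Taylor series, $f_\alpha$ admits a holomorphic extension $\widetilde{f}_\alpha$ to some open neighborhood $\widetilde{V}_\alpha\subset\C^n$ of $V_\alpha$; similarly each transition $\phi_{\beta\alpha}$ extends holomorphically to some neighborhood $\widetilde{\phi}_{\beta\alpha}$ of $V_\alpha\cap\phi_\alpha(U_\alpha\cap U_\beta)$.

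Next, I would shrink the neighborhoods $\widetilde{V}_\alpha$ so that, for every $\alpha,\beta$, the connected components of $\widetilde{\phi}_{\alpha\beta}(\widetilde{V}_\beta)\cap\widetilde{V}_\alpha$ that meet the real locus form a well-defined glueing datum. The cocycle identity $\widetilde{\phi}_{\gamma\beta}\circ\widetilde{\phi}_{\beta\alpha}=\widetilde{\phi}_{\gamma\alpha}$ holds on the real locus and hence, by the identity principle for holomorphic maps on the connected components meeting the real locus, it holds wherever it makes sense after shrinking. This produces a complex manifold $\widetilde{M}$ containing $M$ as a totally real submanifold, as in the previous proposition.

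Finally, I would observe that on overlaps the local extensions agree: $\widetilde{f}_\alpha\circ\widetilde{\phi}_{\alpha\beta}$ and $\widetilde{f}_\beta$ are both holomorphic on the relevant connected complex neighborhood and coincide with $f_\beta$ on the real locus $V_\beta\cap\phi_\beta(U_\alpha\cap U_\beta)$; by the identity principle they coincide where both are defined. Hence the $\widetilde{f}_\alpha$ glue into a global holomorphic $\widetilde{f}$ on $\widetilde{M}$ with $\widetilde{f}|_M=f$. The main obstacle is the bookkeeping in step two: one must shrink the complex neighborhoods consistently so that all pairwise intersections used in the glueing stay connected and meet the real locus, in order to legitimately invoke the identity principle both for the cocycle condition of $\widetilde{M}$ and for the coherence of the $\widetilde{f}_\alpha$. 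This is classical and is the content of the construction referenced in \cite{whitney_quelques_1959}.
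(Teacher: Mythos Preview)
The paper states this proposition without proof, treating it as a standard consequence of the construction referenced in \cite{whitney_quelques_1959}. Your sketch is correct and is precisely the classical argument behind that reference: complexify the charts, extend $f$ locally via its convergent Taylor series, and glue using the identity principle on totally real sets. The paper even records the key uniqueness ingredient you rely on, just after Definition~\ref{def:Ext-anal-fct} (``on a connected open set of $\C^d$, a holomorphic function which vanishes on a real set vanishes everywhere''). There is no alternative argument in the paper to compare against; your bookkeeping caveat about shrinking neighbourhoods so that all relevant intersections are connected and meet the real locus is exactly the point that makes the construction work, and is handled in the cited reference.
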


In the body of this article we will frequently extend real-analytic
functions on holomorphic manifolds. We introduce a convenient notation
to this end, which is reminiscent of Definition \ref{def:natural-section}.
Locally, a real-analytic function $f$ on a complex manifold of
dimension $d$ can be written
as
\[
  f:z\mapsto \sum_{\nu,\rho \in \N_0^d}c_{\nu,\rho}z^{\nu}\overline{z}^{\rho}.
\]
As the function $f$ is not holomorphic, we specifically
write $f(z,\overline{z})$.
There is then a natural notion of an extension
\[
  \widetilde{f}:(z,w)\mapsto \sum_{\nu,\rho\in \N_0^d}c_{\nu,\rho}z^{\nu}w^{\rho}.
\]
This function is holomorphic on a neighbourhood of $0$ in
$\C^{2d}$. It coincides with $\widetilde{f}$, but the totally real
manifold of interest is not $\{\Im(z)=0\}$ anymore but rather
$\{(z,w),w=\overline{z}\}$.

Let $M$ be a complex manifold; using the convention above let us treat
local charts for $M$ and its holomorphic extension $\widetilde{M}$.
A change of charts in $M$ is a biholomorphism $\phi$ which, in the
convention above, depends only on $z$ as a function on
$\widetilde{M}$. The extended biholomorphism $\widetilde{\phi}$
constructed in the previous subsection can be written as
\[(z,w)\mapsto (\phi(z),\overline{\phi(\overline{w})}).\]

Gluing open sets along the charts $\overline{\phi}$ (defined by $\overline{\phi}(z)=\overline{\phi(\overline{z})}$) yields a manifold
$\overline{M}$, and there is a natural identification $M\ni z\mapsto
\overline{z}\in \overline{M}$, so that $\overline{M}$ is simply $M$
with reversed complex structure.

The expression of $\widetilde{\phi}$ above yields \[\widetilde{M}=M\times \overline{M},\] and $M$ sits in $\widetilde{M}$ as
the totally real submanifold \[\{(z,w)\in M\times \overline{M},
  \overline{z}=w\}.\] This copy of $M$ is said to be the codiagonal of
$M\times \overline{M}$.

Any real-analytic function on $M$ can be extended as a holomorphic
function in a neighbourhood of the codiagonal of
$\widetilde{M}$. If the function was holomorphic (on a small open set
of $M$) to begin with, then its extension depends only on the first
variable (on a small open set of $M\times \overline{M}$).

\subsection{Analytic functional spaces}
\label{sec:norms-analyt-funct}

In this subsection we derive a few tools about the study of holomorphic
functions near a compact totally real set. We first fix a notion of
convenient open sets on which our analysis can take place.

\begin{defn}
  A \emph{domain} of $\R^d$ is an open, relatively compact set $U$
  with piecewise smooth boundary.
\end{defn}

Recall that a holomorphic function $f$ near zero can be written as
\[
  f(z)=\sum_{\nu\in \N_0^d}\cfrac{f_{\nu}}{\nu!}z^{\nu}.
\]
Then, in particular $f_{\nu}=\partial^{\nu}f(0)$.
Since $f$ is holomorphic, the sum above converges for $|z|$ sufficiently small. In other terms, there exists
$r>0$ and $C>0$ such that, for every $\nu\in \N_0^d$, one has
\[
  |f_{\nu}|\leq C\nu!r^{|\nu|}.
\]

\begin{defn}\label{def:Anal-fct-space}
For $j\in \N_0$ and $f$ a function on a domain of $\R^d$ of class $C^j$,
we denote by $\nabla^jf$ the function
$(\partial^{\alpha}f(x))_{|\alpha|=j}$, which maps $U$ to $\R^{\binom{j+d-1}{d-1}}$. For $n\in \N$ and $v\in
\R^n$, we denote $\|v\|_{\ell^1}=\sum_{j=1}^n|v_1|+\ldots+|v_n|$.

  Let $m\in \N_0$ and $r>0$. Let $U$ be a domain in $\R^d$. The space $H(m,r,U)$ is defined as the set of
  real-analytic functions on $U$ such that there exists a
  constant $C$ satisfying, for every
  $j\in \N_0$,
  \[
    \sup_{x\in U}\|\nabla^jf(x)\|_{\ell^{1}}\leq \cfrac{Cr^{j}j!}{(j+1)^m}.
  \]
  The space $H(m,r,U)$ is a Banach space for the norm
  $\|\cdot\|_{H(m,r,U)}$ defined as the smallest constant $C$ such
  that the inequality above is true for every $j$.
\end{defn}
Such functions can be extended to a neighbourhood of $U$ in $\C^d$,
with imaginary part bounded by $r^{-1}$ (and by the distance to the
boundary of $U$). The spaces $H(m,r,U)$ are
compactly embedded in each other for the lexicographic order on
$(r,-m)$: if either $r<r'$ or $r=r',m>m'$, then
\[
  H(m,r,U)\subset H(m',r',U).
\]
Introducing a parameter $m$ will allow us to control polynomial
quantities which appear when one manipulates these holomorphic
function spaces, using Lemmas \ref{prop:lem-easy} and
\ref{prop:lem-hard}. They correspond to a regularity condition at the
boundary of a maximal holomorphic extension: for instance, the
function $x\mapsto x\log(x)$ belongs to $H(1,1,(1/2,3/2))$ but not to
$H(m,1,(1/2,3/2))$ for $m>1$.

It will be useful in the course of this paper to consider various
analytic norms for the same function while maintaining a fixed norm. The definition of the spaces
$H(m,r,U)$ immediately imply the following fact.
\begin{prop}\label{prop:move-m-r}
  Let $m_0\in \N_0$ and $r_0>0$. Let $U$ be a domain in $\R^d$. Let $f\in H(m_0,r_0,U)$. Then, for all
  $m\geq m_0$, for all $r\geq r_02^{m-m_0}$, one has $f\in H(m,r,U)$
  with 
\[
\|f\|_{H(m,r,U)}\leq \|f\|_{H(m_0,r_0,U)}.
\]
\end{prop}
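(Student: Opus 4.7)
The plan is to reduce the statement to an elementary inequality between $r/r_0$ and $(j+1)$, raised to appropriate powers.

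First I would unfold the definition: by hypothesis there is a constant $C_0 = \|f\|_{H(m_0,r_0,U)}$ such that, for every $j \in \N_0$,
\[
  \sup_{x \in U} \|\nabla^j f(x)\|_{\ell^1} \leq \frac{C_0\, r_0^j\, j!}{(j+1)^{m_0}}.
\]
To conclude that $f \in H(m,r,U)$ with norm at most $C_0$, it suffices to verify that for all $j \geq 0$,
\[
  \frac{r_0^j}{(j+1)^{m_0}} \leq \frac{r^j}{(j+1)^m},
\]
which after rearrangement is equivalent to
\[
  (j+1)^{m-m_0} \leq \left(\frac{r}{r_0}\right)^j.
\]

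The core observation is the elementary bound $j+1 \leq 2^j$, valid for every $j \in \N_0$ (with equality at $j=0$ and $j=1$, and easily checked by induction for $j \geq 2$). Since $m - m_0 \geq 0$, raising both sides to the power $m-m_0$ gives $(j+1)^{m-m_0} \leq 2^{j(m-m_0)}$. Using the hypothesis $r \geq r_0 2^{m-m_0}$, i.e.\ $r/r_0 \geq 2^{m-m_0}$, we then have $2^{j(m-m_0)} \leq (r/r_0)^j$, and combining these two inequalities yields the required bound.

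There is no real obstacle here; the only point requiring a moment's attention is handling the case $j=0$, where the inequality $(j+1)^{m-m_0} \leq (r/r_0)^j$ reads $1 \leq 1$, so it holds trivially, and the cases $j \geq 1$ follow from the chain above. Putting these estimates together we obtain
\[
  \sup_{x \in U} \|\nabla^j f(x)\|_{\ell^1} \leq \frac{C_0\, r^j\, j!}{(j+1)^m} \qquad \text{for all } j \in \N_0,
\]
which proves that $f \in H(m,r,U)$ with $\|f\|_{H(m,r,U)} \leq C_0 = \|f\|_{H(m_0,r_0,U)}$.
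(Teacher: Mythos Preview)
Your proof is correct. The paper does not spell out a proof of this proposition, stating only that it follows immediately from the definition of the spaces $H(m,r,U)$; your argument via the elementary bound $j+1\leq 2^j$ is precisely the verification the paper leaves implicit.
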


The following lemma will be used several times in what
follows.
\begin{lem}\label{prop:lem-easy}
  Let $d\in \N_0$. There exists $C>0$ such that, for
  any $m\geq \max(d+2,2(d+1))$,
  for any $j\in \N_0$, one has
  \[
    \sum_{i=0}^{j}\cfrac{\min(i+1,j-i+1)^d(j+1)^m}{(i+1)^m(j-i+1)^m}\leq
    2+C\frac{3^m}{4^m}.
    \]
  \end{lem}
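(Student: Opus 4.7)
The plan is to separate the sum into the boundary indices $i\in\{0,j\}$, which each contribute exactly $1$ and so account for the "$2$" on the right-hand side, and the interior indices $1\le i\le j-1$, which I will show contribute at most $C\cdot(3/4)^m$. At $i=0$ and at $i=j$, the minimum equals $1$ and the summand reduces to $(j+1)^m/(j+1)^m=1$. The cases $j=0$ and $j=1$ have no interior terms, so the bound is immediate in those cases.

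For $j\ge 2$ and $1\le i\le j-1$, set $u=i+1$, $v=j-i+1$ (so $u,v\ge 2$ and $u+v=j+2$), and let $\mu=\min(u,v)$, $M=\max(u,v)$. The $i$-th summand rewrites as $\mu^{d}\bigl((j+1)/(uv)\bigr)^m$. The core of the argument consists of two pointwise upper bounds on $(j+1)/(uv)$:
\begin{equation*}
  \frac{j+1}{uv}\;\le\;\tfrac34 \quad\text{(whenever }u,v\ge 2\text{)},\qquad \frac{j+1}{uv}\;\le\;\tfrac{2}{\mu}.
\end{equation*}
The first follows from considering $f(u,v)=3uv-4(u+v-1)=3uv-4u-4v+4$: its partial derivatives are $3v-4$ and $3u-4$, both strictly positive when $u,v\ge 2$, so $f$ is increasing in each variable on $\{u,v\ge 2\}$ and vanishes at the corner $u=v=2$, hence $f\ge 0$ throughout. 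The second is the elementary $(u+v-1)/(uv)\le 1/u+1/v\le 2/\mu$.

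The second observation is that for each integer $\mu\ge 2$, at most two indices $i\in\{1,\ldots,j-1\}$ realize $\min(u,v)=\mu$ (namely $i=\mu-1$ and $i=j-\mu+1$, which coincide when $j=2\mu-2$). Grouping the interior sum by the value of $\mu$ and applying the $3/4$-bound when $\mu=2$ and the $2/\mu$-bound when $\mu\ge 3$ yields
\begin{equation*}
  \sum_{i=1}^{j-1}\mu^{d}\left(\frac{j+1}{uv}\right)^{m} \;\le\; 2\cdot 2^{d}\bigl(\tfrac34\bigr)^m\;+\;2\sum_{\mu\ge 3}\mu^{d}\bigl(\tfrac{2}{\mu}\bigr)^{m}.
\end{equation*}
The tail is controlled using $m\ge d+2$ (implied by $m\ge\max(d+2,2(d+1))$): for $\mu\ge 3$, $\mu^{-(m-d)}\le 3^{-(m-d-2)}\mu^{-2}$, hence $\sum_{\mu\ge 3}2^{m}\mu^{d-m}\le 9\cdot 3^{d}(2/3)^m\cdot\sum_{\mu\ge 3}\mu^{-2}$, which is a finite multiple of $3^{d}(3/4)^m$ after using $(2/3)^m\le(3/4)^m$. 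Adding the boundary contribution yields the conclusion with $C$ depending only on $d$.

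The main obstacle is that the crude bound $\mu^{d}(3/4)^m$ cannot be summed over $i\in\{1,\ldots,j-1\}$ uniformly in $j$, because the factor $\mu^d$ grows with the number of indices available; one needs the sharper decay $(2/\mu)^m$ for $\mu\ge 3$ and the uniform $(3/4)^m$ at $\mu=2$, used in tandem with the fact that only two indices realize each value of $\mu$. The ratio $3/4$ is sharp: at $(u,v)=(2,2)$, i.e.\ $j=2,\ i=1$, the estimate $(j+1)/(uv)\le 3/4$ is attained with equality.
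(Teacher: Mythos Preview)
Your proof is correct. Both your argument and the paper's share the same skeleton: the boundary terms $i\in\{0,j\}$ supply the constant $2$, and every interior term satisfies the pointwise bound $\frac{j+1}{(i+1)(j-i+1)}\le \frac34$. The paper obtains this last inequality via log-convexity and the convex-hull Lemma~\ref{lem:hull}; you obtain it by a direct monotonicity check on $3uv-4(u+v-1)$, which is more elementary.

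The organization of the interior sum is where the two approaches differ. The paper does not prove Lemma~\ref{prop:lem-easy} directly but deduces it from the general Lemma~\ref{prop:lem-hard} with $n=2$; that proof splits according to whether the smallest index lies above or below a threshold $(\ell+1)/(3(n-1))$ and closes with a $\zeta$-function estimate. Your argument instead groups the interior indices by the value $\mu=\min(i+1,j-i+1)$, uses the uniform $(3/4)^m$ bound only at $\mu=2$, and for $\mu\ge 3$ exploits the sharper pointwise estimate $(j+1)/(uv)\le 2/\mu$; since at most two indices realise each value of $\mu$, the tail becomes an explicit convergent series $2^m\sum_{\mu\ge 3}\mu^{d-m}$, which you bound by a constant multiple of $(3/4)^m$ using $m\ge d+2$. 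This is shorter and self-contained for the case $n=2$, avoiding both Lemma~\ref{lem:hull} and the threshold case analysis. The paper's route, by contrast, is written to scale to arbitrary $n$, which is what is actually needed in the applications (for instance $n=3$ in Proposition~\ref{prop:anal-symb-class} and Proposition~\ref{prop:cov-inv}).
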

  We postpone the proof of this lemma until Section
  \ref{sec:calc-analyt-symb}. More specifically, this is a
  particular case of Lemma \ref{prop:lem-hard}.

Analytic function classes form an algebra for $m$ large enough, and nonvanishing
functions can be inverted.
\begin{prop}\label{prop:alg-holo-func}
  There exists $C>0$ such that the following is true. Let $m\geq 2$. Let $r>0$ and let $U$ be a domain in $\R^n$. Let
  $f,g\in H(m,r,U)$. Then $fg\in H(m,r,U)$, and
  \[\|fg\|_{H(m,r,U)}\leq
    C\|f\|_{H(m,r,U)}\|g\|_{H(m,r,U)}.\] The constant $C$ is universal.

  If $f$ is bounded away from zero on $U$, then $f^{-1}\in H(m,r,U)$, with
  \[
    \|f^{-1}\|_{H(m,r,U)}\leq \cfrac{\|f\|_{H(m,r,U)}}{\inf_U(|f|)^2}.
  \]
\end{prop}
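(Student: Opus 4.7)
My plan is to prove the two statements in sequence, the first via Leibniz's formula plus Lemma \ref{prop:lem-easy}, and the second via the relation $f\cdot f^{-1}=1$ and induction.

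For the algebra property, I would begin with the pointwise Leibniz identity
\[
\partial^{\alpha}(fg)=\sum_{\beta\leq \alpha}\binom{\alpha}{\beta}\partial^{\beta}f\,\partial^{\alpha-\beta}g.
\]
Taking absolute values, summing over multi-indices with $|\alpha|=j$, swapping the order of summation, and grouping terms by $|\beta|=i$ yields
\[
\|\nabla^{j}(fg)\|_{\ell^{1}}\leq \sum_{i=0}^{j}\Bigl(\sum_{|\beta|=i,\,|\gamma|=j-i}\binom{\beta+\gamma}{\beta}|\partial^{\beta}f|\,|\partial^{\gamma}g|\Bigr).
\]
I would then use Lemma \ref{lem:control-poly-binom} to bound the polyindex binomial $\binom{\beta+\gamma}{\beta}$ by the scalar binomial $\binom{j}{i}$, factoring the inner sum into $\|\nabla^{i}f\|_{\ell^{1}}\|\nabla^{j-i}g\|_{\ell^{1}}$. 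Substituting the defining norm inequalities of $H(m,r,U)$ and cancelling $r^{j}j!$ via $\binom{j}{i}i!(j-i)!=j!$ reduces the problem to bounding
\[
\sum_{i=0}^{j}\frac{(j+1)^{m}}{(i+1)^{m}(j-i+1)^{m}},
\]
which is exactly the quantity controlled in Lemma \ref{prop:lem-easy} with $d=0$, valid since $m\geq 2=\max(d+2,2(d+1))$. This yields a universal constant $C$ as claimed.

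For the inverse, set $g=f^{-1}$ and $M=\inf_{U}|f|$. Differentiating the identity $fg=1$ and isolating the top-order term in $g$ gives, for $|\alpha|\geq 1$,
\[
\partial^{\alpha}g=-f^{-1}\Bigl(\partial^{\alpha}f\cdot g+\sum_{0<\beta<\alpha}\binom{\alpha}{\beta}\partial^{\beta}f\,\partial^{\alpha-\beta}g\Bigr).
\]
Taking $\ell^{1}$ norms over $|\alpha|=j$, using $|g|\leq M^{-1}$ and the same multinomial reduction as above, yields
\[
\|\nabla^{j}g\|_{\ell^{1}}\leq M^{-1}\Bigl(\|\nabla^{j}f\|_{\ell^{1}}\|g\|_{\infty}+\sum_{i=1}^{j-1}\binom{j}{i}\|\nabla^{i}f\|_{\ell^{1}}\|\nabla^{j-i}g\|_{\ell^{1}}\Bigr).
\]
I would then proceed by strong induction on $j$ to establish $\|\nabla^{j}g\|_{\ell^{1}}\leq K r^{j}j!/(j+1)^{m}$ with $K=\|f\|_{H(m,r,U)}/M^{2}$, the base case $j=0$ following from $|g|\leq M^{-1}\leq K$ (since $\|f\|\geq\sup|f|\geq M$). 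Inserting the inductive hypothesis and the $H(m,r,U)$ bound on $f$, the recursion reduces to verifying
\[
\frac{\|f\|}{M}\Bigl(\frac{1}{(j+1)^{m}}+\sum_{i=1}^{j-1}\frac{1}{(i+1)^{m}(j-i+1)^{m}}\Bigr)\leq \frac{1}{(j+1)^{m}},
\]
where the inner sum is controlled by Lemma \ref{prop:lem-easy} applied with $d=0$, the $i=0$ and $i=j$ boundary terms having been extracted.

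The main obstacle is that closing this induction requires careful bookkeeping of the constants: the naive extraction leaves a term of size $\|f\|/M$ which a priori exceeds $1$, so one must exploit the fact that the remaining sum in Lemma \ref{prop:lem-easy} is of order $(3/4)^{m}$ and that $M\leq\sup|f|\leq\|f\|$ to balance the inequality. If necessary, one absorbs the overhead into the universal constant $C$ coming from Lemma \ref{prop:lem-easy}, or alternatively localises via a Neumann series in the Banach algebra $H(m,r,U)$ after writing $f=c(1+h/c)$ with $c$ chosen to make $\|h\|$ controllable — though the inductive route above is closer to the stated bound.
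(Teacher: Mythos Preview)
Your treatment of the first part (the algebra property) is correct and follows the paper's proof essentially verbatim: Leibniz, Lemma~\ref{lem:control-poly-binom} to pass from polyindex to scalar binomials, and Lemma~\ref{prop:lem-easy} with $d=0$.

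The second part, however, has a genuine gap. Your induction cannot close with the target constant $K=\|f\|/M^{2}$. Indeed, inserting the bound on $\|\nabla^{j}f\|_{\ell^{1}}$ and the sharp $\|g\|_{\infty}\leq M^{-1}$ into the boundary term already gives
\[
M^{-2}\|\nabla^{j}f\|_{\ell^{1}}\leq K\,\frac{r^{j}j!}{(j+1)^{m}},
\]
with equality possible; the cross terms $1\leq i\leq j-1$ then push you strictly over the target for every $j\geq 2$. Replacing $K$ by $C'K$ does not help either: after dividing through, the requirement becomes
\[
\frac{1}{C'}+\frac{\|f\|}{M}\,(j+1)^{m}\sum_{i=1}^{j-1}\frac{1}{(i+1)^{m}(j-i+1)^{m}}\leq 1,
\]
and since the interior sum is of order $(3/4)^{m}$ but is multiplied by $\|f\|/M$, which can be arbitrarily large for fixed $m$, no choice of $C'$ works uniformly. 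Your Neumann-series fallback $f=c(1+h/c)$ also fails for complex-valued $f$: if $\arg f$ varies, no constant $c$ makes $\|h/c\|<1$.

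The paper circumvents this by a trick you do not mention: it first passes to $|f|^{2}=f\overline{f}$, which is real and strictly positive, inverts \emph{that} by a Neumann series in the Banach algebra (after rescaling so that $\|1-g\|<1$, which now works precisely because $|f|^{2}>0$), and then recovers $f^{-1}=\overline{f}\,|f|^{-2}$. The factor of $C$ from the product bound appears and cancels along the way, yielding exactly $\|f^{-1}\|\leq\|f\|/\inf|f|^{2}$.
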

\begin{proof}
  Let $f,g\in H(m,r,U)$ and $j\in \N_0$. Then
  \[
    \sum_{|\alpha|=j}|\partial^{\alpha}(fg)|\leq
    \sum_{|\beta+\gamma|=j}\binom{\beta+\gamma}{\beta}|\partial^{\beta}f|\,|\partial^{\gamma}g|
  \]
  By Lemma \ref{lem:control-poly-binom}, one has, for every $\beta$
  and $\gamma$ such that $|\beta+\gamma|=j$,
  \[
    \binom{\beta+\gamma}{\beta}\leq
    \binom{|\beta+\gamma|}{|\beta|}=\binom{j}{|\beta|}.
    \]
    Hence,
    \[
      \sum_{|\alpha|=j}|\partial^{\alpha}(fg)|\leq
  \sum_{i=0}^{|j|}\binom{j}{i}\|\nabla^if\|_{\ell^{1}}\|\nabla^{|\alpha|-i}g\|_{\ell^{1}},
  \]
  so that, for any $j\geq 0$, one has
  \begin{multline*}
    \|\nabla^j(fg)\|_{\ell^{1}}\\\leq
    \|f\|_{H(m,r,U)}\|g\|_{H(m,r,U)}\cfrac{r^jj!}{(j+1)^m}\sum_{i=0}^j\binom{j}{i}^{-1}\binom{j}{i}\cfrac{(j+1)^m}{(i+1)^m(j-i+1)^m}.
  \end{multline*}
  Hence,
  \[
    \|\nabla^j(fg)\|_{\ell^{1}}\leq
    \|f\|_{H(m,r,U)}\|g\|_{H(m,r,U)}\cfrac{r^jj!}{(j+1)^m}\sum_{i=0}^j\cfrac{(j+1)^m}{(i+1)^m(j-i+1)^m}.
  \]
  Let us use Lemma \ref{prop:lem-easy} with $d=0$. If $m\geq 2$, this quantity is
  bounded independently of $j$ and $m$, so that
  \[
    \|\nabla^j(fg)\|_{\ell^{1}}\leq
    C\|f\|_{H(m,r,U)}\|g\|_{H(m,r,U)}\cfrac{r^jj!}{(j+1)^m}.
  \]
  This concludes the first part of the proof.

  Let now $f\in H(m,r,U)$ be bounded away from zero on $U$. We introduce the
  modified product $f\cdot g = \frac{fg}{C}$, for which $H(m,r,U)$
  is a Banach algebra. 

  First, $|f|^2$ is
  real-valued and strictly positive; moreover \[|f|^2=f\overline{f}\in
  H(m,r,U)\] and, by the property above,\[
    \||f|^2\|_{H(m,r,U)}\leq C\|f\|_{H(m,r,U)}^2.
  \]
  Let $g=\frac{|f|^2}{2\||f|^2\|_{H(m,r,U)}}$. Then
    \[
      \|1-g\|_{H(m,r,U)}\leq 1-\frac{\inf_U(|f|^2)}{2\||f|^2\|_{H(m,r,U)}}<1.
      \]
      In particular, $g=1-(1-g)$ so that, letting $h$ be such that $g\cdot
      h=1$, one has
      \[h=\sum_{k=0}^{+\infty}(1-g)^{\cdot k},
      \]
      where the power series converge because the $\cdot$ product
      induces a Banach algebra structure on $H(m,r,U)$.
      
        Hence, one can control
        \[
          \|h\|_{H(m,r,U)}\leq \cfrac{2\||f|^2\|_{H(m,r,U)}}{\inf_U(|f|^2)}.
        \]
        Now $|f|^{-2}=\cfrac{h}{2C\||f|^2\|_{H(m,r,U)}}$ so
        that
        \[
          \||f|^{-2}\|_{H(m,r,U)}\leq
          \cfrac{1}{C\inf_U(|f|^2)}.
        \]
        We now turn to $f^{-1}=\overline{f}|f|^{-2}$, which is
        controlled as follows:
        \[
          \|f^{-1}\|_{H(m,r,U)}\leq
          \cfrac{\|f\|_{H(m,r,U)}}{\inf_U(|f|^2)}.
        \]
        This concludes the proof.
      \end{proof}

The spaces $H(r,m,U)$ contain all holomorphic functions.
\begin{prop}\label{prop:Cauchy}
  Let $d\in \N$. For every $T>0$ we let $P(0,T)$ be the polydisk of center $0$ and of
  radius $T$ in $\C^d$.
  
  Let $f$ be a holomorphic, bounded function on $P(0,2T)$,
  continuous up to the boundary. 
  Then
  \[
    \|f\|_{H(-d,dT^{-1},P(0,T))}\leq C\sup_{P(0,2T)}|f|.
    \]
  \end{prop}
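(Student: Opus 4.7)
The plan is to reduce the estimate to a standard Cauchy bound on polydisks and then absorb combinatorial factors into the constants $r = dT^{-1}$ and $m = -d$.

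First I would fix $x \in P(0,T)$ and observe that the polydisk of center $x$ and polyradius $(T,\ldots,T)$ is contained in $P(0,2T)$. Cauchy's integral formula on polydisks then yields, for every polyindex $\alpha$,
\[
|\partial^\alpha f(x)| \leq \frac{\alpha!}{T^{|\alpha|}} \sup_{P(0,2T)} |f|.
\]
Since $\alpha! \leq |\alpha|!$ for any polyindex (the factorial of a partition is bounded by the factorial of its sum), this gives
\[
\|\nabla^j f(x)\|_{\ell^1} = \sum_{|\alpha| = j} |\partial^\alpha f(x)| \leq \#\{\alpha \in \N_0^d : |\alpha| = j\} \cdot \frac{j!}{T^j} \sup_{P(0,2T)} |f|.
\]

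Next I would count multiindices: $\#\{\alpha \in \N_0^d : |\alpha| = j\} = \binom{j+d-1}{d-1}$, which is a polynomial in $j$ of degree $d-1$, so there is a constant $C_d$ with $\binom{j+d-1}{d-1} \leq C_d (j+1)^{d-1}$. Combining,
\[
\|\nabla^j f(x)\|_{\ell^1} \leq C_d (j+1)^{d-1} \cdot \frac{j!}{T^j} \cdot \sup_{P(0,2T)} |f|.
\]

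Finally I would compare this with the target bound, which for $(m,r) = (-d, dT^{-1})$ reads $C \sup_{P(0,2T)}|f| \cdot (dT^{-1})^j j! (j+1)^d$. Since $d \geq 1$ one has $d^j \geq 1$ and $(j+1)^d \geq (j+1)^{d-1}$, so the estimate above is at least as strong: the desired constant $C$ can be taken equal to $C_d$. There is no real obstacle here; the factor $d$ in $r$ and the exponent $-d$ in $m$ are precisely the slack needed so that Cauchy's estimate plus the trivial bound on the number of monomials of degree $j$ in $d$ variables suffices.
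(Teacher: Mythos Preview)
Your proof is correct and follows essentially the same route as the paper: apply the Cauchy estimate on a polydisk of radius $T$ about each point of $P(0,T)$, use $\alpha!\leq |\alpha|!$, and bound the number of multi-indices of weight $j$ by a polynomial $C_d(j+1)^{d-1}$, observing that the factors $d^j$ in $r=dT^{-1}$ and $(j+1)^d$ in $m=-d$ are pure slack. The only cosmetic difference is that the paper integrates over the fixed torus $|\xi_i|=2T$ rather than recentring the polydisk at $x$, which yields the same bound $|\partial^{\nu}f(x)|\leq C\,\nu!\,T^{-|\nu|}\sup_{P(0,2T)}|f|$.
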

  \begin{proof}
    The proof relies on the Cauchy formula. For all $z\in P(0,T)$ and
    $\nu\in \N_0^d$, there holds
    \[
      \partial^{\nu}f(z)=C\int_{|\xi_1|=\ldots=|\xi_d|=2T}\cfrac{\nu!f(\xi)}{(\xi_1-z_1)^{\nu_1}(\xi_2-z_2)^{\nu_2}\ldots(\xi_d-z_d)^{\nu_d}}\dd
      \xi.
    \]
    As $z\in P(0,r)$ and $|\xi_1|=\ldots=|\xi_d|=2T$, for every $1\leq
    i \leq d$ there holds $|\xi_i-z_i|\geq T$, so that
    \[
    \sup_{P(0,T)}|\partial^{\nu}(f)|\leq
    CT^{-|\nu|}\nu!\sup_{P(0,2T)}|f|.
  \]
  In particular, since $\nu!\leq |\nu|!d^{|\nu|}$, by summing over
  $\nu$'s with same norm we obtain
  \[
    \sup_{x\in P(0,T)}\|\nabla^jf(x)\|_{\ell^1}\leq
    C(j+1)^d(dT^{-1})^{j}j!,
  \]
  hence the claim.
  \end{proof}


\section{Calculus of analytic symbols}
\label{sec:calc-analyt-symb}

In this section we define and study (formal) \emph{analytic symbols}, which we
will show to be well suited to the study of stationary phases with
complex, real-analytic phases.

\subsection{Analytic symbols}
\label{sec:analytic-symbols}

We begin with an explicit definition of $C^j$-seminorms on compact manifolds.
\begin{defn}
  Let $X$ be a compact manifold (with smooth boundary). We fix a finite set
  $(\rho_V)_{V\in \mathcal{V}}$ of local charts on open sets $V$ which
  cover $X$.

  Let $j\geq 0$. The $C^j$ seminorm of a function $f:X\mapsto \C$
  which is continuously differentiable $j$ times is defined as
  \[
    \|f\|_{C^j(X)}=\max_{V\in \mathcal{V}}\sup_{x\in
      V}\sum_{|\mu|=j}|\partial^{\mu}(f\circ \rho_V)(x)|.
  \]
\end{defn}

This definition is adapted to the multiplication of two functions. The
Leibniz formula yields directly:
\begin{prop}
  \label{prop:prod-Cj}
  Let $X$ be a compact manifold (with smooth boundary) with fixed local charts, and $f,g\in C^j(X,\R)$.

  Then $fg\in C^j(X,\R)$ with
  \[
    \|fg\|_{C^j(X)}\leq \sum_{i=0}^j\binom{j}{i}\|f\|_{C^i(X)}\|g\|_{C^{j-i}(X)}.
    \]
\end{prop}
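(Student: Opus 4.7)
The plan is to prove this by working chart-by-chart and applying the multivariate Leibniz rule, then converting from polyindex to scalar indices using Lemma \ref{lem:control-poly-binom}.

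First, I would fix a chart $\rho_V$ with $V\in \mathcal{V}$ and, abusing notation by writing $f$ for $f\circ \rho_V$, start from the multivariate Leibniz rule
\[
\partial^{\mu}(fg) = \sum_{\nu\leq \mu}\binom{\mu}{\nu}\,\partial^{\nu}f\,\partial^{\mu-\nu}g
\]
valid for every polyindex $\mu$. Taking absolute values, summing over $|\mu|=j$, and grouping the inner sum according to the scalar $i=|\nu|$, I get
\[
\sum_{|\mu|=j}|\partial^{\mu}(fg)| \leq \sum_{i=0}^{j}\sum_{|\mu|=j}\sum_{\substack{\nu\leq \mu\\ |\nu|=i}}\binom{\mu}{\nu}\,|\partial^{\nu}f|\,|\partial^{\mu-\nu}g|.
\]

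Next, I would apply Lemma \ref{lem:control-poly-binom} to replace $\binom{\mu}{\nu}$ by the larger scalar binomial $\binom{j}{i}$, which depends only on $i$ and $j$ and can be factored out of the inner double sum. After this substitution, the remaining double sum over pairs $(\nu,\mu-\nu)$ with $|\nu|=i$ and $|\mu-\nu|=j-i$ can be reindexed as an unconstrained sum over independent polyindices $\alpha$ with $|\alpha|=i$ and $\beta$ with $|\beta|=j-i$. This reindexing factorizes the sum as a product:
\[
\sum_{|\mu|=j}|\partial^{\mu}(fg)(x)| \leq \sum_{i=0}^{j}\binom{j}{i}\Bigl(\sum_{|\alpha|=i}|\partial^{\alpha}f(x)|\Bigr)\Bigl(\sum_{|\beta|=j-i}|\partial^{\beta}g(x)|\Bigr).
\]

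Finally, bounding the supremum of a product by the product of the suprema on $V$, and then taking the maximum over $V\in\mathcal{V}$ on both sides (using that the maximum of a sum is bounded by the sum of the maxima of the individual factors), yields the stated inequality. I do not expect any genuine obstacle here: the only nontrivial ingredient is the passage from polyindex binomials to scalar ones, which is exactly what Lemma \ref{lem:control-poly-binom} provides, and the rest is purely combinatorial bookkeeping.
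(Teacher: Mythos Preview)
Your proof is correct and matches the paper's approach. The paper itself offers no detailed proof, stating only that ``the Leibniz formula yields directly'' the result; your write-up is precisely the natural fleshing-out of that remark, and in fact the paper carries out the identical computation (Leibniz, then Lemma~\ref{lem:control-poly-binom}, then reindexing into a product of $\ell^1$-sums) in the proof of Proposition~\ref{prop:alg-holo-func}.
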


Using the convention above, let us generalise Definition
\ref{def:Anal-fct-space}, in order to define analytic symbols.

\begin{defn}\label{def:anal-symb}
  Let $X$ be a compact manifold (with boundary), with a fixed set of
  covering local charts.

  Let $r, R,m$ be positive real numbers. The space of analytic symbols
  $S^{r,R}_m(X)$ consists of sequences $(a_k)_{k\geq 0}$ of real-analytic
  functions on $X$, such that there exists $C\geq 0$ such that, for
  every $j\geq 0, k\geq 0$, one has
  \[
    \|a_k\|_{C^j(X)}\leq C\cfrac{r^jR^k(j+k)!}{(j+k+1)^m}.
  \]

  The norm of an element $a\in S^{r,R}_m(X)$ is defined as the
  smallest $C$ as above; then $S^{r,R}_m(X)$ is a Banach space.
\end{defn}

We are interested in symbols which have an expansion in
increasing powers of the semiclassical parameter. We will use the term
``symbols'' while, in the usual semiclassical vocabulary, we are dealing
with formal symbols to which we associate classical symbols by a
summation process in Proposition \ref{prop:sum-anal-symb}.

As for the analytic function classes $H(m,r,U)$ of Definition \ref{def:Anal-fct-space}, the spaces $S^{r,R}_m(X)$ are included in each other for
a lexicographic order, and the constants of injection are controlled
as follows:
\begin{prop}\label{prop:move-m-r-R}
  Let $X$ be a compact manifold (with boundary) with a fixed finite
  set of covering charts. Let $r_0,R_0,m_0$ positive.
  Let $f\in S^{r_0,R_0}_{m_0}(X)$. For every $m\geq m_0$, for every
  $r\geq r_02^{m-m_0}$ and $R\geq R_02^{m-m_0}$, one has $f\in
  S^{r,R}_m$ with
  \[
    \|f\|_{S^{r,R}_m(X)}\leq \|f\|_{S^{r_0,R_0}_{m_0}(X)}.
    \]
  \end{prop}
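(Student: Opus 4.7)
The plan is to verify directly from Definition \ref{def:anal-symb} that the bound with parameters $(r_0,R_0,m_0)$ implies the bound with parameters $(r,R,m)$ and the same constant. Write $C_0 = \|f\|_{S^{r_0,R_0}_{m_0}(X)}$ and $f = (a_k)_{k\geq 0}$. The hypothesis is
\[
\|a_k\|_{C^j(X)} \leq C_0 \frac{r_0^j R_0^k (j+k)!}{(j+k+1)^{m_0}} \qquad (j,k \geq 0),
\]
and the target is the same inequality with $(r_0,R_0,m_0)$ replaced by $(r,R,m)$.

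The first step is to cancel the common factor $(j+k)!$ and $C_0$, reducing the problem to the pointwise numerical inequality
\[
\left(\frac{r}{r_0}\right)^j \left(\frac{R}{R_0}\right)^k \geq (j+k+1)^{m-m_0} \qquad (j,k \geq 0).
\]
The second step uses the hypotheses $r \geq r_0 2^{m-m_0}$ and $R \geq R_0 2^{m-m_0}$: the left-hand side is bounded below by $2^{(m-m_0)(j+k)}$, so it is enough to establish
\[
2^{n} \geq n+1 \qquad (n \in \N_0),
\]
applied with $n = j+k$. This last bound is elementary, by induction on $n$, with equality at $n = 0$.

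There is essentially no obstacle: this is the symbol-level analogue of Proposition \ref{prop:move-m-r}, with the two indices $j$ and $k$ now playing symmetric roles. The only mildly delicate point is the choice of the base $2$ in the hypotheses, which must be large enough that the exponential factor $2^{j+k}$ dominates the polynomial weight $(j+k+1)^{m-m_0}$ uniformly in $j+k$ for any fixed exponent $m - m_0$; this is exactly the content of $2^n \geq n+1$.
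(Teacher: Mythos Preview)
Your proof is correct. The paper does not give an explicit proof of this proposition, treating it (like Proposition~\ref{prop:move-m-r}) as an immediate consequence of the definition; your direct verification via the elementary inequality $2^n\geq n+1$ is exactly the intended argument.
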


  The notion of sum of a formal series in $N^{-1}$ is well-defined up to $O(N^{-\infty})$, by a
process known as \emph{Borel summation}. In a similar but more
explicit way, formal series corresponding to analytic symbols can be
summed up to an exponentially small error.
\begin{defn}\label{def:sum-anal-symb}
  Let $X$ be a compact Riemannian manifold (with boundary) and let $f\in
  S^{r,R}_m(X)$. Let $c_R=\frac{e}{3R}$. The \emph{summation} of $f$ is
  defined as
  \[
    X\times \N \ni(x,N)\mapsto f(N)(x)=\sum_{k=0}^{c_RN}N^{-k}f_k(x).
    \]
  \end{defn}
  \begin{prop}\label{prop:sum-anal-symb}
    Let $X$ be a compact Riemannian manifold with boundary and let
    $f\in S^{r,R}_m(X)$. Let $c_R=\frac{e}{3R}$. Then
    \begin{enumerate}
    \item The function $f(N)$ is bounded on $X$ uniformly for $N\in \N$.
    \item For every $0<c_1<c_R$, there exists $c_2>0$ such that
      \[
        \sup_{x\in
          X}\left|\sum_{k=c_1N}^{c_RN}N^{-k}f_k(x)\right|=O(e^{-c_2N}).
        \]
    \end{enumerate}
  \end{prop}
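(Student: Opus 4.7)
The plan is to reduce everything to the $j=0$ case of the analytic symbol estimate and then use Stirling's formula to turn $k!/N^k$ into a controllable exponential factor. The key algebraic observation is that the constant $c_R=e/(3R)$ is chosen precisely so that $R\,c_R/e = 1/3 < 1$, which will make all relevant sums geometrically convergent.

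\textbf{Step 1: $C^0$ bound on $f_k$.} Taking $j=0$ in Definition \ref{def:anal-symb} (where the $C^0$ seminorm of a function is its supremum), for $f\in S^{r,R}_m(X)$ I get
\[
  \sup_{x\in X}|f_k(x)| \;\leq\; \|f\|_{S^{r,R}_m(X)}\,\frac{R^k k!}{(k+1)^m}.
\]

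\textbf{Step 2: Stirling.} Using $k!\leq C\,k^k e^{-k}\sqrt{k+1}$ for a universal constant $C$, I rewrite
\[
  N^{-k}\,\frac{R^k k!}{(k+1)^m} \;\leq\; C\,\Big(\frac{Rk}{eN}\Big)^{k}\frac{\sqrt{k+1}}{(k+1)^m}.
\]
For $0\leq k\leq c_R N$, the inequality $Rk/(eN)\leq R c_R/e = 1/3$ gives $(Rk/(eN))^k \leq 3^{-k}$, so
\[
  N^{-k}\,|f_k(x)| \;\leq\; C\,\|f\|_{S^{r,R}_m(X)}\,\frac{3^{-k}\sqrt{k+1}}{(k+1)^m}.
\]

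\textbf{Step 3: Conclude (i).} Summing the previous bound over $0\leq k\leq c_R N$ is controlled by the convergent series $\sum_{k\geq 0} 3^{-k}\sqrt{k+1}/(k+1)^m$, so $\sup_{x\in X}|f(N)(x)|$ is bounded by a constant independent of $N$.

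\textbf{Step 4: Conclude (ii).} For the tail, using the same bound,
\[
  \sup_{x\in X}\Bigg|\sum_{k=\lceil c_1 N\rceil}^{\lfloor c_R N\rfloor} N^{-k} f_k(x)\Bigg|
  \;\leq\; C\,\|f\|_{S^{r,R}_m(X)}\sum_{k\geq c_1 N}\frac{3^{-k}\sqrt{k+1}}{(k+1)^m}.
\]
The right-hand side is bounded by $C'\,3^{-c_1 N}$, so any $0<c_2<c_1\log 3$ works, proving the exponential decay.

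There is no real obstacle here: the proposition is essentially a direct consequence of the definition combined with the standard Stirling estimate, and the only delicate point is checking that the specific threshold $c_R = e/(3R)$ makes the combination $R^k k!/(eN)^k$ summable, which it does with slack (the factor $1/3$ could be replaced by anything strictly less than $1$, and the choice $1/3$ is merely a convenient margin for later use).
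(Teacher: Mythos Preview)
Your proof is correct and follows essentially the same strategy as the paper: bound the $k$-th term of the sum by a geometric sequence and sum. The only difference is in execution: the paper avoids Stirling entirely by observing that the ratio of consecutive terms $N^{-k}R^k k!$ is $Rk/N\leq Rc_R=e/3<1$, which immediately gives the $k$-th term bounded by $(e/3)^k$ and hence $c_2=c_1\log(3/e)$; you instead invoke Stirling to get the sharper bound $C\,3^{-k}\sqrt{k+1}/(k+1)^m$ and hence any $c_2<c_1\log 3$. Your route is marginally heavier but yields a better decay rate; the paper's route is more elementary. Neither difference is substantive.
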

  \begin{proof}
    \fixitemthm
    \begin{enumerate}
    \item Since
      \[\sup_{x\in X}|f_k(x)|\leq \|f\|_{S^{r,R}_m(X)}R^kk!,
      \]
      it remains to control
      \[
        \sum_{k=0}^{c_RN}N^{-k}R^kk!.
      \]
      In this series, the first term is $1$, and the ratio between two
      consecutive terms is
      \[
        \cfrac{N^{-k}R^{k}k!}{N^{-k+1}R^{k-1}(k-1)!}=\cfrac{Rk}{N}\leq
        Rc_R=\frac{e}{3}<1.
      \]
      Hence,
      \[\sup_{x\in X}|f(x,N)|\leq
      \|f\|_{S^{r,R}_m(X)}\sum_{k=0}^{c_RN}(e/3)^k\leq
      \|f\|_{S^{r,R}_m(X)}\frac{3}{3-e}.
      \]
    \item The claim reduces to a control on
      \[
        \sum_{k=c_1N}^{c_RN}N^{-k}R^kk!.
      \]
      In this series, on which each term is smaller than $(e/3)^k$,
      the first term is controlled by
      \[
        (e/3)^{c_1N}=\exp(c_1\log(e/3)N).
      \]
      Hence the claim, with $c_2=c_1\log(e/3)$.
      \end{enumerate}
    \end{proof}
    From the second point of Proposition \ref{prop:sum-anal-symb}, we
    see that the constant $c_R=\frac{e}{3R}$ is quite arbitrary (using the
    Stirling formula to control factorials, one could in fact consider any
    constant smaller than $\frac{e}{R}$). We use it in Definition
    \ref{def:sum-anal-symb} to avoid dealing with equivalence classes
    of sequences whose difference is $O(e^{-c'N})$ for some $c'$, as in \cite{sjostrand_singularites_1982}.

Before studying further the space $S^{r,R}_m(X)$, let us prove a
generalisation of
Lemma \ref{prop:lem-easy}.

\begin{lem}
  \label{prop:lem-hard}
  Let $d\in \N$ and $n\geq 2$. There exists $C(n,d)>0$ such that, for any $m\geq \max(d+2,2(d+n-1))$,
  for any $\ell\in \N_0$, one has
  \[
    \sum_{\substack{0\leq i_1\leq i_2\leq \cdots \leq
        i_n\\i_1+\ldots+i_n=\ell}}\cfrac{(i_{n-1}+1)^d(\ell+1)^m}{(i_1+1)^m\ldots(i_n+1)^m}\leq 1+C\frac{3^m}{4^m}.
  \]
\end{lem}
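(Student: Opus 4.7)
The strategy mirrors Lemma \ref{prop:lem-easy} (the $n=2$ case being a direct specialisation): isolate the single extremal tuple $(0,\ldots,0,\ell)$ and bound the remainder. For this extremal tuple $(i_{n-1}+1)^d = 1$ and $\prod_j(i_j+1)^m = (\ell+1)^m$, so the summand equals exactly $1$. When $\ell \in \{0,1\}$ this is the only sorted tuple summing to $\ell$ and the bound is trivial, so I focus on $\ell \geq 2$ and on the ``non-extremal'' tuples, namely those with $i_{n-1} \geq 1$.

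The central per-tuple inequality is
\[
\prod_{j=1}^n (i_j+1) \geq 2\ell
\]
for every non-extremal sorted tuple summing to $\ell \geq 2$, with equality only at $(0,\ldots,0,1,\ell-1)$ (and at $(0,\ldots,0,1,1)$ when $\ell=2$). This follows from a direct minimisation under the constraints $\sum_j(i_j+1) = \ell+n$ and at least two factors $\geq 2$: the product is smallest when all ``mass'' above $1$ is concentrated in a single factor. Consequently, for every non-extremal tuple,
\[
\frac{(\ell+1)^m}{\prod_j (i_j+1)^m} \leq \left(\frac{\ell+1}{2\ell}\right)^m,
\]
which equals $(3/4)^m$ exactly when $\ell = 2$ and is bounded by $(2/3)^m$ for $\ell \geq 3$. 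The case $\ell = 2$ is then immediate: the unique non-extremal tuple $(0,\ldots,0,1,1)$ contributes $2^d(3/4)^m$.

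For $\ell \geq 3$, I group the non-extremal tuples by the pair $(k,p) = (i_{n-1}, i_n)$ with $1 \leq k \leq p$, factor out the geometric term $(k+1)^{d-m}$ (summable in $k$ since $m \geq d+2$), and handle the sum over the remaining ``buffer'' entries $(i_1, \ldots, i_{n-2})$ as a constrained Dirichlet-type sum bounded by a product of $n-2$ copies of $\sum_{i \geq 0}(i+1)^{-m}$, which is a constant depending only on $n$ once $m \geq 2$. After this reduction to a two-dimensional $(k,p)$-sum the analysis parallels the $n=2$ case and is controlled by iterated use of the product inequality above together with geometric decay in both $k$ and $p$.

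The main obstacle is the careful book-keeping of polynomial-versus-geometric growth in $\ell$: the potentially large factor $(\ell+1)^m/(p+1)^m$ that appears when $p$ is small must be compensated by the fact that the buffer entries then sum to the large value $\ell - p - k$ with each entry bounded by $k$, forcing many of them to be nonzero and hence making the buffer product large in a precisely matching way. This matching has to be tracked quantitatively, and the hypothesis $m \geq 2(d+n-1)$ enters precisely to absorb the $n-1+d$ potential polynomial powers of $\ell$ (coming from the $n-2$ buffer variables, the range of $p$, and the $(k+1)^d$ factor) into the geometric factors, yielding the final uniform bound $1 + C(n,d)(3/4)^m$.
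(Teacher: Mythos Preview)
Your per-tuple inequality $\prod_j(i_j+1)\geq 2\ell$ for non-extremal tuples is correct, and your direct minimisation argument is actually cleaner than the paper's route via the convex-hull lemma (Lemma~\ref{lem:hull}). That step is fine.

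The gap is in the summation. Your plan is internally inconsistent: you first bound the buffer sum by the constant $\zeta(m)^{n-2}$, and then in the final paragraph you say that when $p$ is small the buffer product must be large ``in a precisely matching way'' to compensate for $(\ell+1)^m/(p+1)^m$. But once you have replaced the buffer by $\zeta(m)^{n-2}$ that compensation is gone, and the remaining two-dimensional sum genuinely fails. Concretely, with the buffer discarded, the pair $(k,p)$ is only constrained by $k\leq p$, $k+p\leq \ell$ and $p\geq \ell-(n-1)k$; taking $k\approx p\approx \ell/n$ is admissible and gives $(k+1)(p+1)\approx (\ell/n)^2$, so that the summand
\[
(k+1)^{d-m}\,\frac{(\ell+1)^m}{(p+1)^m}\;\approx\;\Bigl(\frac{\ell}{n}\Bigr)^{d-m}\Bigl(\frac{\ell+1}{\ell/n+1}\Bigr)^{m}
\]
carries a factor of order $n^{m}$, not $(3/4)^m$. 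No amount of geometric decay in $k$ alone repairs this, and the claimed parallel with the $n=2$ case breaks down because there $p=\ell-k$ is forced, whereas here $p$ ranges over an interval whose left endpoint can be as small as $\ell/n$.

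The paper avoids this by never separating out a buffer. It keeps all $n$ factors together and partitions the non-extremal tuples according to how many of the $i_j$ exceed the threshold $(\ell+1)/(3(n-1))$. On each piece either several factors are individually large (giving a product $\gg\ell$ and hence extra powers of $(\ell+1)^{-m}$ that beat the polynomial term-count), or $i_{n-1}$ is below the threshold and the remaining sum is dominated by a tail of $\zeta(m-d)$. The hypothesis $m\geq 2(d+n-1)$ is used exactly to absorb the $(\ell+1)^{d+n-1}$ coming from the crude term-count into the surplus $(\ell+1)^{-m}$ on each piece. If you want to rescue your $(k,p)$-grouping, you would need to retain a nontrivial lower bound on the buffer product as a function of $q=\ell-k-p$ and $k$ (for instance, at least $\lceil q/k\rceil$ of the buffer entries are nonzero, forcing the product to be $\geq 2^{\lceil q/k\rceil}$), and then carry this through the $(k,p)$-sum; this is doable but is not the argument you wrote.
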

This is indeed, up to a factor $2$, a generalisation of Lemma \ref{prop:lem-easy} which
corresponds to the case $n=2$.
\begin{proof}The case $\ell=1$ is trivial, so we assume
  $\ell\geq 2$.
  The only term in the sum such that $i_{n-1}=0$ is equal to $1$; let us control the sum
  restricted on $\{i_{n-1}\geq 1\}$. Let us first show
  that, if $i_{n-1}\geq 1$, then
  \begin{equation}
\label{eq:bd-interior-terms}
    \cfrac{(i_{n-1}+1)^d(\ell+1)^m}{(i_1+1)^m\ldots(i_n+1)^m}\leq
    (\ell+1)^d\cfrac{3^m}{4^m}.
  \end{equation}
  One has directly
  $
    (i_{n-1}+1)^d\leq (\ell+1)^d.
  $
  
  We are left with
  \[
    \cfrac{(\ell+1)^m}{(i_1+1)^m\ldots(i_n+1)^m},
    \]
    which is a symmetric
  expression of $(i_1,\ldots,i_n)$, log-convex as soon as
  $m\geq 0$, and which we wish to bound on the symmetrised set
  \[
    \left\{(i_1,\ldots,i_n)\in \N_0^n,\sum_{k=1}^ni_k=\ell,\text{ at least
        two of them are $\geq 1$}\right\}.\]
  By Lemma \ref{lem:hull}, it is sufficient to control the quantity
  above at the permutations of $(\ell-1,1,0,\ldots,0)$. At each
  of those points, since $\ell\geq 2$, one has
  \[
\cfrac{(\ell+1)^m}{(i_1+1)^m\ldots(i_n+1)^m}=    \left(\cfrac{\ell+1}{2\ell}\right)^m\leq \cfrac{3^m}{4^m}.
  \]
  
  We are now in position to prove the claim.
  Let us first restrict our attention to $\{i_1\geq \frac{\ell+1}{3(n-1)}\}$. There are less
  than $(\ell+1)^{n-1}$ such terms (since there are less than $(\ell+1)^{n-1}$ terms
  in total), and each of these terms is smaller than
  \[
    \cfrac{(\ell+1)^d(\ell+1)^m}{\left(\frac{\ell+1}{3(n-1)}\right)^{mn}}=\cfrac{(\ell+1)^d\left(3(n-1)\right)^{mn}}{(\ell+1)^{m(n-1)}}.\]
  Hence, this sum is controlled by
  \[
    \cfrac{(\ell+1)^{n+d-1}\left(3(n-1)\right)^{mn}}{(\ell+1)^{m(n-1)}}
  \]

  We now consider the sum on $\{i_1\leq
  \frac{\ell+1}{3n-1}\leq i_2\}$. There are again
  less than $(\ell+1)^{n-1}$ such terms, each of them smaller than
  \[
    \cfrac{(\ell+1)^d(\ell+1)^m}{\left(\frac{\ell+1}{3(n-1)}\right)^{m(n-1)}}=\cfrac{(\ell+1)^d\left(3(n-1)\right)^{m(n-1)}}{(\ell+1)^{m(n-2)}}.\]

  Thus, this sum
  is smaller than
  \[
    \cfrac{(\ell+1)^{n+d-1}\left(3(n-1)\right)^{m(n-1)}}{(\ell+1)^{m(n-2)}}.
  \]

  Similarly, we are able to control the sum on
  $\{i_{k}\leq \frac{\ell+1}{3(n-1)}\leq i_{k+1}\}$, for $k\leq n-2$, by
  \[
    \cfrac{(\ell+1)^{n+d-1}\left(3(n-1)\right)^{m(n-k)}}{(\ell+1)^{m(n-k-1)}}.
  \]
  If $m\geq 2(d+n-1)$, then $(\ell+1)^{n+d-1+m}\leq
  (\ell+1)^{3m/2}$, so that, for any $k\leq n-2$, if $\ell+1\geq 3n$, one has
  \begin{align*}
    \cfrac{(\ell+1)^{n+d-1}\left(3(n-1)\right)^{m(n-k)}}{(\ell+1)^{m(n-k-1)}}&\leq
    (\ell+1)^{\frac{3m}{2}}\left(\cfrac{3(n-1)}{\ell+1}\right)^{m(n-k)}\\
    &\leq (\ell+1)^{3m/2}\left(\frac{3(n-1)}{\ell+1}\right)^{2m}\\ &=\left(\cfrac{9(n-1)^2}{\sqrt{\ell+1}}\right)^m.
  \end{align*}
  Thus, for $\ell$ large enough (depending on $n$), this
  quantity is smaller than
  $\frac{3^m}{4^m}$; for $\ell$ small we have a number of terms bounded by a
  function of $n$, each term being smaller than
  $C(n,d)\frac{3^m}{4^m}$ by \eqref{eq:bd-interior-terms}.

  It remains to control the sum restricted on $\{1\leq i_{n-1}\leq
  \frac{\ell+1}{3(n-1)}\}$. In this case, $i_{n}+1\geq \frac{2(\ell+1)}{3}$, so that the
  sum is smaller than
  \begin{multline*}
    \frac{3^m}{2^m}\sum_{\substack{0\leq i_1\leq \cdots \leq
      i_{n-1}\leq \frac{\ell+1}{3(n-1)}\\i_{n-1}\geq 1}}\cfrac{(i_{n-1}+1)^d}{(i_1+1)^m(i_2+1)^m\ldots
      (i_{n-1}+1)^m}\\ \leq \cfrac{3^m}{2^m}(\zeta(m))^{n-2}(\zeta(m-d)-1).
  \end{multline*}
  The Riemann zeta function is decreasing, and if $m\geq d+2$, then \[\zeta(m-d)\leq 1+3\cdot2^{-(m-d)},\] so that
    the expression above is controlled by
    $C(n,d)\frac{3^m}{4^m}$. This concludes the proof.
\end{proof}

Analytic symbols behave well with respect to the Cauchy product, which
corresponds to the product of their summations.
\begin{prop}\label{prop:anal-symb-class}
  There exists $C_0\in \R$ and a function $C:\R^2\mapsto \R$ such that
  the following is true.

  Let $X$ be a compact Riemannian manifold (with boundary) and with
  a fixed finite set of covering charts. Let $r,R\geq
  0$ and $m\geq 4$. For $a,b\in S^{r,R}_m(X)$, let us define
  the Cauchy product of $a$ and $b$ as
  \[
    (a*b)_k=\sum_{i=0}^ka_ib_{k-i}.
    \]
    \begin{enumerate}
      \item
    The space $S^{r,R}_m(X)$ is an algebra for this Cauchy product, that
    is,
    \[
      \|a* b\|_{S^{r,R}_m}\leq
      C_0\|a\|_{S^{r,R}_m}\|b\|_{S^{r,R}_m},
    \]
      Moreover, there exists $c>0$ depending only on $R$ such that
    as $N$ tends to infinity, one has
    \[
      (a*b)(N)=a(N)b(N)+O(e^{-cN}).
    \]

    \item Let $r_0,R_0,m_0$ positive and $a\in S^{r_0,R_0}_{m_0}(X)$ with $a_0$
    nonvanishing. Then, for every $m$ large enough depending on $a$,
    for every $r\geq r_02^{m-m_0}$ and $R\geq R_02^{m-m_0}$, the
    symbol $a$ is invertible (for the Cauchy product) in
    $S^{r,R}_m(X)$, and its inverse $a^{\star -1}$ satisfies:
    \[
      \|a^{* -1}\|_{S^{r,R}_m(X)}\leq 2\min(|a_0|)^{-4}\|a\|^3_{S^{r_0,R_0}_{m_0}}.
    \]
  \end{enumerate}
\end{prop}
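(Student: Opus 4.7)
The plan is to prove the algebra structure (1) by a direct combinatorial estimate, then derive the invertibility (2) via a Neumann series, relying crucially on the fact that enlarging $R$ shrinks the norm of a symbol whose order-zero term vanishes.

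For (1), the Leibniz rule (Proposition \ref{prop:prod-Cj}) combined with the symbol bounds gives
\[
\|(a*b)_k\|_{C^j(X)} \leq \|a\|\|b\|\, r^j R^k\, T_0(j,k),\qquad T_0(j,k)=\sum_{i=0}^k\sum_{\ell=0}^j \binom{j}{\ell}\frac{(i+\ell)!(j+k-i-\ell)!}{(i+\ell+1)^m(j+k-i-\ell+1)^m},
\]
and the task is to show $T_0(j,k)\leq C_0(j+k)!/(j+k+1)^m$ for a universal $C_0$. The naive bound $\binom{j}{\ell}\leq 2^j$ is too wasteful in $j$; instead I would use the inequality $\binom{j+k}{i+\ell}\geq \binom{j}{\ell}\binom{k}{i}$, which follows from Vandermonde's identity by retaining a single summand, to obtain
\[
\binom{j}{\ell}(i+\ell)!(j+k-i-\ell)! = \frac{\binom{j}{\ell}(j+k)!}{\binom{j+k}{i+\ell}} \leq \frac{(j+k)!}{\binom{k}{i}}.
\]
After substitution, the sum over $\ell$ at fixed $i$, reparametrised by $s=i+\ell$, is controlled by Lemma \ref{prop:lem-easy} with $d=0$, yielding a constant times $(j+k+1)^{-m}$; the residual $\sum_{i=0}^k \binom{k}{i}^{-1}$ is bounded by $3$ uniformly in $k$. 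For the $O(e^{-cN})$ comparison, I would rewrite $a(N)b(N)-(a*b)(N)$ as the tail sum over $\{k+\ell>c_R N,\ k,\ell\leq c_R N\}$, each summand being dominated by $\|a\|\|b\|(e/3)^{k+\ell}$ exactly as in Proposition \ref{prop:sum-anal-symb}(2), with summation over this range giving exponential decay.

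For (2), I would factor $a = a_0 * (1+b)$, where $a_0$ is identified with the order-zero symbol $(a_0,0,0,\dots)$ and $b:=a_0^{-1}*(a-a_0)$ satisfies $b_0=0$ and $b_k=a_k/a_0$ for $k\geq 1$. Proposition \ref{prop:alg-holo-func} bounds $\|a_0^{-1}\|$ by $\|a\|/(\min|a_0|)^2$, and part (1) then yields $\|b\|_{S^{r_0,R_0}_{m_0}} \leq C_0\|a\|^2/(\min|a_0|)^2$. The crucial observation is that $b_0=0$ permits trading one factor of $R$ for smallness of the norm: since $(R_0/R)^k\leq R_0/R$ for $k\geq 1$, one has $\|b\|_{S^{r_0,R}_{m_0}}\leq (R_0/R)\,\|b\|_{S^{r_0,R_0}_{m_0}}$ for every $R\geq R_0$. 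Choose $R$ large enough that $C_0\|b\|_{S^{r_0,R}_{m_0}}\leq 1/2$; iterating part (1) gives $\|b^{*K}\|_{S^{r_0,R}_{m_0}}\leq 2^{-K}/C_0$, so the Neumann series $\sum_{K\geq 0}(-b)^{*K}$ converges absolutely in $S^{r_0,R}_{m_0}$ to $(1+b)^{*-1}$. Then $a^{*-1}:=a_0^{-1}*(1+b)^{*-1}$ is the required inverse, and composing the norm estimates yields the stated bound $\|a^{*-1}\|\leq 2(\min|a_0|)^{-4}\|a\|^3$; Proposition \ref{prop:move-m-r-R} transfers the estimate to any $S^{r,R}_m(X)$ with the parameters specified in the statement. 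The main obstacle is the combinatorial control of $T_0(j,k)$ in (1): because $j$ is unbounded, any factor growing in $j$ would be fatal, and the Vandermonde-type inequality above is exactly what cancels $\binom{j}{\ell}$ against the factorial quotient and reduces the problem to the summable residue $\binom{k}{i}^{-1}$ and the scalar estimate from Lemma \ref{prop:lem-easy}.
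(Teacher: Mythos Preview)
Your argument for part (1) is correct and is a close variant of the paper's. The paper uses the simpler monotonicity $\binom{j}{\ell}\leq\binom{j+k}{i+\ell}$ (rather than Vandermonde), cancels the binomial, reparametrises by $i'=i+\ell$, and absorbs the multiplicity $\min(i'+1,j+k-i'+1)$ of pairs $(i,\ell)$ into Lemma~\ref{prop:lem-easy} with $d=1$. Your Vandermonde bound pulls out $\binom{k}{i}^{-1}$ instead, allowing Lemma~\ref{prop:lem-easy} with $d=0$ at the price of the harmless residual $\sum_i\binom{k}{i}^{-1}\leq 3$. Both work; the paper's version is what justifies the hypothesis $m\geq 4$ in the statement.

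For part (2) you take a genuinely different route. The paper does not use a Neumann series: it writes the recursion $b_k=a_0^{-1}\sum_{i=1}^k a_i b_{k-i}$ and proves $\|b_k\|_{C^j}\leq C_b\,r^jR^k(j+k)!/(j+k+1)^m$ by induction on $k$, estimating the triple Leibniz expansion via Lemma~\ref{prop:lem-hard} with $n=3$. The crucial observation there is that exactly one term in that expansion (namely $a_0^{-1}a_kb_0$) fails to pick up the factor $(3/4)^m$; that term is bounded independently of $C_b$, and all remaining terms carry $(3/4)^m$, so for $m$ large the induction closes. This is why the result holds for the stated range $r\geq r_0 2^{m-m_0}$, $R\geq R_0 2^{m-m_0}$ with $m$ large: the smallness comes from $m$, not from $R$.

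Your Neumann series is cleaner, but your transfer step at the end does not give the parameter range in the statement. You construct $a^{*-1}\in S^{r_0,R^*}_{m_0}$ for some $R^*>R_0$ depending on $a$; Proposition~\ref{prop:move-m-r-R} then yields $a^{*-1}\in S^{r,R}_m$ only for $R\geq R^*2^{m-m_0}$, not for $R\geq R_0 2^{m-m_0}$. Enlarging $m$ does not help: with $r=r_0 2^{m-m_0}$, $R=R_0 2^{m-m_0}$ one has $\|b\|_{S^{r,R}_m}\leq\|b\|_{S^{r_0,R_0}_{m_0}}$ but no better, since the $j=0$, $k=1$ coefficient gives $(j+k+1)^{m-m_0}/2^{(m-m_0)(j+k)}=1$. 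So the trick ``$b_0=0$ lets $R\to\infty$ shrink $\|b\|$'' produces invertibility in \emph{some} analytic class, which is morally the content, but not in the precise class asserted. The paper's inductive scheme, exploiting $(3/4)^m$, is also the template reused later in Propositions~\ref{prop:cov-compo} and~\ref{prop:cov-inv}.
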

    \begin{proof}\fixitemthm
      \begin{enumerate}
        \item
      From Proposition \ref{prop:prod-Cj}, one has, for every $0\leq i
      \leq k $ and $j\geq 0$,
      \[
        \|a_ib_{k-i}\|_{C^j}\leq
        \sum_{\ell=0}^j\binom{j}{\ell}\|a_i\|_{C^{\ell}}\|b_{k-i}\|_{C^{j-\ell}}.\]
        In particular,
        \begin{multline*}
          \|(a*b)_k\|_{C^j}\leq
          \|a\|_{S^{r,R}_m}\|b\|_{S^{r,R}_m}\cfrac{r^jR^k(j+k)!}{(j+k+1)^m}\\
            \times
            \sum_{i=0}^k\sum_{\ell=0}^j\binom{j+k}{i+\ell}^{-1}\binom{j}{\ell}\cfrac{(j+k+1)^m}{(i+\ell+1)^m(j+k-i-\ell+1)^m}.
          \end{multline*}
        Since,
        \[
          \binom{j}{\ell}\leq \binom{j+i}{\ell+i}\leq
          \binom{j+k}{\ell+i},
          \]
        one has
          \begin{align*}
          \|(a*b)_k\|_{C^j}\leq
                             \|a\|_{S^{r,R}_m}&\|b\|_{S^{r,R}_m}\cfrac{r^jR^k(j+k)!}{(j+k+1)^m}\\
            \times \sum_{i=0}^k\sum_{\ell=0}^j&\cfrac{(j+k+1)^m}{(i+\ell+1)^m(j+k-i-\ell+1)^m}\\
            \leq
            \|a\|_{S^{r,R}_m}&\|b\|_{S^{r,R}_m}\cfrac{r^jR^k(j+k)!}{(j+k+1)^m}\\
            \times \sum_{i'=0}^{k+j}&\cfrac{\min(i'+1,j+k-i'+1)(j+k+1)^m}{(i'+1)^m(j+k-i'+1)^m},
          \end{align*}
          where $i'=i+\ell$.
        We are
        reduced to Lemma \ref{prop:lem-easy} with $d=1$. If $m\geq 4$, this sum is smaller than a universal constant $C$
        independently of $j,k$, so that
        \[
          \|a*b\|_{S^{r,R}_m}\leq C
          \|a\|_{S^{r,R}_m}\|b\|_{S^{r,R}_m}.
        \]

        Let us control the product of the associated analytic
        series. By Proposition \ref{prop:sum-anal-symb}, for some
        $c>0$ depending only on $R$, one has
        \[
          a(N)=\sum_{k=0}^{\frac{eN}{12R}}N^{-k}a_k+O(e^{-cN}),
        \]
        and similar controls for $b(N)$ and $(a* b)(N)$.

        The first $\cfrac{eN}{12R}$ terms of the expansion in
        decreasing powers of $(a*b)(N)$ and $a(N)b(N)$ then coincide by
        definition of the Cauchy product. It remains to control
        \[
          \sum_{\frac{eN}{12R}\leq i+j\leq
            \frac{eN}{6R}}N^{-(i+j)}a_ib_j.
        \]
        From
        \[
          \sup(|a_ib_j|)\leq CR^{i+j}i!j!\leq C(2R)^{i+j}(i+j)!,
        \]
         one has, as in Proposition \ref{prop:sum-anal-symb},
        \[
          \left|\sum_{\frac{eN}{12R}\leq i+j\leq
            \frac{eN}{6R}}N^{-(i+j)}a_ib_j\right|\leq \sum_{\frac{eN}{12R}\leq i+j\leq
            \frac{eN}{6R}}N^{-(i+j)}(2R)^{i+j}(i+j)!\leq e^{-cN},
        \]
        hence the claim.

        \item
        The unit element of the Cauchy product is $(1,0,0,\ldots)$, which belongs to $S^{r,R}_m(X)$. Let
        $a\in S^{r_0,R_0}_{m_0}(X)$ be such that $a_0$ does not vanish on $X$, and let
        us try to find $b$ such that $(a* b)_0=1$ and $(a*
        b)_k=0$ whenever $k\neq 0$.

        The first condition yields $b_0=a_0^{-1}$, which is a function
        with real-analytic regularity and same radius as $a_0$, by
        Proposition \ref{prop:alg-holo-func}, so that
        \[
          \|b_0\|_{C^j}\leq C_0 \cfrac{r_0^jj!}{(j+1)^{m_0}}.
        \]
        In particular, by Lemma \ref{prop:move-m-r}, for all $m\geq
        m_0,r\geq r_02^{m-m_0}$, one has
        \[
          \|b_0\|_{C^j}\leq C_0 \cfrac{r^jj!}{(j+1)^{m}}.
          \]
        
        The coefficients $b_k$ are then determined by induction:
        \[
          b_k=a_0^{-1}\sum_{i=1}^{k}a_ib_{k-i}=b_0\sum_{i=1}^ka_ib_{k-i}.
        \]
        Let us control $\|b\|_{S^{r,R}_m(X)}$ by
        $\|a\|_{S^{r,R}_m(X)}$ by induction, for some $r,R,m$ which
        will be chosen later.
        
        We now proceed by induction on $k$. Suppose that, for all
        $\ell\leq k-1$ and $j\geq 0$,
        one has
        \[
          \|b_{\ell}\|_{C^j}\leq C_b
          \cfrac{r^jR^{\ell}(j+\ell)!}{(j+\ell+1)^m},
        \]
        We wish to prove the same control for $\ell=k$.
        The constant $C_b$ will be chosen later.

        By induction hypothesis,
        \begin{multline*}
          \|b_k\|_{C^j}\leq
          C_0C_b\|a\|_{S^{r,R}_m}\sum_{j_1=0}^j\sum_{i=1}^k\sum_{j_2=0}^{j-j_1}
                         \binom{j}{j_1,j_2}\cfrac{r^{j_1}j_1!}{(j_1+1)^m}\\
                         \times\cfrac{r^{j_2}R^{i}(j_2+i)!r^{j-j_1-j_2}R^{k-i}(j-j_1-j_2+k-i)!}{(i+j_2+1)^m(j-j_1-j_2+k-i+1)^m}\\
          \leq
            C_bC_0\|a\|_{S^{r,R}_m}\cfrac{r^jR^k(j+k)!}{(j+k+1)^m}\hspace{-0.2em}\sum_{j_1=0}^j\sum_{i=1}^k\sum_{j_2=0}^{j-j_1}\binom{j}{j_1,j_2}\binom{j+k}{j_1,j_2+i}^{-1}\\\times\cfrac{(j+k+1)^m}{(j_1+1)^m(j_2+i+1)^m(j-j_1-j_2+k-i+1)^m}.
          \end{multline*}
          Let us prove that, for every $i,j,j_1,j_2,k$ in the range
          above, one has
          \[
            \binom{j+k}{j_1,j_2+i}\geq \binom{j}{j_1,j_2}.
          \]
          There holds
          \[
            \binom{j+1}{j_1,j_2+1}=\binom{j}{j_1,j_2}\cfrac{j+1}{j-j_1-j_2}\geq
            \binom{j}{j_1,j_2},
          \]
          so that
          \[
            \binom{j+k}{j_1,j_2+i}\geq \binom{j+i}{j_1,j_2+i}\geq
            \binom{j}{j_1,j_2}.\]
          Hence,
          \begin{align*}
            \|b_k\|_{C^j}
            \leq
              C_bC_0&\|a\|_{S^{r,R}_m}\cfrac{r^jR^k(j+k)!}{(j+k+1)^m}\\
            \times \sum_{j_1=0}^j\sum_{i=1}^k\sum_{j_2=0}^{j-j_1}&\cfrac{(j+k+1)^m}{(j_1+1)^m(j_2+i+1)^m(j-j_1-j_2+k-i+1)^m}\\
\leq C_bC_0\|a\|_{S^{r,R}_m}&\cfrac{r^jR^k(j+k)!}{(j+k+1)^m}\\ &\times \sum_{\substack{j_1+i_1+i_2=j+k\\i_1\geq 1}}\cfrac{\min(i_1+1,i_2+1)(j+k+1)^m}{(j_1+1)^m(i_1+1)^m(i_2+1)^m}
            .
          \end{align*}
          From Lemma \ref{prop:lem-hard} with $n=3$ and $d=1$, the sum
        \[
         \sum_{\substack{j_1+i_1+i_2=j+k\\i_1\geq 1}}\cfrac{\min(i_1+1,i_2+1)(j+k+1)^m}{(j_1+1)^m(i_1+1)^m(i_2+1)^m}
        \]
        is bounded independently of $j$ and $k$ for $m\geq 6$. However
        this control is not enough since it yields a constant in front of
        $\cfrac{r^jR^k(j+k)!}{(j+k+1)^m}$  which is a priori
        $CC_0C_b\|a\|_{S^{r,R}_m}\geq C_b$.

        However, the only term in this expansion which contributes
        as $1$ is $j_1=0,i_1=k+j,i_2=0$, which corresponds to $j_1=0,i=k,j_2=j$.
        One can control this term independently of $C_b$ since \[
          |a_0^{-1}|\|a_k\|_{C^j}|b_0|\leq
        C_0^2\|a\|_{S^{r_0,R_0}_{m_0}}\cfrac{r^jR^k(j+k)!}{(j+k+1)^m}.
      \]
      The sum over all other terms is smaller than $CC_bC_0\|a\|_{S^{r,R}_m}(3/4)^{m}$ for some
      $C$, by Lemma \ref{prop:lem-hard}.

      We can conclude: if $m$ is large with respect to
      $\|a\|_{S^{r,R}_m}$ (which can be done using Proposition
      \ref{prop:move-m-r-R} by setting $r\geq r_02^{m-m_0}$ and
      $R\geq R_02^{m-m_0}$) and if $C_b\geq
      2C_0^2\|a\|_{S^{r_0,R_0}_{m_0}}$, where we recall from Proposition \ref{prop:alg-holo-func} that \[C_0^2=\min(|a_0|)^{-4}\|a\|_{S^{r_0,R_0}_{m_0}}^2,\] one has, by induction,
      \[
        \|b_k\|_{C^j}\leq C_b\cfrac{r^jR^k(j+k)!}{(j+k+1)^m}.
      \]
      This concludes the proof.
    \end{enumerate}
  \end{proof}

      \begin{rem}
        The method of proof for Proposition
        \ref{prop:anal-symb-class} will be used again in Section
        \ref{sec:szego-kernel-general}. This method consists in an induction,
        in which quotients of factorials must be bounded; this reduces
        the control by induction to Lemma
        \ref{prop:lem-hard}. Constants which appear must be carefully
        chosen so that the induction can proceed. In particular, given
        a fixed object in an analytic class, it will be useful to
        change the parameters (typically $m,r,R$) in its control,
        while maintaining a fixed norm.
      \end{rem}

The classes $H(m,r,V)$ of real-analytic functions introduced in Section
\ref{sec:holom-extens} contain all holomorphic functions. In a similar
manner, the symbol classes $S^{r,R}_m$ contain all classical analytic symbols in
the sense of Sj\"ostrand \cite{sjostrand_singularites_1982}.
\begin{prop}\label{prop:symb-change-var}
  Let $U$ be an open set of $\C^n$ and let $a=(a_k)_{k\geq 0}$ be a
  sequence of
  bounded holomorphic functions on $U$ such that there exists $C>0$
  and $R>0$
  satisfying, for all $k\geq 0$,
  \[
    \sup_U|a_k|\leq CR^kk!.
  \]
  Then for every $V\subset\subset U$ there exists $r>0$ such that
  $a\in S^{r,R}_0(V)$.

  In particular, given an analytic symbol $a$  and an analytic change
  of variables
  $\kappa$, then $a\circ \kappa$ is an analytic symbol.
\end{prop}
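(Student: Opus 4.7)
The plan is to deduce the symbol bound directly from Cauchy's formula applied to each $a_k$ individually, then to reduce the change-of-variables statement to the first assertion via holomorphic extension.

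For the first assertion, I would fix $V\subset\subset U$ and choose $T>0$ so that for every point $v\in V$ the polydisc $P(v,2T)$ lies in $U$. Applying Proposition \ref{prop:Cauchy} (centered at each such $v$, after translation) and taking the supremum over $v\in V$ yields, uniformly in $k$,
\[
\sup_V\|\nabla^j a_k\|_{\ell^1}\leq C(j+1)^n(n/T)^j j!\,\sup_U|a_k|\leq C'(j+1)^n(n/T)^j j!\,R^k k!.
\]
Using the elementary inequality $j!\,k!\leq (j+k)!$ and setting $r=2n/T$, one has $(n/T)^j=r^j 2^{-j}$ and $\sup_{j\geq 0}(j+1)^n 2^{-j}<+\infty$, so the bound becomes $\|a_k\|_{C^j(V)}\leq C''r^j R^k(j+k)!$, which is exactly the condition defining $S^{r,R}_0(V)$.

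For the change-of-variables statement, given $a\in S^{r_0,R_0}_{m_0}(X)$ and an analytic change of variables $\kappa$, I would first extend each $a_k$ holomorphically to a fixed complex neighborhood $\widetilde X$ of $X$. The uniform bound $\sup_X\|\nabla^j a_k\|_{\ell^1}\leq C r_0^j R_0^k(j+k)!$ implies that the Taylor series of $a_k$ at any point of $X$ converges on a ball of radius $\rho>0$ independent of $k$, so the extension $\widetilde a_k$ is defined on a uniform tubular neighborhood and satisfies $\sup_{\widetilde X}|\widetilde a_k|\leq C'R_0^k k!$. Similarly $\kappa$ extends to a biholomorphism $\widetilde\kappa$ between complex neighborhoods; after shrinking the domain we may assume the range of $\widetilde\kappa$ lies in $\widetilde X$. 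The composition $\widetilde a_k\circ\widetilde\kappa$ is then a bounded holomorphic sequence on a complex neighborhood of $\kappa^{-1}(X)$, with the same factorial estimate. Applying the first part of the proposition to this family delivers the conclusion.

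The only technical point is the absorption of the factor $(j+1)^n$ coming from multivariable Cauchy estimates into $r^j$; this is the reason one loses all polynomial gain in $(j+k+1)^m$ when restricting from $U$ to $V\subset\subset U$, which forces the exponent $m=0$ in the conclusion. Beyond this bookkeeping, no real obstacle is lurking: the class of holomorphic functions with sup norm $\lesssim R^k k!$ is trivially stable under composition with biholomorphisms, and Cauchy converts sup bounds to derivative bounds at a uniform cost.
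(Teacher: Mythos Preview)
Your proposal is correct and follows essentially the same route as the paper: apply Proposition~\ref{prop:Cauchy} (Cauchy estimates) to pass from sup bounds to derivative bounds, use $j!\,k!\leq (j+k)!$, and then for the change of variables invoke the holomorphic extension and the invariance of sup bounds under biholomorphisms before reapplying the first part. You are in fact more explicit than the paper about absorbing the polynomial factor $(j+1)^n$ into $r^j$ by doubling $r$; the paper simply states ``there exists $r>0$'' without spelling this out.
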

\begin{proof}
  By Proposition \ref{prop:Cauchy}, there exists $C_1>0$ and $r>0$
  such that, for every $k\geq 0$, one has $a_k\in H(r,0,V)$ with
  \[
    \|a_k\|_{H(0,r,V)}\leq C_1\sup_U|a_k|.
  \]
  In other terms, for every $k\geq 0,j\geq 0$, one has
  \[
    \|a_k\|_{C^{j}(V)}\leq C_1Cr^jR^kj!k!\leq C_1Cr^jR^k(j+k)!.
  \]
  Hence $a\in S^{r,R}_0(V)$.

  By a power series expansion, an analytic symbol $a$ satisfies, on some holomorphic
  extension $U$ of its domain of definition,
  \[
    \sup_U|\tilde{a}_k|\leq CR^kk!.
  \]
  This control is not affected by application of the biholomorphism
  $\tilde{\kappa}$, so that
  \[
    \sup_{\tilde{\kappa}^{-1}(U)}|\tilde{a}_k\circ \tilde{\kappa}|\leq
      CR^kk!.
    \]
    By the lines above, $a\circ\kappa$ is an analytic symbol.
\end{proof}

\subsection{Complex stationary phase lemma}
\label{sec:compl-stat-phase}
In this subsection we present the tools of stationary phase in the
context of real-analytic regularity, as developed by Sj\"ostrand \cite{sjostrand_singularites_1982}.
We wish to study integrals of the form

\[
  \int_{\Omega}e^{N\Phi(x)}a(x)\dd x,
\]
as $N\to +\infty$. If $\Phi$ is purely
imaginary, then by integration by parts, this
integral is $O(N^{-\infty})$ away from the points where $d\Phi$
vanishes. At such points, if $\Phi$ is Morse, a change of variables
leads to the usual case where $\Phi$ is quadratic nondegenerate; then
there is a full expansion of the integral in decreasing powers of
$N$. If $\Phi$ is real-valued, a similar analysis (Laplace method)
yields a related expansion.

On one hand, we wish to study such an integral, in the more general case where $i\Phi$
is complex-valued. On the other hand we want to improve the
$O(N^{-\infty})$ estimates into $O(e^{-cN})$. This is done via a
complex change of variables; to this end we have to impose real-analytic
regularity on $\Phi$ and $a$.

Let us introduce a notion of analytic phase, which generalises
positive phase functions as appearing in \cite{sjostrand_singularites_1982}.

\begin{defn}\label{def:phasefunc}
  Let $d,k\in \N$. Let $\Omega$ be a domain of $\R^d$.
  Let $\Phi$ be
  a real-analytic function on $\Omega\times \R^k$. For each
  $\lambda\in \R^k$ we let $\Phi_{\lambda}=\Phi(\cdot,\lambda)$. Then $\Phi$ is said to be an \emph{analytic phase} on $\Omega$ under the following
  conditions.
  \begin{itemize}
  \item There exists an open set $\widetilde{\Omega}\subset \C^d$ such
    that, for every $\lambda\in \R^k$, the function $\Phi_{\lambda}$ extends to a holomorphic
    function $\widetilde{\Phi}_{\lambda}$ on 
    $\widetilde{\Omega}$.
  \item For every $\lambda\in \R^k$, there exists exactly one point $\tilde{x}_{\lambda}\in \widetilde{\Omega}$
    such that $d\widetilde{\Phi}_{\lambda}(\tilde{x}_{\lambda})=0$;
    this critical point is non-degenerate, with
    $\widetilde{\Phi}_{\lambda}(\widetilde{x}_{\lambda})=0$.
  \item One has $\widetilde{x}_0=0$ and moreover $\Re\Phi_0< 0$ on
    $\Omega\setminus \{0\}.$
  \end{itemize}
\end{defn}

Under the conditions of Definition \ref{def:phasefunc}, the function
$\lambda\mapsto \widetilde{x}_{\lambda}$ is real-analytic.

We now recall the stationary phase lemma in analytic regularity.
\begin{prop}\label{prop:HSPL}\cite{sjostrand_singularites_1982,hitrik_two_2013}
  Let $\Phi$ be an analytic phase on a domain $\Omega$. There exists
  $c>0,c'>0,C'>0$, a neighbourhood $\Lambda\subset \R^k$ of zero, and
  a biholomorphism $\widetilde{\kappa}_{\lambda}$, with real-analytic
  dependence\footnote{By this we mean: a real-analytic function
    $\kappa$ on $U\times \Lambda$, where $U$ is a
    neighbourhood of $0$ in $\widetilde{\Omega}$, holomorphic in the
    first variable, such that there exists $\sigma$ with
    the same properties, satisfying
    $\sigma(\kappa(x,\lambda),\lambda)=\kappa(\sigma(x,\lambda),\lambda)=x$
  for all $(x,\lambda)\in U\times \Lambda$.}
  on $\lambda\in \Lambda$, such that the associated Laplace operator
  $\widetilde{\Delta}(\lambda)=\kappa_{\lambda}\circ\Delta\circ\kappa_{\lambda}^{-1}$ satisfies, for every
  function $a_{\lambda}$ holomorphic on $\widetilde{\Omega}$:
  \[
    \int_{\Omega}e^{N\Phi_{\lambda}}a_{\lambda}=\sum_{k=0}^{cN}\left(k!N^{\frac
        d2+k}\right)^{-1}\widetilde{\Delta}(\lambda)^k(\widetilde{a}_{\lambda}J^{-1}_{\lambda})(\widetilde{x}_{\lambda})+R_{\lambda}(N),
  \]
  where, uniformly in $\lambda\in \Lambda$,
  \[
    |R_{\lambda}(N)|\leq
    Ce^{-c'N}\sup_{\widetilde{\Omega}}|\widetilde{a}_{\lambda}|,
  \]
  and $J_{\lambda}$ is the Jacobian determinant associated with the
  change of variables. 
\end{prop}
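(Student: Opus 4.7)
The plan is to follow the standard complex stationary phase recipe (as in \cite{sjostrand_singularites_1982}), producing first a holomorphic normal form for the phase, then deforming the contour through the critical point, and finally reading off the asymptotic expansion from a Gaussian integral; the only point where real-analytic regularity is essential is the \emph{truncation} of the expansion at the linear index $k=cN$ with exponentially small remainder, which is what distinguishes this statement from the smooth stationary phase lemma.

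\textbf{Step 1 (holomorphic Morse lemma with parameters).} By Definition~\ref{def:phasefunc}, $\widetilde{\Phi}_\lambda$ has a single non-degenerate critical point $\widetilde{x}_\lambda$ depending real-analytically on $\lambda$, with critical value $0$. A standard holomorphic Morse lemma with parameters then yields, for $\lambda$ in a small neighbourhood $\Lambda$ of $0$, a biholomorphism $\widetilde{\kappa}_\lambda$ from a neighbourhood of $0$ in $\C^d$ to a neighbourhood of $\widetilde{x}_\lambda$ in $\widetilde{\Omega}$, holomorphic in the first variable, real-analytic in $\lambda$, and such that
\[
\widetilde{\Phi}_\lambda\bigl(\widetilde{\kappa}_\lambda(y)\bigr)=-\tfrac{1}{2}\langle Q_\lambda y,y\rangle
\]
for a complex symmetric non-degenerate matrix $Q_\lambda$ whose real part can be arranged to be positive definite (up to a linear change of coordinates in $y$). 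Let $J_\lambda$ denote the Jacobian of $\widetilde{\kappa}_\lambda$ at $0$, more generally the Jacobian $\det D\widetilde{\kappa}_\lambda$.

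\textbf{Step 2 (contour deformation).} The assumption $\Re\Phi_0<0$ on $\Omega\setminus\{0\}$ together with the non-degeneracy of the critical point lets us choose, by continuity in $\lambda$, a small neighbourhood $W_\lambda$ of $\widetilde{x}_\lambda$ and a constant $\delta>0$ such that $\Re\widetilde{\Phi}_\lambda\le -\delta$ on $\Omega\setminus \kappa_\lambda(W_\lambda)$. Using Cauchy's theorem and the holomorphy of $\widetilde{\Phi}_\lambda$ and $\widetilde{a}_\lambda$, we deform $\Omega$ into a contour $\Gamma_\lambda\subset\widetilde{\Omega}$ passing through $\widetilde{x}_\lambda$, along which $\Re\widetilde{\Phi}_\lambda\le -\delta$ outside $W_\lambda$. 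The contribution of $\Gamma_\lambda\setminus W_\lambda$ is $O(e^{-\delta N}\sup|\widetilde{a}_\lambda|)$, and the contribution of $\Gamma_\lambda\cap W_\lambda$ can, after applying $\widetilde{\kappa}_\lambda^{-1}$, be compared with the Gaussian integral over all of $\R^d$ up to another error of the same type. We are reduced to
\[
I_\lambda(N)=\int_{\R^d}e^{-\tfrac{N}{2}\langle Q_\lambda y,y\rangle}\,\bigl(\widetilde{a}_\lambda J_\lambda^{-1}\bigr)\!\circ\!\widetilde{\kappa}_\lambda^{-1}(y)\,dy+O(e^{-\delta N}\sup|\widetilde{a}_\lambda|).
\]

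\textbf{Step 3 (Gaussian moments).} Expand $g_\lambda(y)=(\widetilde{a}_\lambda J_\lambda^{-1})\circ\widetilde{\kappa}_\lambda^{-1}(y)$ in Taylor series at $0$ up to order $2K-1$. The Gaussian moment formula gives, for each even monomial of total degree $2k$, a contribution proportional to $N^{-d/2-k}$ whose combinatorial factor is precisely the action of the differential operator $\Delta_{Q_\lambda}:=\tfrac12\langle Q_\lambda^{-1}\partial_y,\partial_y\rangle$. Recollecting these contributions yields
\[
I_\lambda(N)=\sum_{k=0}^{K-1}\frac{1}{k!\,N^{d/2+k}}\,\Delta_{Q_\lambda}^k g_\lambda(0)\;+\;\mathrm{Rem}_K(N).
\]
Unwinding the change of variables, $\Delta_{Q_\lambda}^k g_\lambda(0)=\widetilde{\Delta}(\lambda)^k(\widetilde{a}_\lambda J_\lambda^{-1})(\widetilde{x}_\lambda)$, where $\widetilde{\Delta}(\lambda)$ is, as in the statement, the conjugate of $\Delta_{Q_\lambda}$ by $\widetilde{\kappa}_\lambda$.

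\textbf{Step 4 (truncation with exponential remainder).} This is the only delicate point. Since $\widetilde{a}_\lambda$ is holomorphic and bounded on $\widetilde{\Omega}$, Cauchy estimates (Proposition~\ref{prop:Cauchy}) give $|\partial^\alpha g_\lambda(0)|\le C R^{|\alpha|}\alpha!\sup|\widetilde{a}_\lambda|$ for some $R$ depending only on the geometry of $\widetilde{\Omega}$ and of $\widetilde{\kappa}_\lambda$, uniformly for $\lambda\in\Lambda$. Combining this with the standard estimate of the Taylor remainder against Gaussian moments gives
\[
|\mathrm{Rem}_K(N)|\le C\,\frac{(CR)^{2K}(2K)!}{(2K)!\,N^{d/2+K}}\sup|\widetilde{a}_\lambda|\le C\frac{K!\,R_1^K}{N^{d/2+K}}\sup|\widetilde{a}_\lambda|,
\]
up to the absorbed Gaussian factor. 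By Stirling, choosing $K=\lfloor cN\rfloor$ with $c<e/R_1$ yields $K!\,R_1^K N^{-K}\le (cR_1/e)^K\le e^{-c'N}$ for some $c'>0$. Together with the exponentially small contour error of Step~2, this produces the claimed bound
\[
\Bigl|R_\lambda(N)\Bigr|\le C\,e^{-c'N}\sup_{\widetilde{\Omega}}|\widetilde{a}_\lambda|,
\]
uniformly in $\lambda\in\Lambda$. The main obstacle is precisely this last step: one must choose the truncation index proportional to $N$ with a constant calibrated to the analytic radius, and verify that all remainders produced along the way (Morse normal form, contour deformation, Gaussian moment truncation) are simultaneously of size $O(e^{-c'N})$ with uniform constants in $\lambda$.
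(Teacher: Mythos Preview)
The paper does not prove this proposition at all: it is stated with citations to \cite{sjostrand_singularites_1982,hitrik_two_2013} and used as a black box. Your sketch follows exactly the classical Sj\"ostrand strategy from those references (holomorphic Morse lemma with parameters, contour deformation to a steepest-descent contour, Gaussian expansion, then truncation at $k\approx cN$ with Stirling), so there is nothing to compare against and your outline is the expected one.
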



\section{Calculus of covariant Toeplitz operators}
\label{sec:szego-kernel-general}

In this section we prove our three main theorems.

We begin in Subsection \ref{sec:covar-toepl-oper} with the definition,
and the first properties, of covariant Toeplitz operators. Then, in
Subsections \ref{sec:study-phase-function-1} to
\ref{sec:invers-covar-toepl}, we study them. We prove that they can be
composed (Proposition \ref{prop:cov-compo}), and inverted (Propositions
\ref{prop:cov-inv} and \ref{prop:pre-inv}), with a precise control on the analytic classes involved. This
allows us to prove Theorem \ref{thr:Szeg-gen}: see the beginning of Section \ref{sec:invers-covar-toepl} for
a detailed proof strategy for Theorems \ref{thr:Szeg-gen} and \ref{thr:Compo}.
To conclude, in Subsection \ref{sec:expon-decay-low} we
prove Theorem \ref{thr:Gen-exp-decay}.

Until the end of Section \ref{sec:szego-kernel-general}, $M$ is a compact
real-analytic quantizable K\"ahler manifold of dimension $d$.

\subsection{Covariant Toeplitz operators}
\label{sec:covar-toepl-oper}

\begin{defn}\label{def:Anal-Toep}
 Let $U$ denote a small, smooth neighbourhood of the codiagonal in
  $M\times M$; for instance $U=\{(x,y)\in
  M\times M, \dist(x,y)<\epsilon\}$ with $\epsilon$ small enough so that
  the section $\Psi^N$ of Definition \ref{def:natural-section} is
  defined on a neighbourhood of $U$.
The space $T^{-,r,R}_m(U)$ of \emph{covariant analytic
        Toeplitz operators} consists of operators with kernel
      \[
        T_N^{cov}(f):(x,y)\mapsto N^{d}\mathbb{1}_{(x,y)\in U}\Psi^N(x,y)f(N)(x,y),
      \]
      where $f(N)$ is the summation of an analytic symbol $f\in
      S^{r,R}_m(U)$, with $f$ holomorphic in the first variable and
      anti-holomorphic in the second variable.
    \end{defn}

    \begin{rem}\label{prop:cov-close-H_N}
      Since $\Psi^N$ is exponentially small near the boundary of $U$,
      by Proposition \ref{prop:Kohn}, the image of a covariant
      Toeplitz operator is exponentially close to its projection on
      $H^0(M,L^{\otimes N})$.
    \end{rem}

\subsection{Study of an analytic phase}
\label{sec:study-phase-function-1}

In this work, covariant Toeplitz operators of Definition
\ref{def:Anal-Toep} have the following integral kernels:
\[
  T_N^{cov}(f):(x,y)\mapsto
  \Psi^N(x,y)\left(\sum_{k=0}^{cN}N^{d-k}f_k(x,y)\right).
\]
The integral kernel of the composition of two covariant Toeplitz  is of particular
interest, so let us study its phase. 
 
If $f$ and $g$ are analytic symbols, then
$T_N^{cov}(f)T_N^{cov}(g)$ has the following kernel:
\begin{multline*}
  (x,z)\mapsto
  \Psi^N(x,z)\int_Me^{N(2\widetilde{\phi}(x,y)-2\phi(y)+2\widetilde{\phi}(y,z)-2\widetilde{\phi}(x,z))}\\
  \times \left(\sum_{k=0}^{cN}N^{d-k}f_k(x,y)\right)\left(\sum_{j=0}^{cN}N^{d-j}g_j(y,z)\right)\dd
  y.
\end{multline*}
We let $\Phi_1$ be the complex extension (with respect to the middle
variable) of the phase appearing in the last formula:
\[
  \Phi_1:(x,y,\overline{w},\overline{z})\mapsto
  2\widetilde{\phi}(x,\overline{w})-2\widetilde{\phi}(y,\overline{w})+2\widetilde{\phi}(y,\overline{z})-2\widetilde{\phi}(x,\overline{z}).
\]
We write $\Phi_1(x,y,\overline{w},\overline{z})$ to indicate
anti-holomorphic dependence on the two last variables. In
particular, $\Phi_1$ is holomorphic on the open set $U\times U$ of $M\times
\widetilde{M}\times \overline{M}=M_x\times (M_y\times
\overline{M}_{\overline{w}})\times \overline{M}_{\overline{z}}$.

The fact that $\Phi_1$ is a well-behaved phase function is well-known;
let us state it in this real-analytic context.
\begin{prop}
  \label{prop:Phase}
  There exists a smooth neighbourhood $U$ of \[\{(x,\overline{z}) \in
  M\times \overline{M}, \overline{x}=\overline{z}\}\] such that
  function $\Phi_1$, on the open set \[\{(x,y,\overline{y},\overline{z}),
  (x,\overline{w})\in U, (y,\overline{w})\in U,(x,\overline{z})\in U\},\] is an analytic phase of $(y,\overline{w})$, with parameter $\lambda=(x,\overline{z})$. The
  critical point is $(x,\overline{z})$.

  In particular, after a trivialisation of a tubular neighbourhood of
  \[\{(x,y,\overline{w},\overline{z})\in M\times\widetilde{M}\times
    \overline{M}, (x,\overline{z})\in U, (y,\overline{w})=(x,\overline{z})\}\] in \[\{(x,y,\overline{w},\overline{z})\in
  M\times \widetilde{M}\times \overline{M},(x,\overline{z})\in U\}\] as a vector bundle over the former, the
  analytic phase $\Phi_1$ satisfies the assumptions of Definition \ref{def:phasefunc}.
\end{prop}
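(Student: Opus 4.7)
The plan is to verify the three conditions of Definition \ref{def:phasefunc} for $\Phi_1$ viewed as a function of $(y, \overline{w})$ with parameter $\lambda = (x, \overline{z})$: a holomorphic extension, a unique non-degenerate critical point at which $\Phi_1$ vanishes, and strict negativity of $\Re \Phi_0$ on the real integration contour away from this critical point. The holomorphic extension is inherited directly from that of $\widetilde{\phi}$, so only the last two conditions require work.

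First, one computes
\begin{align*}
  \partial_y \Phi_1 &= -2\,\partial_y \widetilde{\phi}(y, \overline{w}) + 2\,\partial_y \widetilde{\phi}(y, \overline{z}),\\
  \partial_{\overline{w}} \Phi_1 &= 2\,\partial_{\overline{w}} \widetilde{\phi}(x, \overline{w}) - 2\,\partial_{\overline{w}} \widetilde{\phi}(y, \overline{w}),
\end{align*}
and reads off that both vanish precisely when $(y, \overline{w}) = (x, \overline{z})$, at which point the four terms of $\Phi_1$ cancel pairwise. The pure second derivatives $\partial_y^2 \Phi_1$ and $\partial_{\overline{w}}^2 \Phi_1$ vanish on the critical locus, whereas $\partial_y \partial_{\overline{w}} \Phi_1 = -2\,\partial_y \partial_{\overline{w}} \widetilde{\phi}(y, \overline{w})$ restricts on the codiagonal to a nonzero constant multiple of the Kähler metric at $x$, hence is invertible. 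The resulting block anti-diagonal Hessian is therefore non-degenerate, and the implicit function theorem applied to $\nabla \Phi_1 = 0$ gives uniqueness of the critical point in a sufficiently small open set, which determines the neighbourhood $U$ in the statement.

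For negativity, the key input is the standard estimate $|\Psi(x,y)|_h^2 = e^{-\dist(x,y)^2 + O(\dist(x,y)^3)}$, which in a local trivialisation with Kähler potential $\phi$ translates into
\[
  F(a, b) := 2\,\Re \widetilde{\phi}(a, \overline{b}) - \phi(a) - \phi(b) \le -\tfrac{1}{2}\dist(a,b)^2 + O(\dist(a,b)^3),
\]
together with $F(a,a) = 0$ and the symmetry $F(a,b) = F(b,a)$ (which follows from $\overline{\widetilde{\phi}(a, \overline{b})} = \widetilde{\phi}(b, \overline{a})$ since $\phi$ is real). Restricting $\Phi_1$ to the real integration contour $\overline{w} = \overline{y}$ and to the base parameter $(x, \overline{z}) = (x_0, \overline{x_0})$ on the codiagonal, a direct calculation gives
\[
  \Re \Phi_1\big|_{\overline{w} = \overline{y},\, x = z = x_0} = 2\,F(x_0, y),
\]
which is strictly negative for $y$ close to but different from $x_0$.

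To match Definition \ref{def:phasefunc} literally, one finally trivialises a tubular neighbourhood of the critical manifold $\{(y, \overline{w}) = (x, \overline{z})\}$ via the (holomorphically extended) exponential map in the fibres, expressing $\Phi_1$ as a function of a fibre variable in $\C^{2d}$ vanishing at the unique non-degenerate critical point, with all previous properties preserved. The main subtle point is to arrange this trivialisation so that the real integration contour becomes a real slice in the fibre coordinate and the negativity of $F$ transfers directly to the required condition $\Re \Phi_0 < 0$ on $\Omega \setminus \{0\}$; the remainder is a mechanical consequence of the Kähler geometry and of the pointwise decay of $\Psi$.
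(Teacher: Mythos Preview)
Your proof is correct and follows the standard verification of the conditions in Definition~\ref{def:phasefunc}: you compute the gradient and Hessian of $\Phi_1$ in $(y,\overline{w})$, identify the unique non-degenerate critical point as $(x,\overline{z})$ with critical value zero, and reduce the negativity condition on the real contour to the well-known estimate $2\Re\widetilde{\phi}(a,\overline{b})-\phi(a)-\phi(b)\le -\tfrac12\dist(a,b)^2+O(\dist(a,b)^3)$. The computation $\Re\Phi_1|_{\overline{w}=\overline{y},\,x=z=x_0}=2F(x_0,y)$ is clean and correct.

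There is nothing to compare against: the paper does not supply a proof of this proposition. It introduces $\Phi_1$, remarks that ``the fact that $\Phi_1$ is a well-behaved phase function is well-known; let us state it in this real-analytic context,'' and then states Proposition~\ref{prop:Phase} without argument before moving directly to Subsection~\ref{sec:comp-covar-toepl}. Your write-up therefore fills a genuine gap in the exposition. The only point you flag as ``subtle'' without fully resolving---arranging the tubular trivialisation so that the real slice $\{\overline{w}=\overline{y}\}$ becomes a real domain $\Omega\subset\R^{2d}$ in the fibre coordinate---is also left implicit in the paper's statement (``after a trivialisation\ldots''), and is indeed routine once one uses, say, local holomorphic coordinates on $M$ centred at $x_0$ so that $(y,\overline{w})\mapsto(y-x,\overline{w}-\overline{z})$ provides the required chart with $\{\overline{w}=\overline{y}\}$ sitting as a totally real $\R^{2d}$.
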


\subsection{Composition of covariant Toeplitz operators}
\label{sec:comp-covar-toepl}

In this subsection we study the composition rules for operators with
kernels of the form

\[
  T_N^{cov}(f)(x,y)=\Psi^N(x,y)\left(\sum_{k=0}^{cN}N^{d-k}f_k(x,y)\right).
\]
Here, for a small, smooth neighbourhood $U$ of the diagonal in
$M\times M$, one has $f\in S^{r,R}_m(U)$, and $f$ is
holomorphic in the first variable and anti-holomorphic in the second
variable.

Such operators can be formally composed, that is, there holds
\[T_N^{cov}(f)T_N^{cov}(g)=T_N^{cov}(f\sharp g)+O(N^{-\infty})\] where
$f\sharp g$ is a classical symbol. This formal calculus satisfies a
Wick rule (Proposition \ref{prop:bound-nb-deriv}). This allows us, in Proposition
\ref{prop:cov-compo}, to prove that, if $f$ and $g$ are analytic
symbols, then $f\sharp g$ is also an analytic symbol, so that one can
perform an analytic summation (as in Proposition
\ref{prop:sum-anal-symb}), and the error in the composition becoms $O(e^{-cN})$.

\begin{prop}\label{prop:bound-nb-deriv} (See also
  \cite{charles_aspects_2000}, Lemmes 2.33 and following)
  The composition of two covariant Toeplitz operators can be written
  as a formal series in $N^{-1}$. More precisely, if $f$
  and $g$ are functions on a neighbourhood of the diagonal in $M\times
  M$, holomorphic in the first variable, anti-holomorphic in the
  second variable, then
  \[
    T_N^{cov}(f)T_N^{cov}(g)=T_N^{cov}(f\sharp g)+O(N^{-\infty}),
  \]
  where $f\sharp g$ is a formal series $h\sim \sum_{k\geq
    0}N^{-k}(f\sharp g)_k$,
  holomorphic in the first variable, anti-holomorphic in the second variable. The
  composition law can be written as
  \[
    (f\sharp g)_k=B_k(f,g),
  \]
  where $B_k$ is a bidifferential operator of degree at most $k$ in f
  and at most $k$ in $g$.
\end{prop}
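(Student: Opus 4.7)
My plan is to write the composition $T_N^{cov}(f)\,T_N^{cov}(g)(x,z)$ as an oscillatory integral over $M$ in the variable $y$ and then apply the analytic stationary phase lemma of Proposition~\ref{prop:HSPL}. Explicitly, the integral kernel at $(x,z)$ equals
\[
N^{2d} \int_M \Psi^N(x,y)\,\Psi^N(y,z)\,f(x,y)\,g(y,z)\,\frac{\omega^{\wedge d}(y)}{d!}.
\]
Factoring out $\Psi^N(x,z)$ using the compatibility property of the section $\Psi$ of Definition~\ref{def:natural-section}, the remaining integrand carries an exponential factor $e^{N\,\Phi_1(x,y,\overline{y},\overline{z})}$. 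Extending $\widetilde{f}$ and $\widetilde{g}$ anti-holomorphically in their second and first variables respectively, and treating $\overline{y}$ as an independent variable $\overline{w}$, I obtain an integral in $(y,\overline{w})$ with analytic phase $\Phi_1$ and parameter $\lambda = (x,\overline{z})$, to which Proposition~\ref{prop:HSPL} applies thanks to Proposition~\ref{prop:Phase}.

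The stationary phase lemma then yields the formal expansion
\[
\Psi^N(x,z) \sum_{k \geq 0} N^{d-k}\,(f \sharp g)_k(x,z) + O(N^{-\infty})
\]
with
\[
(f \sharp g)_k(x,z) = \tfrac{1}{k!}\,\widetilde{\Delta}(x,\overline{z})^{k}\bigl(\widetilde{f}(x,\cdot)\,\widetilde{g}(\cdot,\overline{z})\,J^{-1}\bigr)\bigl|_{(y,\overline{w}) = (x,\overline{z})}.
\]
Each $(f \sharp g)_k$ is holomorphic in $x$ and anti-holomorphic in $z$ because the parameter $\lambda = (x,\overline{z})$ enters holomorphically and the integrand is holomorphic in $(y,\overline{w})$. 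It remains to establish the Wick property: that $B_k = (f \sharp g)_k$ differentiates each of $f, g$ at most $k$ times.

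The heart of the argument is a direct computation showing that the Hessian of $\Phi_1$ in $(y,\overline{w})$ at the critical point is block off-diagonal. Indeed,
\[
\partial_y^2 \Phi_1 = -2\partial_y^2 \widetilde{\phi}(y,\overline{w}) + 2\partial_y^2 \widetilde{\phi}(y,\overline{z}),
\]
which vanishes at $(y,\overline{w}) = (x,\overline{z})$, and the analogous cancellation holds for $\partial_{\overline{w}}^2 \Phi_1$, while $\partial_y\partial_{\overline{w}}\Phi_1$ is (twice) the K\"ahler metric, hence non-degenerate. Consequently the propagator (inverse of the Hessian) is itself block off-diagonal, and the Wick contractions arising from the Gaussian expansion produced by Proposition~\ref{prop:HSPL} pair every $y$-derivative with exactly one $\overline{w}$-derivative. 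Since $\widetilde{f}(x,\overline{w})$ does not depend on $y$ and $\widetilde{g}(y,\overline{z})$ does not depend on $\overline{w}$, the $\overline{w}$-derivatives fall only on $\widetilde{f}$ or on $J^{-1}$, and the $y$-derivatives only on $\widetilde{g}$ or on $J^{-1}$. At order $k$ there are exactly $k$ contractions, so neither $f$ nor $g$ is differentiated more than $k$ times, giving the claimed bidifferential structure.

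The main technical obstacle is to verify that this block off-diagonal structure survives the holomorphic change of variables of Proposition~\ref{prop:HSPL}, which is used to put the quadratic part of $\Phi_1$ into standard Laplace form and which a priori could mix $y$ with $\overline{w}$. This is handled by choosing the change of variables to respect the splitting of $(y,\overline{w})$ into the two transverse holomorphic subspaces $\{y = \text{const}\}$ and $\{\overline{w} = \text{const}\}$, a choice which is available precisely because the Hessian is off-diagonal; under any such change the resulting $\widetilde{\Delta}$ remains a sum of mixed operators $\partial_{y^i}\partial_{\overline{w}^j}$ plus first-order mixed terms, and the counting argument goes through unchanged.
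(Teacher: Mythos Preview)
Your argument has a genuine gap at the last step, and the paper itself explicitly warns against exactly this approach in the Remark immediately following the Proposition. You claim that one can choose the Morse change of variables $(y,\overline{w})\mapsto(v,\overline{v})$ of Proposition~\ref{prop:HSPL} to respect the splitting into $y$- and $\overline{w}$-directions ``precisely because the Hessian is off-diagonal.'' The off-diagonality of the Hessian only guarantees that the \emph{linearisation} of the change of variables can be taken block-diagonal; it says nothing about the higher-order terms of $\kappa$, which are dictated by the higher Taylor coefficients of $\Phi_1$. The paper computes that a split change of variables $\Phi_1=-v(y)\cdot\overline{v}(\overline{w})$ is (in dimension $1$) equivalent to $\partial_y\partial_{\overline{w}}\log\Phi_1=0$, and checks explicitly that this fails for $M=\S^2$. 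So on the sphere there is no Morse change of variables respecting the splitting, and your proposed $\widetilde{\Delta}$ will contain genuine $\partial_y^2$ and $\partial_{\overline{w}}^2$ terms, breaking the counting. Relatedly, your assertion that ``at order $k$ there are exactly $k$ contractions'' is only true for a purely quadratic phase; once the cubic and higher part of $\Phi_1$ is reintroduced as vertices, order $k$ involves up to $3k$ contractions, and the bound on the number of derivatives hitting $f$ or $g$ requires a further argument.

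The paper's proof is completely different: it passes to \emph{normalised} covariant operators $T_N^{cov}(fa)$, with $a$ the formal Bergman symbol, and exploits the exact Wick rule $T_N^{cov}(af)T_N^{cov}(ag)=T_N^{cov}(afg)+O(N^{-\infty})$ when $f$ depends only on the first variable. For general $f$ one writes $f(x,\overline{w})=f(x,\overline{z})-\partial_y\Phi_1\cdot F_1$ and integrates by parts in $y$, gaining a factor $N^{-1}$ while differentiating $g$ at most once; iterating gives the bound on $g$, and the bound on $f$ follows by duality. The operators $B_k$ are then recovered from these $C_k$ via Cauchy products with $a$ and $a^{*-1}$. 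If you want to rescue your stationary-phase approach, the relevant structural fact is not merely the off-diagonal Hessian but that $\Phi_1$ vanishes identically on $\{y=x\}$ and on $\{\overline{w}=\overline{z}\}$, so that \emph{every} Taylor coefficient of $\Phi_1$ at the critical point is mixed; one can then run the Feynman-diagram form of stationary phase (with the inverse Hessian as propagator and the cubic-and-higher part of $\Phi_1$ as vertices) and count that at least $\mu$ of the $k+\mu$ $\partial_y$'s in each term must land on the remainder $R^\mu$, leaving at most $k$ for $g$.
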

Since the thesis \cite{charles_aspects_2000} is in French, we present
a self-contained proof of this fact in the appendix.

\begin{rem}
Proposition \ref{prop:bound-nb-deriv} is not a trivial consequence of the expression
of the phase $\Phi_1$. Indeed, the composition rule for $f\sharp g$ consists in
finding, for $x,\overline{z}$ fixed, a holomorphic change of variables
$(y,\overline{w})\mapsto (v,\overline{v})$ such that $\Phi_1=-v\cdot
\overline{v}$; then, if $J$ denotes the Jacobian of this change of variables,
\begin{multline*}
  (f\sharp
  g)_k(x,\overline{z})\\=\sum_{n=0}^k\cfrac{\partial_{\overline{v}}^n\partial_v^n}{n!}\left(\sum_{l=0}^{k-n}f_l(x,\overline{w}(x,v,\overline{v},\overline{z}))g_{k-n-l}(y(x,v,\overline{v},\overline{z}),\overline{z})J(x,v,\overline{v},\overline{z})\right)_{v=\overline{v}=0}.
\end{multline*}
If the Morse change of variables can be split as
$(y,\overline{w})\mapsto (v(y),\overline{v}(\overline{w}))$, then
Proposition \ref{prop:bound-nb-deriv} follows immediately from the
formula above: holomorphic derivatives (in $v$) will only hit $g_{k-n-l}$ or $J$
while anti-holomorphic derivatives (in $\overline{v}$) will only hit
$f_{l}$ or $J$, so that both $f$ and $g$ are differentiated at
most $k$ times.

In the model case where $\Phi_1$ is a quadratic form (such as on
Bargmann space), such a splitting is indeed true. However, the
property $\Phi_1=-v(y)\cdot
\overline{v}(\overline{w})$ is, in dimension 1, equivalent to the easily checked identity
\[
  \partial_y\partial_{\overline{w}}\log(\Phi_1)=0.
\]
This property is false, for instance, if $\Phi_1$ represents the usual
$2$-sphere in the stereographic projection. In this case,
\begin{multline*}
  \Phi_1:(x,y,\overline{w},\overline{z})\mapsto 2(\log(1+x\cdot
    \overline{w})+\log(1+y\cdot \overline{z})\\-\log(1+y\cdot
    \overline{w})-\log(1+x\cdot \overline{z})).
\end{multline*}
Restricting to $x=z=0$, we obtain
  \[
    \partial_y\partial_{\overline{w}}\log(\Phi_1)=\frac{-2}{(1+y\cdot
      \overline{w})\log(1+y\cdot \overline{w})}+\frac{2y\cdot
      \overline{w}\left(\log(1+y\cdot
        \overline{w})+1\right)}{\left((1+y\cdot
        \overline{w})\log(1+y\cdot \overline{w})\right)^2},
  \]
  which is obviously non-zero (in particular, it is equal to
  $\frac{1-\log(2)}{2\log^2(2)}$ at $y=\overline{w}=1$.)
\end{rem}

Proposition \ref{prop:bound-nb-deriv} predicts that, when applying a stationary
phase lemma to $\Phi_1$ in order to study $T_N^{cov}(f)T_N^{cov}(g)$,
at order $k$, only derivatives of $f$ and $g$ at order $k$ will
appear. 
However, in the stationary phase
(Lemma \ref{prop:HSPL}), these derivatives appear in the form of an usual Laplace operator, conjugated by a
change of variables.
Before proceeding further, let us prove a technical lemma.
\begin{lem}\label{lem:ctrl-cht-var}
  Let $U,V,\Lambda$ be domains in $\C^d$ containing $0$. Let $\kappa_{\lambda}$ be a biholomorphism from $V$ to $U$,with real-analytic dependence on
  $\lambda\in \Lambda$, and such that $\kappa_{\lambda}(0)=0$ for all
  $\lambda\in \Lambda$. Let $\kappa(\lambda,v)\mapsto
  \kappa_{\lambda}(v)$, and suppose that there exists $C_{\kappa},r_0,m_0$ such
  that, for all $j\in \N_0$, one has
  \[
    \|\kappa\|_{C^j(V\times \Lambda)}\leq
    C\cfrac{r_0^{j}j!}{(j+1)^{m_0}}.
    \]
  Then the
  following is true for all $m\geq m_0,r\geq 8r_02^{m-m_0}$.

  Let $f$ be a real-analytic function on $U\times \Lambda$, and
  suppose that there exists $C_f$ and $k\geq 0$ such that
  \[
    \|f\|_{C^{j}(U\times \Lambda)}\leq C_f\cfrac{r^j(j+k)!}{(j+k+1)^m}.
  \]
  Let $n\leq k$ and $i\leq 2n$; let $\nabla^{i}_v$ denote the
  $i$-th gradient (as in Definition \ref{def:Anal-fct-space}) over the first set of variables, acting
  on $V\times \Lambda$; then \[g\mapsto (\lambda\mapsto
    \nabla^i_vg(\kappa_{\lambda}(v),\lambda)_{v=0})\] is a
  differential operator of degree $i$, from functions on $U\times
  \Lambda$ to vector-valued functions on
  $\Lambda$. Let $(\nabla^i_{\kappa})^{[\leq n]}$ denote the
  truncation of this differential operator to a differential operator
  of degree less than $n$.

  Then, with
  \[
    \gamma=4Cr,
  \]
  one has, for every $j\geq 0$, 
  \begin{multline*}
    \|(\nabla^i_{\kappa})^{[\leq n]}f\|_{\ell^1(C^j(\Lambda))}\leq
    i^{d+1}j^{d+1}\gamma^iC_f\frac{r^{j+i}}{(i+j+l+1)^m}\\ \times\begin{cases}(i+j+k)!&\text{if
        $i\leq
        n$}\\\max((n+j+k)!(i-n)!,(j+k)!i!)&\text{otherwise.}\end{cases}
  \end{multline*}
  
  \end{lem}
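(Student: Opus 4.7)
\begin{preuve}[Proof plan]
The plan is to expand $\nabla^i_v[f(\kappa_\lambda(v),\lambda)]\big|_{v=0}$ by Fa\`a di Bruno's formula, organise the terms by the number of factors $p$ (which equals the degree of the resulting differential operator applied to $f$ and is where the truncation $(\nabla^i_\kappa)^{[\leq n]}$ acts), then distribute the $j$ additional $\lambda$-derivatives by the Leibniz rule, and finally apply the analytic bounds on $f$ and $\kappa$ together with Lemma \ref{lem:binom-multi} and Lemma \ref{prop:lem-hard} to sum the combinatorial expressions.

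Concretely, after Fa\`a di Bruno a generic term contributing to $\nabla^i_v[f\circ\kappa_\lambda]|_{v=0}$ has the form
\[
C_{p,\mathbf{i}}\,(\partial^{p}_x f)(0,\lambda)\,\prod_{s=1}^{p}(\partial^{i_s}_v\kappa_\lambda)(0),
\]
with $p$ parts summing to $i$, multiplied by a combinatorial constant controlled by $\binom{i}{i_1,\ldots,i_p}$. Taking $j$ more derivatives in $\lambda$ by Leibniz gives a multinomial distribution $j=j_0+j_1+\cdots+j_p$, with $j_0$ derivatives hitting $f$ (raising its $C^\bullet$-order to $p+j_0$) and $j_s$ derivatives hitting the $s$-th $\kappa$-factor. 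Using the hypothesised bounds,
\[
\|(\partial^{p+j_0}f)(\cdot)\|\leq C_f\frac{r^{p+j_0}(p+j_0+k)!}{(p+j_0+k+1)^m},\qquad
\|\partial^{i_s+j_s}\kappa\|\leq C\frac{r_0^{i_s+j_s}(i_s+j_s)!}{(i_s+j_s+1)^{m_0}},
\]
one rewrites the sum as a product of factorials times combinatorial coefficients $\binom{i}{i_1,\ldots,i_p}\binom{j}{j_0,\ldots,j_p}$ and applies Lemma \ref{lem:binom-multi} (writing each $(i_s+j_s)!$ and the $(p+j_0+k)!$ in a form where the factorials cancel along a common $(i+j+k)!$ factor, up to the product of the $(\cdot+1)^m$ denominators). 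The choice $r\geq 8r_0 2^{m-m_0}$ absorbs both the geometric factor $r_0^{i+j}$ against $r^{i+j}$ with enough slack, and the $(\cdot+1)^{m_0}$ denominators against $(\cdot+1)^m$ via Proposition \ref{prop:move-m-r-R}, while the residual sums over the $j_s$'s fit into Lemma \ref{prop:lem-hard} with $n+1$ summation indices.

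Once the $j$-sums and $\mathbf{i}$-sums have been controlled, everything collapses to a sum over $p\in\{1,\ldots,\min(i,n)\}$ of a single term of the form $\gamma^i C_f r^{j+i}(p+j+k)!(i-p)!/(\cdot)^m$, where the factor $\gamma^i$ packages the $\kappa$-constants and the binomial polynomial blow-ups $i^{d+1}j^{d+1}$ arise from converting polyindex sums into scalar-index sums via Lemma \ref{lem:control-poly-binom}. In the regime $i\leq n$, the truncation is inactive and the dominating partition is $p=i$, $i_s=1$, which gives $(i+j+k)!$. In the regime $i>n$, the constraint $p\leq n$ forces at least one factor $i_s$ of size $\geq\lceil (i-n+1)/n\rceil$, and the dominating terms are $p=n$ with either all but one $i_s$ equal to $1$ (yielding the factorial $(n+j+k)!(i-n)!$) or all $j_s$'s concentrated on the large $\kappa$-factor (yielding $(j+k)!\,i!$); the maximum of these two is exactly the bound claimed.

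The main obstacle is the bookkeeping when $i>n$: in that case the Fa\`a di Bruno partitions are numerous but the truncation removes precisely those with many small parts, and one has to show that the surviving partitions, after summation over the Leibniz distributions of $\lambda$-derivatives, still fit within the $\max((n+j+k)!(i-n)!,(j+k)!\,i!)$ envelope. The key point is that the two terms in the max correspond to two extremal ways of distributing derivatives---either all $\lambda$-derivatives go to $f$ (giving the first) or all go to the big $\kappa$-factor (giving the second)---and log-convexity of the factorial function, combined with Lemma \ref{prop:lem-hard}, ensures that every intermediate distribution is controlled by one of the two extremes.
\end{preuve}
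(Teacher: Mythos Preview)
Your overall architecture---Fa\`a di Bruno in $v$, then Leibniz in $\lambda$, then the analytic bounds on $f$ and $\kappa$, then Lemma~\ref{lem:binom-multi} to collapse the product of factorials, then a convexity argument in the number of parts $p$---is exactly the paper's strategy, and your identification of the final dichotomy via log-convexity is right in spirit.

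There is, however, a real gap in one step: you propose to handle the sum over the Leibniz distribution $j_0+\cdots+j_p=j$ by invoking Lemma~\ref{prop:lem-hard} ``with $n+1$ summation indices.'' The constant $C(n,d)$ in that lemma depends on the number of indices, and its hypothesis requires $m\geq 2(d+n-1)$; here $p$ ranges up to the truncation level $n$, which itself is only bounded by $k$ and is \emph{not} fixed relative to $m$. So you cannot apply Lemma~\ref{prop:lem-hard} uniformly. The paper sidesteps this entirely: instead of Lemma~\ref{prop:lem-hard}, it (i) uses the elementary inequality $(1+x)(1+y)\geq 1+x+y$ to merge all the $(\cdot+1)^m$ denominators into a single $(i+j+l+1)^m$, (ii) counts the number of compositions $s_1+\cdots+s_{|P|}=i$ and $e_1+\cdots+e_{|P|}=j-e_0$ crudely by $2^i$ and $2^{j-e_0+|P|}$ respectively, and (iii) observes that after this the summand is \emph{increasing} in $e_0$, so one simply takes $e_0=j$. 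This is why the bound $r\geq 8r_0 2^{m-m_0}$ carries the factor $8$: one needs $\|\kappa\|_{C^j}\leq C(r/4)^j(j-1)!/j^m$ (passing from $j!$ to $(j-1)!$ at the cost of a factor $2^{-j}$) so that the $2^{\cdot}$ counting factors are absorbed.

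A smaller point: your description of the two extremes in the $\max$ as ``all $\lambda$-derivatives go to $f$'' versus ``all go to the big $\kappa$-factor'' is not quite the right axis. After the paper has already set $e_0=j$ by monotonicity, what remains is a single sum over $|P|\in\{1,\ldots,\min(i,n)\}$ of $(|P|+j+l)!\,(i-|P|)!$, and it is log-convexity \emph{in $|P|$} (extended to $|P|=0$) that produces $\max\bigl((n+j+l)!(i-n)!,\,(j+l)!\,i!\bigr)$ when $i>n$, and $(i+j+l)!$ when $i\leq n$.
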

  \begin{proof}
    Let us make explicit the operator $(\nabla^i_{\kappa})^{[\leq n]}$.
     Given a polyindex $\mu$ with $|\mu|=i$, the Faà di Bruno formula
  states:
  \[
    \partial^{\mu}_v(f(\kappa_{\lambda}(v),\lambda))_{v=0}=\sum_{P\in
      \Pi(\{1,\ldots,i\})}f^{|P|}(0,\lambda)\prod_{E\in
      P}(\partial^E\kappa_{\lambda})(0),
  \]
  where the sum runs among all partitions $P=\{E_1,\ldots,E_{|P|}\}$
  of $\{1,\ldots,i\}$
  .
  
  When considering the operator $(\nabla^{i}_{\kappa})^{[\leq n]}$, we
  only need to consider partitions $P$ such that $|P|\leq n$. If the sizes
  $|E_1|=s_1,\ldots, |E_{|P|}|=s_{|P|}$ of the elements of $P$ are
  fixed, the number of possible partitions is simply
  \[
    \cfrac{i!}{(|P|)!s_1!\ldots s_{|P|}!}.
    \]
    Then, since there are less than $i^d$ polyindices $\mu$ with
    $|\mu|=i$, one has, for all $\rho\in \N_0^d$ with $|\rho|=j$, by
    differentiation of the Faà di Bruno formula and
    Proposition \ref{prop:prod-Cj},
    \begin{multline*}
      \|\partial^{\rho}((\nabla^i_{\kappa})^{[\leq n]}f)\|_{\ell^1}\leq \\i^d\sum_{|P|=1}^{\min(n,i)}\hspace{-5pt}
    \sum_{\substack{e_0+\ldots+e_{|P|}=j\\s_1+\ldots+s_{|P|}=|P|}}\cfrac{j!}{e_0!e_1!\ldots e_{|P|}!}\,\cfrac{i!}{(|P|)!s_1!\ldots s_{|P|}!}\,\|f\|_{C^{|P|+e_0}}\prod_{i=1}^{
      |P|}\|\kappa\|_{C^{s_i+e_i}}.
  \
\end{multline*}
Here $\kappa$ denotes the real-analytic function $(\lambda,v)\mapsto
  \kappa_{\lambda}(v)$.
  
  In particular, since there are less than $j^d$ polyindices $\rho$ such that
  $|\rho|=j$, one has
  \begin{multline}\label{eq:Faa}
   \|\partial^{\rho}((\nabla^i_{\kappa})^{[\leq n]}f)\|_{\ell^1}\leq i^dj^d\\\times\sum_{|P|=1}^{\min(n,i)}\,
    \hspace{-0.7em}\sum_{\substack{e_0+\ldots+e_{|P|}=j\\s_1+\ldots+s_{|P|}=|P|}}\cfrac{j!}{e_0!e_1!\ldots e_{|P|}!}\,\cfrac{i!}{(|P|)!s_1!\ldots s_{|P|}!}\,\|f\|_{C^{|P|+e_0}}\prod_{i=1}^{
      |P|}\|\kappa\|_{C^{s_i+e_i}}.
  \end{multline}
  Since, for all $j\geq 0$, one has
  \[
    \|\kappa\|_{C^j(V\times \Lambda)}\leq
    C\cfrac{r_0^{j}j!}{(j+1)^{m_0}},
  \]
  by Lemma \ref{prop:move-m-r}, for all $m\geq m_0, r\geq
  8r_02^{m-m_0}$, one has
  \[
    \|\kappa\|_{C^j}\leq C\cfrac{(r/8)^jj!}{(j+1)^m}.
  \]
  In particular, if $j\geq 1$, there holds
  \[
    \|\kappa\|_{C^j}\leq
    C\frac{(r/4)^j(j-1)!}{j^m}j\left(\frac{j}{j+1}\right)^m2^{-j}\leq C\frac{(r/4)^j(j-1)!}{j^m},
  \]
  since
  \[
    j\left(\frac{j}{j+1}\right)^m2^{-j}\leq j2^{-j}\leq 1.
  \]
  Let us suppose further that
  \[
    \|f\|_{C^j(U\times \Lambda)}\leq C_f\cfrac{r^jR^l(j+l)!}{(j+l+1)^m}.
    \]
  Then, the contribution of one term in the sum \eqref{eq:Faa} is
  \begin{multline*}
    \cfrac{j!}{e_0!e_1!\ldots e_{|P|}!}\,\cfrac{i!}{(|P|)!s_1!\ldots s_{|P|}!}\,\|f\|_{C^{|P|+e_0}}\prod_{i=1}^{
      |P|}\|\kappa\|_{C^{s_i+e_i}}\\
    \leq
    C_fC^{|P|}\cfrac{r^{|P|+e_0}(r/4)^{i+j-e_0}R^l(|P|+e_0+l)!i!}{(|P|+e_0+l+1)^m(|P|)!s_1!\ldots
      s_{|P|}!}\\ \times\cfrac{j!(s_1+e_1-1)!\ldots(s_{|P|}+e_{|P|}-1)!}{e_0!e_1!\ldots
      e_{|P|}!(s_1+e_1)^{m}\ldots(s_{|P|}+e_{|P|})^m}.
  \end{multline*}
  As $e_0+\ldots+e_{|P|}=j$ and $s_1+\ldots+s_{|P|}=i$, and
  since, as soon as $x\geq 0$, $y\geq 0$, there holds
  \[
    (1+x)(1+y)=1+x+y+xy\geq 1+x+y,
  \]
    one has
  \begin{multline*}
    (|P|+e_0+l+1)^m(s_1+e_1)^m\ldots(s_{|P|}+e_{|P|})^m\\\geq
    (|P|+j+i+l-|P|+1)^m=(j+i+l+1)^m,
  \end{multline*}
  so that one can simplify
  \begin{multline*}
    C_fC^{|P|}\cfrac{r^{|P|+e_0}(r/4)^{i+j-e_0}R^l(|P|+e_0+l)!i!}{(|P|+e_0+l+1)^m(|P|)!s_1!\ldots
      s_{|P|}!}\\ \times\cfrac{j!(s_1+e_1-1)!\ldots(s_{|P|}+e_{|P|}-1)!}{e_0!e_1!\ldots
      e_{|P|}!(s_1+e_1)^{m}\ldots(s_{|P|}+e_{|P|})^m}\\\leq
    C_fC^{|P|}\cfrac{r^{|P|+e_0}(r/4)^{i+j-e_0}R^l(|P|+e_0+l)!}{(j+i+l+1)^m}\\
    \times \cfrac{i!j!(s_1+e_1-1)!\ldots(s_{|P|}+e_{|P|}-1)!}{e_0!(|P|)!s_1!\ldots s_{|P|}!e_1!\ldots
      e_{|P|}!}.
  \end{multline*}
  By Lemma \ref{lem:binom-multi}, one has
  \[
    \cfrac{(s_1+e_1-1)!\ldots(s_{|P|}+e_{|P|}-1)!}{s_1!\ldots s_{|P|}!e_1!\ldots
      e_{|P|}!}\leq
    \cfrac{(i-|P|+j-e_0)!}{(i-|P|+1)!(j-e_0)!}.
    \]

Hence, the contribution of one term in the sum \eqref{eq:Faa} is smaller than
\begin{multline*}
  C_fC^{|P|}\cfrac{i!}{(|P|)!(i-|P|+1)!}\\ \times\cfrac{r^{|P|+e_0}(r/4)^{i+j-e_0}R^l(|P|+e_0+l)!j!(i-|P|+j-e_0)!}{(j+i+l+1)^me_0!(j-e_0)!}.
\end{multline*}
As $(i-|P|+j-e_0)!\leq (j-e_0)!(i-|P|)!2^{i+j-e_0}$ and
$i!\leq 2^{i}(|P|)!(i-|P|)!$, we control
each term in the sum \eqref{eq:Faa} with
\begin{multline*}
  C_f2^{e_0-j}C^{|P|}r^{i}\cfrac{r^{j+|P|}R^l(|P|+e_0+l)!}{(j+i+l+1)^m}\,\cfrac{j!(i-|P|)!}{e_0!}
  \\ \leq
   C_f2^{e_0-j}(Cr)^{i}\cfrac{r^{j+i}R^l(|P|+e_0+l)!}{(j+i+l+1)^m}\,\cfrac{j!(i-|P|)!}{e_0!}.
 \end{multline*}
 
There are $\binom{i}{|P|}\leq 2^{i}$ choices for positive $s_1,\ldots,s_{|P|}$ such that their sum is $i$;
similarly, there are $\binom{j-e_0+|P|}{|P|}\leq
2^{j-e_0+|P|}$ choices for non-negative $e_1,\ldots,e_{|P|}$ such that their sum is $j-e_0$.
Hence
\begin{align*}
  &\|(\nabla^i_{\kappa})^{[\leq n]}f\|_{\ell^1(C^{j})}\\ &\leq
  i^dj^d\sum_{|P|=1}^{\min(n,i)}\sum_{e_0=0}^{j}2^{j+|P|-e_0}2^{i}C_f2^{e_0-j}(Cr)^{i}\cfrac{r^{j+i}R^l(|P|+e_0+l)!}{(j+i+l+1)^m}\,\cfrac{j!(i-|P|)!}{e_0!}\\
  &\leq i^dj^d\sum_{|P|=1}^{\min(n,i)}\sum_{e_0=0}^{j}C_f(4Cr)^{i}\cfrac{r^{j+i}R^l(|P|+e_0+l)!}{(j+i+l+1)^m}\,\cfrac{j!(i-|P|)!}{e_0!}.
\end{align*}

The terms in the sum above are increasing with respect to $e_0$, so that
\begin{multline*}
  \|\nabla_{v}^{i}f(x,\kappa(x,v,\overline{z}))_{v=0}\|_{\ell^1(C^{j})}\\\leq
  i^dj^{d+1}\sum_{|P|=1}^{\min(n,i)}
  C_f(4Cr)^{i}\cfrac{r^{j+i}R^l(|P|+j+l)!}{(i+j+l+1)^m}\,(i-|P|)!.
\end{multline*}
Observe that the quantity in the sum above
is log-convex with respect to $|P|$ as it is a product of factorials, so that
\begin{multline*}
  \|(\nabla^i_{\kappa})^{[\leq n]}f\|_{\ell^1(C^{j})}\\\leq
  i^{d+1}j^{d+1}C_f\cfrac{r^{j+i}R^l}{(i+j+l+1)^m}(4Cr)^{i}\max((n+j+l)!(i-n)!\;,\;(j+l)!i!)
\end{multline*}
if $i\geq n$, and
\[
  \|(\nabla^i_{\kappa})^{[\leq n]}f\|_{\ell^1(C^{j})}\leq
  i^{d+1}j^{d+1}
  C_f\cfrac{r^{j+i}R^l}{(i+j+l+1)^m}(4Cr)^{i}(i+j+l)!
\]
if $i\leq n$.
This concludes the proof, with $\gamma=4Cr$.
  \end{proof}

We are in position to prove the first part of Theorem
\ref{thr:Compo}, which does not use the structure of the Bergman
kernel. Let us prove that the composition
of two covariant Toeplitz operators with analytic symbols also admits an analytic symbol,
up to an exponentially small error.
\begin{prop}
  \label{prop:cov-compo}
  There exists a small neighbourhood $U$ of the diagonal in $M\times
  M$, and constants $C,m_0,r_0$ such that, for every $m\geq
  m_0,r\geq r_0,R\geq Cr^3$, there exists $c'>0$ such that, for every $f\in S^{r,R}_m(U)$ and $g\in
  S^{2r,2R}_m(U)$, holomorphic in the first variable, anti-holomorphic
  in the second variable, there exists $f\sharp g\in S^{2r,2R}_m(U)$
  with the same properties, such that
  \[
    \|T_N^{cov}(f)T_N^{cov}(g)-T_N^{cov}(f\sharp g)\|_{L^2\mapsto
      L^2}\leq Ce^{-c'N}\|g\|_{S_m^{2r,2R}(U)}\|f\|_{S_m^{r,R}(U)}.
  \]
  Moreover
  \[
    \|f\sharp g\|_{S_m^{2r,2R}(U)}\leq
    C\|g\|_{S_m^{2r,2R}(U)}\|f\|_{S_m^{r,R}(U)}.
  \]
\end{prop}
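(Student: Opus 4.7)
The plan is to apply the complex stationary phase lemma (Proposition~\ref{prop:HSPL}) to the integral kernel of $T_N^{cov}(f)T_N^{cov}(g)$ along the phase $\Phi_1$ from Proposition~\ref{prop:Phase}. This produces an expansion
\[
  T_N^{cov}(f)T_N^{cov}(g) = \Psi^N(x,z) \sum_{k=0}^{cN} N^{d-k} (f\sharp g)_k(x,z) + R(N),
\]
where $(f\sharp g)_k$ is the bidifferential operator $B_k(f,g)$ identified in Proposition~\ref{prop:bound-nb-deriv}, and the remainder $R(N)$ is $O(e^{-c'N})$ times a sup of the amplitude, which by Proposition~\ref{prop:sum-anal-symb} is controlled by $\|f\|_{S^{r,R}_m}\|g\|_{S^{2r,2R}_m}$. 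The $L^2\to L^2$ estimate for $R$ then follows from the exponential decay of $\Psi^N$ off the diagonal and a standard Schur bound.

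The crux is to show $f\sharp g \in S^{2r,2R}_m(U)$ with norm linear in each factor, which is \emph{stronger} than what a brute-force bound on $\tilde{\Delta}(\lambda)^k$ as a differential operator of total order $2k$ would give (that would yield only something like $S^{Cr,CR}_{Cm}$). The essential ingredient here is the Wick property from Proposition~\ref{prop:bound-nb-deriv}: in the explicit formula
\[
  (f\sharp g)_k = \sum_{n=0}^{k}\frac{1}{n!}\,\partial_{\overline v}^{n}\partial_{v}^{n}\!\left(\sum_{l=0}^{k-n} f_l(x,\overline w(v,\overline v)) g_{k-n-l}(y(v,\overline v),\overline z)\, J(v,\overline v)\right)\!\bigg|_{v=\overline v=0},
\]
the holomorphic derivatives $\partial_v^n$ can only hit $g$ and $J$, and the antiholomorphic derivatives $\partial_{\overline v}^n$ can only hit $f$ and $J$, so each of $f_l$ and $g_{k-n-l}$ is differentiated at most $n\leq k$ times after the Morse change of variables $\kappa$.

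To carry out the bookkeeping, I would expand each truncated composition $(\nabla^{\leq n}_\kappa f_l)$ and $(\nabla^{\leq n}_\kappa g_{k-n-l})$ using Lemma~\ref{lem:ctrl-cht-var}, which converts Fa\`a di Bruno sums for $\kappa$ into clean factorial bounds, producing a geometric constant $\gamma = 4Cr$ per derivative (this is where the condition $R\geq Cr^3$ enters, absorbing $\gamma^{2k}$ into the $R^k$ factor). The $C^j$-norm of $(f\sharp g)_k$ is then bounded by a quadruple sum over $n$, $l$, and the two integer splittings coming from the Leibniz rule on the product $f_l \cdot g_{k-n-l} \cdot J$; each term carries a factor of the form $\cfrac{(j+k+1)^m\cdot (\mathrm{factorials})}{\prod (\text{size}_i+1)^m}$ after factoring out $\cfrac{(2r)^j (2R)^k (j+k)!}{(j+k+1)^m}$.

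The main obstacle, and the heart of the argument, is showing that this multi-sum is bounded by a universal constant (independent of $j,k$) as soon as $m$ is large enough. This reduces to the combinatorial control provided by Lemma~\ref{prop:lem-hard} (applied with $n=4$ or $n=5$ and $d=1$ to handle the four groups of indices), in exactly the same spirit as the inductive step in Proposition~\ref{prop:anal-symb-class}. Once this sum is controlled, one obtains $\|f\sharp g\|_{S^{2r,2R}_m} \leq C\|f\|_{S^{r,R}_m}\|g\|_{S^{2r,2R}_m}$, and the analytic anti-holomorphic dependence in the first/second variables is inherited from $f$, $g$ and the holomorphy of the Morse change of variables.
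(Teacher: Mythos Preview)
Your overall architecture matches the paper's proof: stationary phase along $\Phi_1$, invoke Proposition~\ref{prop:bound-nb-deriv} for the Wick bound, expand via Leibniz and Fa\`a di Bruno, feed through Lemma~\ref{lem:ctrl-cht-var}, and close with Lemma~\ref{prop:lem-hard}. So the skeleton is right.

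However, your explanation of \emph{why} the Wick property holds is precisely the trap the paper warns against in the Remark following Proposition~\ref{prop:bound-nb-deriv}. You write that ``the holomorphic derivatives $\partial_v^n$ can only hit $g$ and $J$, and the antiholomorphic derivatives $\partial_{\overline v}^n$ can only hit $f$ and $J$.'' This would be true if the Morse change of variables split as $(y,\overline w)\mapsto(v(y),\overline v(\overline w))$, but the paper explicitly computes that this splitting \emph{fails} already on $\mathbb S^2$. The actual mechanism is subtler: one expands $\Delta_v^n$ by Leibniz into terms $\partial_v^{\nu_1}f_l\cdot\partial_v^{\nu_2}g_{k-n-l}\cdot\partial_v^{2\mu-\nu_1-\nu_2}J$ where $|\nu_1|,|\nu_2|$ may each run up to $2n$, and then uses Proposition~\ref{prop:bound-nb-deriv} as a black box to justify replacing $\partial_v^{\nu_1}f_l(x,\overline y(x,v,\overline z))|_{v=0}$ by its truncation $(\partial_\kappa^{\nu_1})^{[\leq n]}f_l$ to a differential operator of order $\leq n$ in $f_l$. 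The truncation is legitimate only because the \emph{total} bidifferential operator $B_n$ has order $\leq n$ in each argument, not because individual terms in the Fa\`a di Bruno expansion respect this.

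A second point you gloss over: after applying Lemma~\ref{lem:ctrl-cht-var}, the bound involves the piecewise factorial $A(i,j,l,n)$ (equal to $(i+j+l)!$ for $i\leq n$ but $\max((n+j+l)!(i-n)!,(j+l)!i!)$ for $i>n$), and the paper's ``Second step'' is a nontrivial case analysis showing that the resulting quotient of factorials is bounded by $4^n$ via log-convexity in $l$ and in $|P|$. Only after this does the sum collapse to a form where Lemma~\ref{prop:lem-hard} (with $n=3$, not $4$ or $5$) applies. Your sketch jumps from ``factor out $(2r)^j(2R)^k(j+k)!/(j+k+1)^m$'' directly to Lemma~\ref{prop:lem-hard}, which hides the real work.
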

\begin{proof}
  The kernel of $T_N^{cov}(f)T_N^{cov}(g)$ can be written as
  \begin{multline*}
    (x,z)\mapsto \Psi^N(x,{z})\int_{y\in
      M}e^{N\Phi_1(x,y,\overline{y},z)}\left(\sum_{k=0}^{cN}N^{d-k}f_k(x,\overline{y})\right)\\
    \times\left(\sum_{j=0}^{cN}N^{d-j}g_j(y,\overline{z})\right)\dd
    y.
  \end{multline*}
  Here, and until the end of the proof, we write $f_k(x,\overline{y})$ to indicate that $f_k$
  is holomorphic in the first variable and anti-holomorphic in the
  second variable. We similarly write $g_j(y,\overline{z})$.

  Since $\Phi_1$ is an analytic phase (Proposition \ref{prop:Phase}), let us apply the
  stationary phase lemma (Proposition \ref{prop:HSPL}). There exists a
  biholomorphism on a neighbourhood of $x$ in $\widetilde{M}$, of the form
  \[
    \kappa_{(x,\overline{z})}:(y,\overline{y})\mapsto
    v(x,y,\overline{y},\overline{z}),
  \]with holomorphic dependence on $(x,\overline{z})$ (that is,
  holomorphic in $x$ and anti-holomorphic in $z$), in which the phase $\Phi_1$
  can be written as $-|v|^2$. In particular,
  \[
    v(x,x,\overline{z},\overline{z})=0.
  \]
  Let $J$ denote the
  Jacobian of this change of variables. Then
  \begin{multline*}
    T_N^{cov}(f)T_N^{cov}(g)(x,z)=\\\Psi^N(x,{z})\sum_{k,j,n=0}^{\ldots}N^{d-k-j-n}\cfrac{\Delta^n_v}{n!}\left(f_k(x,\overline{y}(x,v,\overline{z}))g_j(y(x,v,\overline{z}),\overline{z})J(x,v,\overline{z})\right)_{v=0}\\+\ldots
  \end{multline*}
  We will make sense of this sum later on; that is, prove that one can
  sum until $k,j$ and $n$ are equal to $cN$, up to an exponentially
  small error. For the moment, let us treat this formula in decreasing
  powers of $N$. Writing
  \[
    T_N^{cov}(f)T_N^{cov}(g)(x,z)=T_N^{cov}(f\sharp
    g)(x,\overline{z})=\Psi^N(x,{z})\sum_{k=0}^{\cdots}N^{d-k}(f\sharp
    g)_k(x,\overline{z})+\ldots
    \]the symbol
  $f\sharp g$ must be holomorphic in the first variable,
  anti-holomorphic in the second variable, and such that
  \[
    (f\sharp
    g)_k(x,\overline{z})=\sum_{n=0}^k\cfrac{\Delta^n_{v}}{n!}\left(\sum_{l=0}^{k-n}f_l(x,\overline{y}(x,v,\overline{z}))g_{k-n-l}(y(x,v,\overline{z}),\overline{z})J(x,v,\overline{z})\right)_{v=0}.
  \]
  
  Here the Laplace operator acts on $v$.

  The proof proceeds now in two steps. In the first step, we write
  a control of the formal symbol $f\sharp g$ using the analytic symbol
  structure of $f$ and $g$ and Lemma \ref{lem:ctrl-cht-var}. This
  control involves a complicated quotient of factorials as well as a
  rational expression similar to the one appearing in Lemma
  \ref{prop:lem-hard}. The second step is a control the quotients of factorials,
  thus reducing the proof that $f\sharp g\in S^{2r,2R}_m$ to Lemma
  \ref{prop:lem-hard}. It is then standard
  \cite{sjostrand_singularites_1982,martinez_introduction_2002} to
  check that, if $f,g,f\sharp g$ are analytic symbols then one can
  perform an analytic summation to prove that \[T_N^{cov}(f)T_N^{cov}(g)=T_N^{cov}(f\sharp g)+O(e^{-cN}).\]
  
  {\bf First step.}
  
  We wish to control $\|(f\sharp g)_k\|_{C^j(U)}$, which amounts to
  control, for each $0\leq n\leq k, 0 \leq l\leq k-n$, the $C^j$-norm of
  \[
    (x,z)\mapsto \Delta_{v}^n\left(f_l(x,\overline{y}(x,v,\overline{z}))g_{k-n-l}(y(x,v,\overline{z}),\overline{z})J(x,v,\overline{z})\right)_{v=0}.
    \]
  This bidifferential operator acting on $f_l$ and $g_{k-n-l}$
  coincides, up to a multiplicative factor, with the operator $B_n$ considered in Proposition
  \ref{prop:bound-nb-deriv}. Indeed, if $f=f_0$ and $g=g_0$, then
  \[
    (f\sharp g)_k(x,\overline{z})=\cfrac{\Delta^k_{v}}{k!}\left(f_0(x,\overline{y}(x,v,\overline{z}))g_0(y(x,v,\overline{z}))J(x,v,\overline{z})\right)_{v=0}=B_k(f_0,g_0),
    \]
  where $(B_k)_{k\geq 0}$ is the sequence of bidifferential operators appearing
  in Proposition \ref{prop:bound-nb-deriv}.
  In particular, when expanding
\[
    \Delta^n_{v}\left(f_l(x,\overline{y}(x,v,\overline{z}))g_{k-n-l}(y(x,v,\overline{z}),\overline{z})J(x,v,\overline{z})\right)_{v=0},
  \]
  using the Leibniz and Fa\`a di Bruno formulas, no derivative of $f_l$
  and $g_{k-n-l}$ of order greater than $n$ will appear. Let us write
  this expansion.

  Until the end of the proof, $C^j$ or analytic norms of functions are
  implicitly on the domain $U$ or $U\times U$.
  
  For every $n\in \N_0$, by the multinomial formula, there holds
\[
\Delta_{v}^n=\left(\sum_{i=1}^{2d}\cfrac{\partial^2}{\partial v_j^2}\right)^n=\sum_{\substack{\mu\in \N_0^{2d}\\|\mu|=n}}\cfrac{n!}{\mu!}\,\partial_{v}^{2\mu}.
\]
  Applying the generalised Leibniz rule twice, one has then
  \begin{multline*}
    \Delta^n_{v}\left(f_l(x,\overline{y}(x,v,\overline{z}))g_{k-n-l}(y(x,v,\overline{z}),\overline{z})J(x,v,\overline{z})\right)_{v=0}
  \\=\sum_{\substack{|\mu|=n\\\nu_1+\nu_2\leq
      2\mu}}\cfrac{n!(2\mu)!}{\mu!\nu_1!\nu_2!(2\mu-\nu_1-\nu_2)!}\,\partial^{\nu_1}_{v}f_l(x,\overline{y}(x,v,\overline{z}))_{v=0}\\
  \times \partial^{\nu_2}_{v}g_{k-n-l}(y(x,v,\overline{z}),\overline{z})_{v=0}\partial^{2\mu-\nu_1-\nu_2}_{v}J_{v=0}.
\end{multline*}
By Proposition \ref{prop:bound-nb-deriv}, in the formula above one can replace
$\partial_v^{\nu_1}f(x,\overline{y}(x,v,\overline{z}))_{v=0}$ by its truncation  into a differential operator of degree less than $n$, applied
on $f$, which we denote $(\partial_{\kappa}^{\nu_1})^{[\leq
  n]}f(x,\overline{z})$ (similarly as in Lemma \ref{lem:ctrl-cht-var}). Similarly one can replace $\partial_v^{\nu_2}g(y(x,v,\overline{z}),\overline{z})_{v=0}$ by
$(\partial_{\kappa}^{\nu_2})^{[\leq n]}g(x,\overline{z})$.
Then
\begin{multline*}
    \Delta^n_{v}\left(f_l(x,\overline{y}(x,v,\overline{z}))g_{k-n-l}(y(x,v,\overline{z}),\overline{z})J(x,v,\overline{z})\right)_{v=0}
  \\=\sum_{\substack{|\mu|=n\\\nu_1+\nu_2\leq
      2\mu}}\cfrac{n!(2\mu)!}{\mu!\nu_1!\nu_2!(2\mu-\nu_1-\nu_2)!}\,(\partial^{\nu_1}_{\kappa})^{[\leq
    n]}f_l(x,\overline{z})\\ \times(\partial^{\nu_2}_{\kappa})^{[\leq n]}g_{k-n-l}(x,\overline{z})\partial^{2\mu-\nu_1-\nu_2}_{v}J_{v=0},
\end{multline*}
with, by Lemma \ref{lem:control-poly-binom},
\begin{align*}
  \cfrac{n!\mu_1!}{\nu_1!\nu_2!(2\mu-\nu_1-\nu_2)!}&=\cfrac{n!}{\mu!}\,\cfrac{(2\mu)!}{\nu_1!(2\mu-\nu_1)!}\,\cfrac{(2\mu-\nu_1)!}{\nu_2!(2\mu-\nu_1-\nu_2)!}\\
                                                   &\leq
                                                     \cfrac{n!}{\mu!}\,\cfrac{(2n)!}{|\nu_1|!(2n-|\nu_1|)!}\,\cfrac{(2n-|\nu_1|)!}{|\nu_2|!(2n-|\nu_1|-|\nu_2|)!}\\
                                                   &=\cfrac{n!}{\mu!}\binom{2n}{|\nu_1|,|\nu_2|}\leq
                                                     (2d)^n\binom{2n}{|\nu_1|,|\nu_2|}.
\end{align*}
Moreover, applying Proposition \ref{prop:prod-Cj} twice,
\begin{multline*}
  \|(\partial^{\nu_1}_{\kappa})^{[\leq
    n]}f_l(x,\overline{z})(\partial^{\nu_2}_{\kappa})^{[\leq n]}g_{k-n-l}(x,\overline{z})\partial^{2\mu-\nu_1-\nu_2}_{v}J_{v=0}\|_{C^j}\\
  \leq
  \sum_{j_1+j_2\leq
    j}\binom{j}{j_1,j_2}\|(\partial^{\nu_1}_{\kappa})^{[\leq
    n]}f_l(x,\overline{z})\|_{C^{j_1}}\\ \times \|(\partial^{\nu_2}_{\kappa})^{[\leq n]}g_{k-n-l}(x,\overline{z})\|_{C^{j_2}}
  \|\partial^{2\mu-\nu_1-\nu_2}_{v}J_{v=0}\|_{C^{j-j_1-j_2}}.
  \end{multline*}
In particular, using the notation
 $(\nabla^{j}_{\kappa})^{[\leq n]}$ as introduced in Lemma
 \ref{lem:ctrl-cht-var}, one has
\begin{multline*}
    \|n!B_n(f_l,g_{k-n-l})\|_{C^j}\\=\left\|\Delta_{v}^n\left(f_l(x,\overline{y}(x,v,\overline{z}))g_{k-n-l}(y(x,v,\overline{z}),\overline{z})J(x,v,\overline{z})\right)_{v=0}\right\|_{C^j}\\
    \leq
    (2d)^n\sum_{\substack{j_1+j_2\leq j\\i_1+i_2\leq
        2n}}\binom{j}{j_1,j_2}\binom{2n}{i_1,i_2}\|(\nabla^{i_1}_{\kappa})^{[\leq
      n]}f_l(x,\overline{z})\|_{\ell^1(C^{j_1})}\\\times \|(\nabla^{i_2}_{\kappa})^{[\leq
      n]}g_{k-n-l}(x,\overline{z})\|_{\ell^1(C^{j_2})}\|\nabla^{2n-i_1-i_2}_{v}J\|_{\ell^1(C^{j-j_1-j_2})}.
  \end{multline*}
  By Lemma \ref{lem:ctrl-cht-var}, for some $\gamma_r$ depending
  linearly on
  $r$ (but independent of $R,m$), one has
  \begin{multline*}
    \|(\nabla^{i_1}_{\kappa})^{[\leq
      n]}f_l(x,\overline{z})\|_{\ell^1(C^{j_1})}\\\leq
    i_1^{d+1}j_1^{d+1}\|f\|_{S^{r,R}_m}\gamma_r^{i_1}\frac{r^{j_1+i_1}R^l}{(i_1+j_1+l+1)^m}A(i_1,j_1,l,n),
  \end{multline*}

  and
  \begin{multline*}
    \|(\nabla^{i_2}_{\kappa})^{[\leq
      n]}g_{k-n-l}(x,\overline{z})\|_{\ell^1(C^{j_2})}\\\leq i_2^{d+1}j_2^{d+1}\|g\|_{S^{2r,2R}_m}\gamma_r^{i_2}\frac{(2r)^{j_2+i_2}(2R)^{k-n-l}}{(i_2+j_2+l+1)^m}A(i_2,j_2,k-n-l,n),
  \end{multline*}
  where
  \[
    A(i,j,l,n)=\begin{cases}(i+j+l)! & \text{if $i\leq n$},\\
      \max((n+j+l)!(i-n)!, (j+l)!i!)& \text{otherwise},\end{cases}
  \]
  
  The real-analytic function $J$ belongs to some fixed analytic space,
  so that there exists $r_0,m_0$ such that.
  \[
    \|J\|_{C^j}\leq C_J\cfrac{r_0^jj!}{(j+1)^{m_0}},
    \]
    If $r\geq 2r_02^{m-m_0}$, by Proposition \ref{prop:move-m-r}, one has
    \[
      \|J\|_{C^j}\leq C_J\cfrac{(r/2)^jj!}{(j+1)^m},
    \]
    hence
\begin{multline*}
  \|(f\sharp g)_k\|_{C^j}\leq\\
  C_J\|f\|_{S^{r,R}_m}\|g\|_{S^{2r,2R}_m}\cfrac{(2r)^j(2R)^k(j+k)!}{(k+j+1)^m}\sum_{n=0}^k\left(\cfrac{\gamma_r
        r^2}{R}\right)^{\hspace{-0.2em}n\hspace{0.2em}}\sum_{l=0}^{k-n}\sum_{i_1+i_2\leq 2n}\sum_{j_1+j_2\leq
      j}\\
\cfrac{(2n)!j!A(i_1,j_1,l,n)A(i_2,j_2,k-l,n)(2n+j-j_1-j_2-i_1-i_2)!}{2^{2n+j-j_1-j_2-i_1-i_2}2^{j_1+i_1+l}i_1!i_2!j_1!j_2!(2n-i_1-i_2)!(j-j_1-j_2)!n!(k+j)!}
    \\\cfrac{i_1^di_2^dj_1^dj_2^d(k+j+1)^m}{(j_1+i_1+l+1)^m(j_2+i_2+k-n-l+1)^m(j+2n-i_1-i_2-j_1-j_2+1)^m}.
  \end{multline*}

  {\bf Second step.}
  
  Let us control the quotient of factorials above. There holds
\[
  \cfrac{(2n+j-j_1-j_2-i_1-i_2)!}{2^{2n+j-j_1-j_2-i_1-i_2}(j-j_1-j_2)!(2n-i_1-i_2)!}=\cfrac{\binom{2n+j-j_1-j_2-i_1-i_2}{j-j_1-j_2}}{2^{2n+j-j_1-j_2-i_1-i_2}}\leq
  1.
\]
Thus, the middle line in the control on $\|(f\sharp g)_k\|_{C^j}$ is
smaller than
\[
  \cfrac{(2n)!j!A(i_1,j_1,l,n)A(i_2,j_2,k-l,n)}{2^{j_1+i_1+l}i_1!i_2!j_1!j_2!n!(k+j)!}.
\]
Let us prove that, if $i_1\leq 2n$, $i_2\leq 2n$, $0\leq l \leq k-n$,
$j_1+j_2\leq j$, then
\[
  \cfrac{(2n)!j!A(i_1,j_1,l,n)A(i_2,j_2,k-l,n)}{2^{j_1+i_1+l}i_1!i_2!j_1!j_2!n!(k+j)!}\leq
  4^n.
\]
For the moment, let us focus on the $i_1\leq n,i_2\leq n$ case.
As $i_1\geq 0$ one has $\frac{1}{2^{i_1}}\leq 1$ and it
remains to control
\[
  \cfrac{(2n)!j!(j_1+i_1+l)!(j_2+i_2+k-n-l)!}{2^{j_1+l}i_1!i_2!j_1!j_2!n!(k+j)!}.
\]
This expression is increasing with respect to $i_1$ and $i_2$, so that
we only need to control the $i_1=i_2=n$ case, which is
\[
  \cfrac{(2n)!j!(j_1+n+l)!(j_2+k-l)!}{2^{j_1+l}(n!)^3j_1!j_2!(k+j)!}
  \]

Moreover, the expression
above is log-convex with respect to $l$, so that we only need to
control the $l=0$ and $l=k-n$ case.

If $l=0$ we are left with
\[
  \cfrac{(2n)!j!(j_1+n)!(k+j_2)!}{2^{j_1}(n!)^3j_1!j_2!(k+j)!}=2^n\binom{2n}{n}\cfrac{\binom{j_1+n}{n}}{2^{j_1+n}}\,\cfrac{\binom{k+j+j_2}{j_2}}{\binom{k+j+j_2}{j}}\leq 4^n\cfrac{\binom{k+j+j_2}{j_2}}{\binom{k+j+j_2}{j}}.
\]
To conclude, $j$ is closer from $\cfrac{k+j+j_2}{2}$ than $j_2$ since
$j\geq j_2$, so that
$\frac{\binom{k+j+j_2}{j_2}}{\binom{k+j+j_2}{j}}\leq 1$,
hence the claim.

If $l=k-n$, one has
\[
  \cfrac{(2n)!j!(j_1+k)!(j_2+n)!}{2^{j_1+k-n}(n!)^3j_1!j_2!(k+j)!}=2^n\binom{2n}{n}\cfrac{\binom{j_1+k}{k}}{2^{j_1+k}}\,\cfrac{\binom{j_2+n}{n}}{\binom{j+k}{k}}\leq 4^n.
\]
We now consider the case $i_1\geq n$ or $i_2\geq n$. We need to
replace $(i_1+j_1+l)!$ with either $(j_1+l)!i_1!$ or
$(j_1+l+n)!(i_1-n)!$. One has
\begin{align*}
  \cfrac{(j_1+l)!i_1!}{i_1!}=(j_1+l)!&\leq \cfrac{(j_1+l+n)!}{n!}\\
  \cfrac{(j_1+l+n)!(i_1-n)!}{i_1!}&\leq
                                   \cfrac{(j_1+l+n)!i_1!}{i_1!n!}=\cfrac{(j_1+l+n)!}{n!}.
\end{align*}
The same inequalities apply with $i_1,j_1$ replaced with $i_2,j_2$.
Hence, in all cases, we are left with
\[
  \cfrac{(2n)!j!(j_1+n+l)!(j_2+k-l)!}{2^{j_1+l}(n!)^3j_1!j_2!(k+j)!},
\]
which we just proved to be smaller than $4^n$.

This yields
\begin{multline*}
      \|(f\sharp g)_k\|_{C^j}\leq\\
    C_J\|f\|_{S^{r,R}_m}\|g\|_{S^{2r,2R}_m}\cfrac{(2r)^j(2R)^k(j+k)!}{(k+j+1)^m}\sum_{n=0}^k\left(\cfrac{4\gamma_r
        r^2}{R}\right)^n\sum_{l=0}^{k-n}\sum_{i_1,i_2=0}^n\sum_{j_1+j_2\leq
      j}\\\cfrac{(k+j+1)^mi_1^di_2^dj_1^dj_2^d}{(j_1+i_1+l+1)^m(j_2+i_2+k-n-l+1)^m(j+2n-i_1-i_2-j_1-j_2+1)^m}.
  \end{multline*}
  We are almost in position to apply Lemma \ref{prop:lem-hard}; since
  \[
    (k+j+n+1)^m\geq (k+j+1)^m,
  \]
  one has
  \begin{multline*}
      \|(f\sharp g)_k\|_{C^j}\leq\\
    C_J\|f\|_{S^{r,R}_m}\|g\|_{S^{2r,2R}_m}\cfrac{(2r)^j(2R)^k(j+k)!}{(k+j+1)^m}\sum_{n=0}^k\left(\cfrac{4\gamma_r
        r^2}{R}\right)^n\sum_{l=0}^{k-n}\sum_{i_1,i_2=0}^n\sum_{j_1+j_2\leq
      j}\\\cfrac{i_1^di_2^dj_1^dj_2^d(k+j+n+1)^m}{(j_1+i_1+l+1)^m(j_2+i_2+k-n-l+1)^m(j+2n-i_1-i_2-j_1-j_2+1)^m}.
  \end{multline*}
  Applying Lemma \ref{prop:lem-hard} yields, for $m$ large enough
  depending on $d$,
  \[
    \|(f\sharp g)_k\|_{C^j}\leq
    C_J\|f\|_{S^{r,R}_m}\|g\|_{S^{2r,2R}_m}\cfrac{(2r)^j(2R)^k(j+k)!}{(k+j+1)^m}\sum_{n=0}^k\left(\cfrac{4\gamma_r
        r^2}{R}\right)^n.
  \]
  As long as $R\geq 4\gamma_r r^2$, which is possible if $R$ is chosen
  large enough since $\gamma_r$ depends only on $r$, one can conclude:
  \[
    \|(f\sharp g)_k\|_{C^j}\leq
    2^mC_J\|f\|_{S^{r,R}_m}\|g\|_{S^{2r,2R}_m}\cfrac{(2r)^j(2R)^k(j+k)!}{(k+j+1)^m}.
  \]

  At this stage, we are almost done with the proof: we obtained that
  the formal series which corresponds, in the $C^{\infty}$ class, to
  the composition $T_N^{cov}(f)T_N^{cov}(g)$, belongs to the same analytic
  symbol class than $g$.

  This concludes the proof.
  \end{proof}

\subsection{Inversion of covariant Toeplitz operators and the Bergman kernel}
\label{sec:invers-covar-toepl}
In this subsection we prove Theorem \ref{thr:Szeg-gen} as well as the
second part of Theorem \ref{thr:Compo}. To do so, we first show in
Proposition \ref{prop:cov-inv}, as a
reciprocal to Proposition \ref{prop:cov-compo}, that
if $f$ and $h$ are analytic symbols of covariant Toeplitz operators
with $f_0$ non-vanishing, then there exists an analytic symbol $g$
such that \[T_N^{cov}(f)T_N^{cov}(g)=T_N^{cov}(h)+O(e^{-cN}).\] We then
prove in Proposition \ref{prop:pre-inv} that, under the same hypotheses, $T_N^{cov}(f)$, whose image is
almost contained in $H^0(M,L^{\otimes N})$ by Remark
\ref{prop:cov-close-H_N}, is invertible on this space up to an
exponentially small error. Thus, one can
conclude that, on $H^0(M,L^{\otimes N})$, there holds
\[T_N^{cov}(g)=T_N^{cov}(h)(T_N^{cov}(f))^{-1}+O(e^{-cN}).\]
This allows us to prove Theorem \ref{thr:Szeg-gen}, since
by setting $h=f$ one recovers that the Bergman kernel can be written
as $T_N^{cov}(f)(T_N^{cov}(f))^{-1}=T_N(a)$. Then, the second part of Theorem
\ref{thr:Compo} follows from Proposition \ref{prop:cov-inv} by
setting $h=a$.

Following the lines of  Proposition \ref{prop:cov-compo}, let us try to
construct inverses for analytic symbols.

\begin{prop}\label{prop:cov-inv}
  Let $U$ denote a small neighbourhood of the diagonal in $M\times M$ and
  let $f,h\in S^{r_0,R_0}_{m_0}(U)$ be analytic symbols, holomorphic
  in the first variable and anti-holomorphic in the second variable,
  for some $r_0,R_0,m_0$. Suppose that the principal symbol $f_0$ of
  $f$ is bounded away from zero on $U$.

  Then there exists $r,R,m$ as well as $g\in S^{r,R}_m(U)$,
  holomorphic in the first variable, anti-holomorphic in the second variable, such that
  \[
    T_N^{cov}(f)T_N^{cov}(g)=T_N^{cov}(h)+O(e^{-cN}).
    \]
  \end{prop}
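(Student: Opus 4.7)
The plan is to construct $g$ order by order so that $f \sharp g = h$ as formal series, and then to invoke Proposition~\ref{prop:cov-compo} to deduce the exponentially small error. Writing the star product via the bidifferential operators $B_n$ from Proposition~\ref{prop:bound-nb-deriv}, the identity $(f \sharp g)_k = h_k$ reads
\[
  B_0(f_0, g_k) + \sum_{\substack{n+\ell+i = k \\ i < k}} B_n(f_\ell, g_i) = h_k,
\]
and since $B_0(f_0, g_k) = f_0 \cdot g_k$, this determines $g_k$ by the recursion
\[
  g_k = f_0^{-1}\!\left( h_k - \sum_{\substack{n+\ell+i = k \\ i < k}} B_n(f_\ell, g_i) \right).
\]
The base case $g_0 = h_0/f_0$ is a well-defined real-analytic function in a suitable $H(m, r, U)$ class by Proposition~\ref{prop:alg-holo-func}, and it is holomorphic in the first variable, antiholomorphic in the second, because $f_0$ and $h_0$ are.

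The main work is to prove that the sequence $(g_k)_{k \geq 0}$ constructed this way is an analytic symbol in some $S^{r,R}_m(U)$. I would proceed exactly as in the proof of the invertibility statement in Proposition~\ref{prop:anal-symb-class}: inflate the parameters to $m \geq m_0$, $r \geq r_0 2^{m-m_0}$, $R \geq R_0 2^{m-m_0}$ (using Proposition~\ref{prop:move-m-r-R}) so that $f, h, f_0^{-1}$ sit inside $S^{r,R}_m(U)$ with controlled norms, and then perform an induction on $k$ with the Ansatz
\[
  \|g_k\|_{C^j(U)} \leq C_g \, \frac{r^j R^k (j+k)!}{(j+k+1)^m}.
\]
To bound the sum over $(n,\ell,i)$ with $i<k$, I would bound each $B_n(f_\ell, g_i)$ exactly as in the proof of Proposition~\ref{prop:cov-compo}: expand $B_n$ via the Faà di Bruno formula through the Morse change of variables, use Proposition~\ref{prop:bound-nb-deriv} to truncate derivatives at order $\leq n$, apply Lemma~\ref{lem:ctrl-cht-var}, then collapse the resulting quotients of factorials and invoke Lemma~\ref{prop:lem-hard} (with $n = 3$ or $4$, $d$ appropriate) to recognize the target expression $\frac{r^j R^k (j+k)!}{(j+k+1)^m}$ up to a factor $C (3/4)^m$.

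The key delicate point, and the main obstacle, is the same as in Proposition~\ref{prop:anal-symb-class}: the induction constant $C_g$ appears on the right-hand side through the terms involving $g_i$ with $i < k$, so a naive bound would give $C_g \leq C \|f\|_{S^{r,R}_m} C_g + \|h\|$, which does not close. The remedy is the same: separate out the leading contribution (multiplication by $f_0^{-1}$ applied to $h_k$), which is independent of $C_g$ and bounded by $\min|f_0|^{-1} \|h\|_{S^{r_0,R_0}_{m_0}}$; all remaining terms in the sum carry either an extra factor $R^{-1} r^2 \gamma_r$ coming from the Wick-type Laplacian (harmless once $R$ is taken large enough relative to $r$), or a factor $(3/4)^m$ coming from Lemma~\ref{prop:lem-hard}. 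Choosing $m$ large enough and $C_g \geq 2 \min|f_0|^{-1} \|h\|_{S^{r_0,R_0}_{m_0}}$ makes the induction close. Finally, with $g \in S^{r,R}_m(U)$ in hand, Proposition~\ref{prop:cov-compo} gives
\[
  T_N^{cov}(f) T_N^{cov}(g) = T_N^{cov}(f \sharp g) + O(e^{-c'N}) = T_N^{cov}(h) + O(e^{-c'N}),
\]
which is the desired identity.
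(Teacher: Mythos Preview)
Your proposal is correct and follows essentially the same route as the paper's proof: recursive construction of $g_k$ from $f\sharp g=h$, induction with the analytic-symbol Ansatz, and closure of the induction via Lemma~\ref{prop:lem-hard} together with $R$ large relative to $r^2$. Two small points to tighten: $B_0(f_0,g_k)$ carries the Jacobian $J$ of the Morse change of variables, so the recursion is $g_k=(f_0J)^{-1}(\cdots)$ rather than $f_0^{-1}(\cdots)$; and among your ``remaining terms'' the extremal one $B_0(f_k,g_0)$ does \emph{not} acquire a $(3/4)^m$ or $R^{-1}r^2\gamma_r$ factor---the paper handles it separately, bounding it by a constant involving $\|g_0\|$ which is fixed independently of $C_g$.
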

  \begin{proof}
    Recalling the proof of Proposition \ref{prop:cov-compo}, let us 
    recover $g$ from $f$ and $h=f\sharp g$. By definition of $h_k$,
    one has
\begin{multline}
      \label{eq:rec-hk}
      g_k(x,\overline{z})f_0(x,\overline{z})J(x,x,\overline{z},\overline{z})=h_k(x,\overline{z})\\-\sum_{n=0}^k\cfrac{\Delta^n_v}{n!}\left(\sum_{\substack{l=0\\l+n>0}}^{k-n}f_l(x,\overline{y}(x,v,\overline{z}))g_{k-n-l}(y(x,v,\overline{z}),\overline{z})J(x,v,\overline{z})\right)_{v=0}.
    \end{multline}
As $f_0$ is bounded away from zero, this indeed defines $g_k$ by induction. Let us try to control $g$ in an analytic space.

    We first let $m$ large enough, and $r\geq 2r_02^{m-m_0}$ as well
    as $R\geq 2R_02^{m-m_0}$. Then, by Lemma \ref{prop:move-m-r-R},
    there exist $C_f,C_h,C_J$ independent of $m,r,R$ such that, for
    every $k\geq 0,j\geq 0$,
    \begin{align*}
      \|f_k\|_{C^j(U)}&\leq
                     C_f\cfrac{(r/2)^{j}(R/2)^k(j+k)!}{(j+k+1)^m}\\
      \|h_k\|_{C^j(U)}&\leq
                     C_h\cfrac{r^{j}r^k(j+k)!}{(j+k+1)^m}\\
      \|J\|_{C^j(U\times U)}&\leq C_J\cfrac{(r/2)^jj!}{(j+1)^m}.
    \end{align*}
    Here $J$ denotes again the Jacobian in the change of variables
    corresponding to the Morse lemma for the phase $\Phi_1$.

    We first note that
    \[
      g_0(x,\overline{z})=f_0(x,\overline{z})^{-1}h_0(x,\overline{z})J(x,x,\overline{z},\overline{z}),
    \]
    so that, by Lemma \ref{prop:alg-holo-func}, there exists $C_0$
    such that, for every $r\geq 2r_02^{m-m_0}$ and $R\geq
    2R_02^{m-m_0}$, for  every
    $j\geq 0$,
    \[
      \|g_0\|_{C^j(U)}\leq C_0\cfrac{r^jj!}{(j+1)^m}.
    \]
    
    Let us prove by induction on $l\geq 1$ that, for some fixed $C_g,m,r,R$,
    for every $j\geq 0$, one has
    \[
      \|g_l\|_{C^j}\leq C_g\cfrac{r^jR^l(j+l)!}{(j+l+1)^m}.
    \]
    Over the course of the induction, we will fix the values of $C_g,m,r,R$.

    Suppose that a control above is true for indices up to
    $l=k-1$. Then, from the recursive formula
    \eqref{eq:rec-hk},
    if we repeated the proof of Proposition \ref{prop:cov-compo}, we
    would obtain
    \[
      \|g_k\|_{C^j}\leq C(C_h+C_gC_fC_J)\cfrac{r^jR^k(j+k)!}{(j+k+1)^m}.
    \]
    This is not enough, as the constant $C(C_h+C_gC_fC_J)$ appearing here might be
    greater than $C_g$. However, as we will see, the constant can be
    made arbitrarily small by choosing $C_g$ large enough, as well as $m$ large enough, depending on
    $f$, and $R/r^2$ large enough.

    Let $C_1=C\|(f_0J)^{-1}\|_{H(m,r,U)}$ where $C$ is the constant
    appearing in Proposition \ref{prop:alg-holo-func}.
    
    There holds \[C_h\leq \frac{C_g}{4C_1}\] if $C_g$ is large enough with respect to $C_h,C_f,C_J,C_0$. It remains to estimate the second
    term on the right-hand side of \eqref{eq:rec-hk}.

    Let us isolate the $n=0,l=k$ term in \eqref{eq:rec-hk}. This term is $-g_0Jf_k$, and the $S^{r,R}_m(U)$-norm
    of $g_0Jf$ is
    again smaller than $\frac{C_g}{4C_1}$ if $C_g$ is large enough with
    respect to $C_fC_0C_J$.

    Repeating the proof of Proposition \ref{prop:cov-compo}, the
    $n=0,l<k$ terms in \eqref{eq:rec-hk} are bounded in $C^j$-norm by
    \begin{multline*}
      CC_JC_fC_g\cfrac{r^jR^k(j+k)!}{(j+k+1)^m}\\ \times\sum_{l=1}^{k-1}\sum_{j_1+j_2\leq
        j}\cfrac{(j+k+1)^m}{(j_1+l+1)^m(j_2+k-l+1)^m(j-j_1-j_2+1)^m}.
    \end{multline*}
    
    By Lemma \ref{prop:lem-hard}, since no term in the sum
    \begin{multline*}
      \sum_{\substack{1\leq l\leq k-1\\j_1+j_2\leq
        j}}\cfrac{(j+k+1)^m}{(j_1+l+1)^m(j_2+k-l+1)^m(j-j_1-j_2+1)^m}\\=\sum_{\substack{i_1+i_2+i_3=j+k\\i_1\geq
          1\\i_2\geq 1}}\cfrac{(j+k+1)^m}{(i_1+1)^m(i_2+1)^m(i_3+1)^m}
    \end{multline*}
    contribute as $1$, by Lemma \ref{prop:lem-hard} (with $d=0$ and $n=3$), this sum is smaller than $C(3/4)^{m}$ for some
    $C>0$. Hence, if $m$ is large enough, this contribution is also
    smaller than $\frac{C_g}{4C_1}$. Now $m$ is fixed.

    It remains to control the $n\geq 1$ terms in \eqref{eq:rec-hk}. From the proof of
    Proposition \ref{prop:cov-compo},
    their sum is smaller than
    \[
      CC_JC_fC_g\sum_{n=1}^k\cfrac{r^jR^k(j+k)!}{(j+k+1)^m}\left(\cfrac{4\gamma_r r^2}{R}\right)^n.
    \]
    As long as $R/r^2$ is large enough with respect to $\gamma_r
    C_JC_f$, (which is possible if $R$ is large enough since
    $\gamma_r=Cr$ for some fixed $C$), this is again smaller than
    $\frac{C_g}{4C_1}$.

    In conclusion,
    \[
      \|g_kf_0J\|_{C^j}\leq
      \cfrac{C_g}{C_1}\,\cfrac{r^jR^k(j+k)!}{(j+k+1)^m}.
    \]
    In particular, by Lemma \ref{prop:alg-holo-func}, and since
    $\|(f_0J)^{-1}\|_{H(m,r,U)}=C_1/C$, one has
    \[
      \|g_k\|_{C^j}=\|g_kf_0J(f_0J)^{-1}\|_{C^j}\leq
      C_g\cfrac{r_jR^k(j+k)!}{(j+k+1)^m}.
    \]
    This concludes the induction.

    Once the formal series $g$ is controlled in an analytic symbol
    space, the composition $T_N(g)T_N(f)$ coincides with $T_N(h)$ up
    to an exponentially small error as in the end of the proof of
    Proposition \ref{prop:cov-compo}, hence the claim.
  \end{proof}

  \begin{prop}\label{prop:pre-inv}
    Let $f$ be a function on $U$, holomorphic with respect to the
    first variable, anti-holomorphic with respect to the second variable. If $f$
    is nonvanishing then $S_NT_N^{cov}(f)$ has an inverse on
    $H^0(M,L^{\otimes N})$, with operator norm bounded independently of $N$.
  \end{prop}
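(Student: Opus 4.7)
\begin{preuve}
My plan is to establish the lower bound $\|S_N T_N^{cov}(f)\phi\|_{L^2}\geq c\|\phi\|_{L^2}$ on $H^0(M,L^{\otimes N})$, from which invertibility with uniformly bounded inverse follows at once. The argument proceeds through three reductions.

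First, I will reduce the problem to bounding $\|T_N^{cov}(f)\phi\|_{L^2}$ from below on $H^0$. Since $\Psi^N(x,y)$ and $f(x,y)$ are both holomorphic in $x$, the kernel $N^d\mathbb{1}_U(x,y)\Psi^N(x,y)f(x,y)$ is holomorphic in $x$ away from $\partial U$; on $\partial U$, which lies at positive distance from the codiagonal, $\Psi^N$ is exponentially small in $N$. Hence $\|\bar\partial_x T_N^{cov}(f)\phi\|_{L^2}\leq Ce^{-cN}\|\phi\|_{L^2}$, and Proposition \ref{prop:Kohn} yields $\|(I-S_N)T_N^{cov}(f)\phi\|_{L^2}\leq Ce^{-cN}\|\phi\|_{L^2}$, so $\|S_N T_N^{cov}(f)\phi\|$ and $\|T_N^{cov}(f)\phi\|$ coincide modulo exponentially small errors on $H^0$.

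Next, I will write $\|T_N^{cov}(f)\phi\|^2=\langle T_N^{cov}(f)^*T_N^{cov}(f)\phi,\phi\rangle$ and use the identity $T_N^{cov}(f)^*=T_N^{cov}(f^*)$, where $f^*(x,y):=\overline{f(y,x)}$ is again holomorphic in the first and anti-holomorphic in the second variable, with $f^*(x,x)=\overline{f(x,x)}$. Viewing $f$ and $f^*$ as trivial analytic symbols (only a principal term), Proposition \ref{prop:cov-compo} gives $T_N^{cov}(f^*)T_N^{cov}(f)=T_N^{cov}(f^*\sharp f)+O(e^{-c'N})$, where the principal symbol of $f^*\sharp f$ restricted to the codiagonal equals $|f(x,x)|^2$ up to a positive analytic Jacobian factor, hence is bounded below by some $c_0^2>0$ thanks to the non-vanishing hypothesis and compactness of $M$. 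It remains to show that for an analytic symbol $h$ with $h_0(x,x)\geq c_0^2>0$, one has $\langle T_N^{cov}(h)\phi,\phi\rangle\geq (c_0^2/2)\|\phi\|^2$ on $H^0$ for $N\geq N_0$. I will prove this by comparing $T_N^{cov}(h)|_{H^0}$ to the contravariant Toeplitz operator $T_N(g)$ with $g(x)=h_0(x,x)$ via the standard $C^\infty$ Bergman kernel expansion and stationary phase: this yields $\|T_N^{cov}(h)-T_N(g)\|_{H^0\to H^0}=O(N^{-1})$, while $T_N(g)=S_NgS_N\geq (\inf g)I_{H^0}\geq c_0^2 I_{H^0}$ since $g$ is real and bounded below by $c_0^2$.

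Combining the three steps yields $\|S_N T_N^{cov}(f)\phi\|\geq c\|\phi\|$ on $H^0$ for $N\geq N_0$. For the finitely many smaller $N$, the space $H^0(M,L^{\otimes N})$ is finite-dimensional, invertibility reduces to injectivity, which can be checked individually; taking the maximum of the resulting inverse operator norms over $N<N_0$ then combines with the uniform bound for large $N$ to give a uniform bound for all $N$. The main technical obstacle is the comparison between covariant and contravariant Toeplitz operators in the last step, but this is a classical ingredient of the $C^\infty$ Berezin--Toeplitz calculus, and the $O(N^{-1})$ precision is amply sufficient.
\end{preuve}
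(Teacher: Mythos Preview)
Your approach is sound but differs from the paper's. The paper argues in one line: the $C^{\infty}$ covariant calculus (Proposition~\ref{prop:bound-nb-deriv}, known from \cite{charles_berezin-toeplitz_2003}) furnishes a formal symbol $g$ with $g_0=1/(f_0J)$ such that $A_N:=S_NT_N^{cov}(g)S_N$ satisfies $A_N S_NT_N^{cov}(f)=S_N+O(N^{-1})$ on $H^0$, and a Neumann series finishes. Your route through $T_N^{cov}(f)^*T_N^{cov}(f)$ and positivity of contravariant operators is a legitimate alternative; it trades the one-line parametrix for a structural argument that makes the lower bound explicit in terms of $\inf_M|f|$.

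Two small points. First, the identification $T_N^{cov}(h)|_{H^0}=T_N(g)+O(N^{-1})$ with $g(x)=h_0(x,x)$ is off by a normalisation: since $S_N=T_N^{cov}(a)$ with principal term $a_0$, the contravariant principal symbol of $T_N^{cov}(h)$ on $H^0$ is $h_0(x,x)/a_0(x,x)$, not $h_0(x,x)$ (equivalently, $(f^*\sharp f)_0=|f|^2/a_0$ on the diagonal, as one sees from $a\sharp a=a$). This is harmless for your argument because $a_0>0$, but the claim as written is inaccurate. Second, your treatment of small $N$ is a genuine logical slip: finite-dimensionality does not by itself give injectivity of $S_NT_N^{cov}(f)$, and you offer no argument for it. The paper's proof has the same limitation (it, too, only yields invertibility for $N\geq N_0$); in the application to Theorem~\ref{thr:Szeg-gen} this is immaterial since the constant $C$ absorbs finitely many $N$, so the honest statement is ``for $N$ large enough'' rather than ``checked individually''.
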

  \begin{proof}

    One can invert $S_NT_N^{cov}(f)$ by a formal covariant symbol, that is,
    up to an $O(N^{-K})$ error for any fixed $K$. In particular, there exists
    an operator $A_N$ on $H^0(M,L^{\otimes N})$ such that
    $A_NS_NT_N^{cov}(f)=S_N+O(N^{-1}),$ and such that the operator norm of $A_N$
    is bounded independently on $N$.

    Since $A_NS_NT_N^{cov}(f)$ is invertible on $H^0(M,L^{\otimes N})$,
    so is $S_NT_N^{cov}(f)$, and the operator norm
    of this inverse is $\|A_N\|_{L^2\to L^2}(1+O(N^{-1}))$, which is bounded
    independently on $N$, hence the claim.
  \end{proof}
  
Let us now conclude the proofs of Theorems \ref{thr:Szeg-gen} and \ref{thr:Compo}.
  
    Let $U$ be a small neighbourhood of the diagonal in $M\times
    M$ and let $f$ be any
    function on $U$ bounded away from zero, holomorphic in the first
    variable, anti-holomorphic in the second variable.
    From Proposition \ref{prop:cov-inv} there exists an analytic
    symbol $a$ with the same properties, such
    that \begin{equation}\label{eq:fa=f}
      T_N^{cov}(f)T_N^{cov}(a)=T_N^{cov}(f)+O(e^{-cN}).
    \end{equation}
    
    Let $A_N=(S_NT_N^{cov}(f))^{-1}$ on $H^0(M,L^{\otimes N})$; we
    know from Proposition \ref{prop:pre-inv} that $A_N$ is
    well-defined and bounded independently on $N$. Then,
    for any $u\in
    H^0(M,L^{\otimes N})$, one has
    \[
      T_N^{cov}(a)u=u+O(e^{-cN}).
    \]
    Indeed, one can write $u=A_Nv$ and apply the adjoint of
    (\ref{eq:fa=f}) to $v$.
    
    Moreover, by Remark \ref{prop:cov-close-H_N}, there holds
    \[
      (I-S_N)T_N^{cov}(a)=O(e^{-cN}).
    \]
    To conclude, one has $T_N^{cov}(a)=S_N+O(e^{-cN})$. In other terms,
    \[
      S_N(x,\overline{y})=\Psi^N(x,{y})\sum_{k=0}^{cN}N^{d-k}a_k(x,\overline{y})+O(e^{-cN}).\]
    This concludes the proof of Theorem \ref{thr:Szeg-gen}.

    Let us complete the proof of Theorem \ref{thr:Compo}. Its first
    part is Proposition \ref{prop:cov-compo}. For the second part, we
    apply Proposition \ref{prop:cov-inv} with $h=a$, the symbol of the
    Bergman kernel.

    We remark that, from this proof, $a$ may depend on $f$, but
    necessarily $a$ coincides with the formal symbol of the Bergman
    kernel, so that it is uniquely defined.

  \begin{rem}[Normalised covariant Toeplitz operators]
    Let $T_N^{cov}(a)$ denote the approximate Bergman kernel
    constructed in the previous proposition.
    Once the symbol $a$ is known, one can study, as in the proof of
    Proposition \ref{prop:bound-nb-deriv}, \emph{normalised
      covariant} Toeplitz operators, of the form
    \[
      \Psi^N(x,{y})\left(\sum_{k=0}^{cN}N^{-k}a_k(x,\overline{y})\right)\left(\sum_{k=0}^{cN}N^{d-k}f_k(x,\overline{y})\right).
    \]
    Under this convention, the operator associated with the function
    $f=1$ is $S_N+O(e^{-cN})$, as in contravariant Toeplitz quantization.

    Propositions \ref{prop:cov-compo} and \ref{prop:cov-inv} can be
    adapted to normalised covariant Toeplitz operators, for which the
    algebra product is
    \[
      (f,g)\mapsto ((f* a)\sharp (g* a))* a^{* -1}.
    \]
    For instance, since the Cauchy product is continuous on each symbol class, there
    holds, for $m$ large enough, $r>2^m$ and $R>Cr^3$,
    \[
      \|((f* a)\sharp (g* a))* a^{*
        -1}\|_{S^{2r,2R}_m(U)}\leq
      C_a\|f\|_{S^{r,R}_m(U)}\|g\|_{S^{2r,2R}_m(U)}.
    \]
  \end{rem}

  To conclude this section, we prove that analytic contravariant Toeplitz
  opeartors are contained within analytic covariant Toeplitz
  operators.

    \begin{prop}\label{prop:Contra-to-cov}
  Let $f$ be a real-analytic function on $M$. There exists an analytic
  symbol $g$ and $c>0$ such that
  \[
    T_N(f)=T_N^{cov}(g)+O(e^{-cN}).
    \]
\end{prop}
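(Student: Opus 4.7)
The plan is to combine Theorem \ref{thr:Szeg-gen} with the identity $T_N(f) = S_N M_f S_N$ on $H^0(M, L^{\otimes N})$ and then apply analytic stationary phase. Writing $S_N = T_N^{cov}(a) + O(e^{-cN})$, where $a$ is the analytic symbol of the Bergman kernel, gives
\[
T_N(f) = T_N^{cov}(a)\, M_f\, T_N^{cov}(a) + O(e^{-cN})
\]
as operators on $H^0(M, L^{\otimes N})$. The kernel of the right-hand side is a single oscillatory integral
\[
K_N(x, z) = N^{2d} \int_M \Psi^N(x, w)\, a_N(x, \overline{w})\, f(w)\, \Psi^N(w, z)\, a_N(w, \overline{z})\, dw.
\]

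Since $f$ is real-analytic on $M$, I would use its holomorphic extension $\widetilde{f}$ to a neighborhood of the codiagonal in $M \times \overline{M}$, which satisfies $\widetilde{f}(w, \overline{w}) = f(w)$ on $M$ and belongs to some class $H(m_0, r_0, U)$. With this substitution, the integrand becomes $a_N(x, \overline{w})\, \widetilde{f}(w, \overline{w})\, a_N(w, \overline{z})$, which extends holomorphically when $w$ and $\overline{w}$ are treated as independent complex variables. After factoring out $\Psi^N(x, z)$, the remaining phase is the analytic phase $\Phi_1$ of Proposition \ref{prop:Phase}, whose unique non-degenerate critical point in $(w, \overline{w})$ is $(x, \overline{z})$. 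Applying the complex stationary phase lemma (Proposition \ref{prop:HSPL}) then produces an expansion
\[
K_N(x, z) = \Psi^N(x, z) \sum_{k=0}^{cN} N^{d-k}\, g_k(x, \overline{z}) + O(e^{-cN}),
\]
where each $g_k$ is a bidifferential operator of order at most $k$ applied to the amplitude $a \cdot \widetilde{f} \cdot a$ and then evaluated at the critical point. Because both the amplitude and the Morse change of variables are holomorphic in $x$ and antiholomorphic in $z$, each $g_k$ inherits the same structure.

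The main obstacle is to show that the formal series $g = (g_k)_{k \geq 0}$ is an analytic symbol in $S^{r,R}_m(U)$ for appropriate parameters; without such a quantitative estimate one only obtains an $O(N^{-\infty})$ remainder rather than the $O(e^{-cN})$ remainder required for the summation procedure of Definition \ref{def:sum-anal-symb}. The required bound is the three-factor analogue of Proposition \ref{prop:cov-compo}, and I would follow that proof closely: Lemma \ref{lem:ctrl-cht-var} controls derivatives of compositions with the change of variables, a three-factor Wick-type property (generalizing Proposition \ref{prop:bound-nb-deriv}) ensures that only derivatives of order at most $k$ of $\widetilde{f}$ and of $a$ contribute to $g_k$, and Lemma \ref{prop:lem-hard} bounds the resulting combinatorial sums over partitions of $k$. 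The extra factor $\widetilde{f}$ enters as an $N$-independent multiplier in a fixed analytic class, so its $C^j$-norms are absorbed by taking $m, r, R$ sufficiently large (depending on $\|\widetilde{f}\|_{H(m_0, r_0, U)}$), exactly as constants are chosen in the induction of Proposition \ref{prop:cov-compo}. Once $g \in S^{r,R}_m(U)$ is obtained, Definition \ref{def:sum-anal-symb} and Proposition \ref{prop:sum-anal-symb} yield $T_N(f) = T_N^{cov}(g) + O(e^{-cN})$.
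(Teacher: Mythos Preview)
Your proposal is correct and follows essentially the same route as the paper: write $T_N(f)=S_NfS_N$, replace $S_N$ by $T_N^{cov}(a)$ via Theorem \ref{thr:Szeg-gen}, and apply stationary phase with phase $\Phi_1$ to the resulting integral. The paper phrases the analytic-symbol estimate slightly more economically than your three-factor formulation: rather than invoking a generalised Wick property, it simply absorbs $\widetilde{f}$ into the Jacobian, replacing $J$ by $J\widetilde{f}$, and then repeats the proof of Proposition \ref{prop:cov-compo} verbatim with this modified (but still fixed, $N$-independent) analytic weight.
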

\begin{proof}
  Recall from Theorem \ref{thr:Szeg-gen} that there exists an analytic
  symbol $a$ such that
  \[
    S_N=T_N^{cov}(a)+O(e^{-cN}).
  \]
  Letting $\widetilde{f}$ be a holomorphic extension of $f$, the kernel of $T_N(f)=S_NfS_N$ is then
  \begin{multline*}
    (x,z)\mapsto \\\Psi^N(x,{z})\int_{y\in
      M}e^{-N\Phi_1(x,y,\overline{y},\overline{z})}\sum_{k,j=0}^{cN}N^{2d-k-j}a_k(x,\overline{y})a_j(y,\overline{z})\widetilde{f}(y,\overline{y})\dd
    y\\+O(e^{-cN}).
  \end{multline*}
  One can then repeat the proof of Proposition \ref{prop:cov-compo}
  with $J$ replaced with $(x,y,\overline{y},\overline{z})\mapsto
  J(x,y,\overline{y},z)\widetilde{f}(y,\overline{y})$. This yields an analytic
  symbol $g$ such that
  \begin{multline*}
    g_k(x,\overline{z})=\sum_{n=0}^k\cfrac{\widetilde{\Delta}^n_{v}}{n!}\left(\sum_{l=0}^{k-n}a_l(x,\overline{y}(x,v,\overline{z})a_{k-n-l}(y(x,v,\overline{z}),\overline{z})\right.\\
    \left.\vphantom{\sum_{n=0}^k}\times J(x,(y,\overline{y})(x,v,\overline{z}),\overline{z})\widetilde{f}((y,\overline{y})(x,v,\overline{z}))\right)_{v=0},
  \end{multline*}
  that is,
  \[
    T_N^{cov}(g)=S_NfS_N+O(e^{-cN}).
  \]
\end{proof}

\subsection{Exponential decay of low-energy states}
\label{sec:expon-decay-low}
Since covariant analytic Toeplitz operators form an algebra up to
exponentially small error terms (Theorem \ref{thr:Compo}), and since
contravariant Toeplitz operators are a
subset of covariant analytic Toeplitz operators (Proposition
\ref{prop:Contra-to-cov}), one can study exponential
localisation for eigenfunctions of contravariant analytic Toeplitz operators. In this subsection we prove Theorem \ref{thr:Gen-exp-decay}.

Let $f$ be a real-analytic, real-valued fnuction on $M$, let $E\in \R$ and let $(u_N)_{N\geq 1}$
be a normalized family of eigenstates of $T_N(h)$ with eigenvalue
$\lambda_N=E+o(1)$. Let $V$ be an open set at positive distance from $\{f=E\}$.
Let $a\in C^{\infty}(M,\R^+)$ be such that $\supp(a)\cap
\{f=E\}=\emptyset$ and $a=1$ on $V$.
The function $a$ is of course not real-analytic; we will nevertheless prove
that
\[
  T_N(a)u_N=O(e^{-cN}).
\]
This implies Theorem \ref{thr:Gen-exp-decay}, since
\[
  \int_V|u_N|^2=\langle u_N,\mathbb{1}_Vu_N\rangle\leq \langle
  u_N,au_N\rangle=\langle u_N,T_N(a)u_N\rangle=O(e^{-cN}).
  \]
Let $W$ be an open set of $M$ such that \[\supp(a)\subset\subset
  W\subset\subset\{f\neq E\}.\]

On $W$, the function $b-E$ is bounded away from zero. Let us
consider, on a neighbourhood of $\diag(W)$ in $M\times M$, the
analytic covariant symbol $g$ which is such that $T_N^{cov}(g)$ is the
analytic inverse (on this neighbourhood) of $T_N(f-\lambda(N))$. This
symbol is well-defined: 
one can check that the construction of
an inverse symbol in Proposition \ref{prop:cov-inv} only relies on local properties. The function
$f-\lambda(N)$ might not be a classical analytic symbol, since we made
no assumption on the eigenvalue $\lambda(N)$. However,
for every $t$ close to $E$ one can define the microlocal inverse $g_t$
of $f-t$ near $W$, in an analytic class independent of $t$, so that we
define the microlocal inverse of $T_N(f-\lambda(N))$ as the 
operator with kernel
\[
  T_N^{cov}(g):(x,y)\mapsto \Psi^N(x,y)g_{\lambda(N)}(N)(x,y).
  \]

We arbitrarily cut
off $g$ outside a neighbourhood of $\diag(W_1)$, where $W\subset\subset
W_1\subset\subset \{f\neq E\}$ so that $T_N^{cov}(g)$
is a well-defined operator. 
Let us prove that, for some $c>0$ small, one has
\[T_N(a)T_N^{cov}(g)T_N(f-\lambda_N)=T_N(a)+O(e^{-cN}).\]
By construction, uniformly on $x\in
W_1$ and $z\in M$, one has
\[
  \int_{y\in M}T_N^{cov}(g)(x,y)T_N(f-\lambda_N)(y,z)=S_N(x,z)+O(e^{-cN}).
\]
In particular, since $T_N(a)$ is bounded by $O(e^{-cN})$ on $W\times
(M\setminus W_1)$, for $x\in W$ one has
\begin{multline*}
  \int_{y_1\in M,y_2\in
    M}T_N(a)(x,y_1)T_N^{cov}(g)(y_1,y_2)T_N(f)(y_2,z)\\
  = \int_{y_1\in W_1,y_2\in
    M}T_N(a)(x,y_1)T_N^{cov}(g)(y_1,y_2)T_N(f-\lambda_N)(y_2,z)+O(e^{-cN})\\
  =\int_{y_1\in W_1}T_N(a)(x,y_1)S_N(y_1,z)+O(e^{-cN})\\
  =\int_{y_1\in M}T_N(a)(x,y_1)S_N(y_1,z)+O(e^{-cN})=T_N(a)(x,z)+O(e^{-cN}).
\end{multline*}
Moreover, uniformly on $(x\notin W,y\in M)$ there holds $T_N(a)(x,y_1)=O(e^{-cN})$ so that, finally,
\[
  T_N(a)T_N^{cov}(g)T_N(f-\lambda_N)=T_N(a)+O(e^{-cN}).
  \]

In particular,
\[
  0=T_N(a)T_N^{cov}(g)T_N(f-\lambda(N))u_N=T_N(a)u_N+O(e^{-cN}),
\]
which concludes the proof.

\section{Acknowledgements}
\label{sec:acknowledgements}

The author thanks L. Charles, N. Anantharaman, S. V\~u Ng{\d o}c and
J. Sj\"ostrand for useful discussion.




\appendix

\section{The Wick rule}
\label{sec:wick-rule}

Here we present a self-contained proof of Proposition
\ref{prop:bound-nb-deriv}.

It is well-known (see \cite{charles_berezin-toeplitz_2003}, Theorem
  2) that there exists an invertible formal series $a$ of functions defined on
  a neighbourhood of the diagonal in $M\times M$, holomorphic in the
  first variable and anti-holomorphic in the second variable, which correspond to the
  Bergman kernel, that is, such that
  \[T_N^{cov}(a)=S_N+O(N^{-\infty}).\]

  In Theorem \ref{thr:Szeg-gen}, we prove that $a$ is in fact
  an analytic symbol; but for the moment, it is sufficient to know that $a$
  exists as a formal series.
  
  Let us deform  covariant Toeplitz operators by this formal symbol $a$, into
  \emph{normalised} covariant Toeplitz operators of the form $T_N^{cov}(f* a)$.
  Here $*$ denotes the Cauchy product of symbols (Proposition \ref{prop:anal-symb-class}). Since in this case $f$ and $g$ are simply
  holomorphic functions one has $f* a=fa$ and $g* a=ga$.
  
  We will first prove our claim for this modified quantization: that
  is, there exists a sequence of bidifferential operators
  $(C_k)_{k\geq 0}$ acting on functions on a neighbourhood
  of the diagonal in $M\times M$, such that, given two
  such functions $f$ and $g$, if we let
  \[
    f\sharp g=\sum_{k=0}^{+\infty}N^{-k}C_k(f,g)+O(N^{-\infty}),
  \]
  then
  \[
    T_N^{cov}((f\sharp g)* a)=T_N^{cov}(fa)T_N^{cov}(ga)+O(N^{-\infty}).
  \]
  Moreover, $C_k$ is of order at most $k$ in each of its arguments.
  Then, we will relate the coefficients $C_k$ with the coefficients
  $B_k$ in the initial claim.

  The claim is easier to prove for the coefficients $C_k$ because
  normalised covariant Toeplitz quantization follows the Wick
  rule. Indeed, if the function $f$, near a point $x_0$, depends only on the first variable (that is, the
  restriction of $f$ to the diagonal is, near this point, a holomorphic
  function on $M$), then the kernel $T_N^{cov}(a f)(x,y)$, for $x$ close
  to $x_0$, can be written as
  $f(x)T_N^{cov}(a)(x,y)=f(x)S_N(x,y)+O(N^{-\infty})$. In particular,
  for $x$ close to $x_0$ the Wick rule holds:
  \[
    T_N^{cov}(af)T_N^{cov}(ag)(x,y)=T_N^{cov}(afg)(x,y)+O(N^{-\infty}),
  \]
  since by Remark \ref{prop:cov-close-H_N} the kernel of $T_N^{cov}(ag)$ is almost holomorphic in the
  first variable, up to an $O(N^{-\infty})$ error. Thus, locally where
  $f$ depends only on the first variable, there holds
  \[
    \forall k\geq 1,\,C_k(f,g)=0.
  \]
  More generally, we wish to compute
  \[
    N^{2d}\Psi^N(x,z)\int_{M}\exp(N\Phi_1(x,y,\overline{y},\overline{z}))(fa)(N)(x,\overline{y})(ga)(N)(y,\overline{z})\dd y,
  \]
  where we recall that
  \[
    \Phi_1(x,y,\overline{w},\overline{z})=-2\widetilde{\phi}(x,\overline{w})+2\widetilde{\phi}(y,\overline{w})-2\widetilde{\phi}(y,\overline{z})+2\widetilde{\phi}(x,\overline{z}).
  \]
  Here, we write $(fa)(N)(x,\overline{y})$ to indicate that $fa$ is
  holomorphic in the first variable and anti-holomorphic in the second
  variable. Similarly, we write
  $\Phi_1(x,y,\overline{w},\overline{z})$ to indicate that $\Phi_1$ is
  a function on $M_x\times \widetilde{M}_{y,\overline{w}}\times M_{z}$, holomorphic in its
  two first arguments and anti-holomorphic in the third argument; we
  integrate over $ M$ which is the subset of
  $\widetilde{M}$ such that $\overline{w}=\overline{y}$.

  First of all, let us prove a Schur test: operator with kernels of
  the form
  \[
    (x,z)\mapsto
    N^{2d}\int_M\exp(N\Phi_1(x,y,\overline{y},\overline{z}))b(x,y,\overline{y},z)\dd
    y
  \]
  are bounded from $L^2(M,L^{\otimes N})$ to itself independently on $N$; in
  particular, successive integration by parts on $(y,\overline{y})$,
  which will introduce
  negative powers of $N$ in the symbol, will lead to a control of the operator.

  Since for any $(x,z)\in U$ one has $|\Psi^N(x,z)|\leq
  e^{-cN\dist(x,z)^2}$, then there exists $C>0$ such that, for any analytic symbol $b$ on $U\times
  U$, there holds
  \begin{align*}
    &N^{2d}\sup_{x}\int_{M}\left|\Psi^N(x,z)\int_M\exp(N\Phi_1(x,y,\overline{y},\overline{z}))b(N)(x,y,\overline{y},z)\dd
    y\right| \dd z
    \\ \leq &
              N^{2d}\sup_{U\times
              U}|b(N)|\sup_x\int_M\int_M|\Psi^N(x,y)||\Psi^N(y,z)|\dd
              y \dd z\\
    \leq & \sup_{U\times U}|b(N)|N^{2d}\sup_x\int_{M\times
           M}e^{-Nc\dist(x,y)^2-Nc\dist(y,z)^2}\dd y \dd z\\
    \leq & C\sup_{U\times U}|b(N)|.
  \end{align*}
  In particular, by the Schur test, the operator with the kernel above
  is bounded independently on $N$.

As $\partial_y \Phi_1$ vanishes in a non-degenerate way at
$\overline{w}=\overline{z}$, one can write
  \[
    f(x,\overline{w})=f(x,\overline{z})-\partial_y\Phi_1\cdot F_1(x,\overline{z},y,\overline{w}).
  \]
  Thus,
  \begin{multline*}
    N^{2d}\Psi^N(x,z)\int_{M}e^{N\Phi_1(x,y,\overline{y},\overline{z})}(fa)(N)(x,\overline{y})(ga)(N)(y,\overline{z})\dd y\\=
    N^{2d}\Psi^N(x,z)f(x,\overline{z})
    \left(\int_Me^{N\Phi_1(x,y,\overline{y},\overline{z})}a(N)(x,\overline{y})(ga)(N)(y,\overline{z})\dd
      y\right.\\ \left.+N^{-1}
    \int_Me^{N\Phi_1(x,y,\overline{y},\overline{z})}a(N)(x,\overline{y})\partial_M\left[F_1(x,\overline{z},y,\overline{y})(g
      a)(N)(y,\overline{z})\right]\dd y\right).
  \end{multline*}
  The first term in the right-hand side above is equal to
  \[
    f(x,\overline{z})\int_M T_N^{cov}(a)(x,\overline{y})T_N^{cov}(g
    a)(y,\overline{z})\dd y=f(x,\overline{z})T_N^{cov}(g
    a)(x,\overline{z})+O(N^{-\infty}),
  \]
  since $T_N^{cov}(a)=S_N+O(N^{-\infty})$.
  
  In the second line, which is of order $N^{-1}$ by a Schur test, derivatives of $g$
  of order at most $1$ appear.
  This remainder can be written as
  \begin{multline*}
    N^{2d-1}\Psi^N(x,{z})\int_Me^{N\Phi_1(x,y,\overline{y},\overline{z})}a(N)(x,\overline{y})\left[\partial_yF_1(x,\overline{z},y,\overline{y})\right](g
      a)(N)(y,\overline{z})\dd y\\
    +
    N^{2d-1}\Psi^N(x,{z})\int_Me^{N\Phi_1(x,y,\overline{y},\overline{z})}a(N)(x,\overline{y})F_1(x,\overline{z},y,\overline{y})[\partial_y(ga)(N)(y,\overline{z})\dd y.
  \end{multline*}
  We recover the initial expression, where $f$ has been replaced with
  either $F_1$ or $\partial_yF_1$, and $g$ has potentially been
  differentiated once. Thus, by induction, the coefficient $C_k(f,g)$ only differentiates at most
  $k$ times on $g$. By duality, $C_k(f,g)$ only
  differentiates at most $k$ times on $f$.

  Let us now relate the coefficients $C_k$ and $B_k$. Let $a^{*
    -1}$ denote the inverse of $a$ for the Cauchy product. One has
  \begin{align*}
    T_N^{cov}(f)T_N^{cov}(g)&=T_N^{cov}((f a^{* -1})*
    a)T_N^{cov}((g a^{* -1})*
    a)+O(N^{-\infty})\\ &=T_N^{cov}((C_k(f,g))_{k\geq 0}*
    a)+O(N^{-\infty}),
  \end{align*}
  so that the
  coefficients $B_k$ in the initial claim are recovered as
  \[
    B_k(f,g)=\sum_{j+l+m\leq k}a_jC_{k-j-l-m}(fa^{* -1}_l,ga^{* -1}_m),
  \]
  thus $B_k$ itself differentiates at most $k$ times on $f$ and at most $k$
  times on $g$.

\bibliographystyle{abbrv}
\bibliography{main}

\begin{thebibliography}{10}

\bibitem{ameur_fluctuations_2011}
Y.~Ameur, H.~Hedenmalm, and N.~Makarov.
\newblock Fluctuations of eigenvalues of random normal matrices.
\newblock {\em Duke mathematical journal}, 159(1):31--81, 2011.

\bibitem{berman_direct_2008}
R.~Berman, B.~Berndtsson, and J.~Sj{\"o}strand.
\newblock A direct approach to {{Bergman}} kernel asymptotics for positive line
  bundles.
\newblock {\em Arkiv f{\"o}r Matematik}, 46(2):197--217, 2008.

\bibitem{bordemann_toeplitz_1994}
M.~Bordemann, E.~Meinrenken, and M.~Schlichenmaier.
\newblock Toeplitz quantization of {{K{\"a}hler}} manifolds and gl ({{N}}),
  {$N\to \infty$} limits.
\newblock {\em Communications in Mathematical Physics}, 165(2):281--296, 1994.

\bibitem{borthwick_introduction_2000}
D.~Borthwick.
\newblock Introduction to {{K{\"a}hler}} quantization.
\newblock {\em Contemporary Mathematics}, 260:91, 2000.

\bibitem{boutet_de_monvel_spectral_1981}
L.~{Boutet de Monvel} and V.~Guillemin.
\newblock {\em The Spectral Theory of {{Toeplitz}} Operators}.
\newblock Number~99 in Annals of {{Mathematics Studies}}. {Princeton University
  Press}, 1981.

\bibitem{boutet_de_monvel_pseudo-differential_1967}
L.~{Boutet de Monvel} and P.~Kr{\'e}e.
\newblock Pseudo-differential operators and {{Gevrey}} classes.
\newblock {\em Annales de l'Institut Fourier}, 17(1):295--323, 1967.

\bibitem{boutet_de_monvel_sur_1975}
L.~{Boutet de Monvel} and J.~Sj{\"o}strand.
\newblock {Sur la singularit{\'e} des noyaux de Bergman et de Szeg{\"o}}.
\newblock {\em Journ{\'e}es {\'e}quations aux d{\'e}riv{\'e}es partielles},
  34-35:123--164, 1975.

\bibitem{charles_aspects_2000}
L.~Charles.
\newblock {\em Aspects Semi-Classiques de La Quantification
  G{\'e}om{\'e}trique}.
\newblock PhD thesis, Universit{\'e} Paris 9, 2000.

\bibitem{charles_berezin-toeplitz_2003}
L.~Charles.
\newblock Berezin-{{Toeplitz Operators}}, a {{Semi}}-{{Classical Approach}}.
\newblock {\em Communications in Mathematical Physics}, 239(1-2):1--28, Aug.
  2003.

\bibitem{charles_analytic_2019}
L.~Charles.
\newblock Analytic {{Berezin}}-{{Toeplitz}} operators.
\newblock {\em arXiv:1912.06819}, 2019.

\bibitem{charles_entanglement_2018}
L.~Charles and B.~Estienne.
\newblock Entanglement entropy and {{Berezin}}-{{Toeplitz}} operators.
\newblock {\em Communications in Mathematical Physics}, Oct. 2018.

\bibitem{chudnovsky_quantum_1988}
E.~M. Chudnovsky and L.~Gunther.
\newblock Quantum tunneling of magnetization in small ferromagnetic particles.
\newblock {\em Physical review letters}, 60(8):661, 1988.

\bibitem{deleporte_low-energy_2016}
A.~Deleporte.
\newblock Low-energy spectrum of {{Toeplitz}} operators: The case of wells.
\newblock {\em Journal of Spectral Theory}, 9:79--125, 2019.

\bibitem{deleporte_low-energy_2017}
A.~Deleporte.
\newblock Low-energy spectrum of {{Toeplitz}} operators with a miniwell.
\newblock {\em Communications in Mathematical Physics}, (to appear), 2019.

\bibitem{deleporte_wkb_2019}
A.~Deleporte.
\newblock {{WKB}} eigenmode construction for analytic {{Toeplitz}} operators.
\newblock {\em arXiv}, 1901.07215, 2019.

\bibitem{deleporte_fractional_2020}
A.~Deleporte.
\newblock Fractional exponential decay in the forbidden region for {{Toeplitz}}
  operators.
\newblock {\em arXiv}, 2001.07921, Jan. 2020.

\bibitem{faure_prequantum_2006}
F.~Faure.
\newblock Prequantum chaos: {{Resonances}} of the prequantum cat map.
\newblock {\em Journal of Modern Dynamics}, 1(2):255--285, 2007.

\bibitem{helffer_multiple_1984}
B.~Helffer and J.~Sj{\"o}strand.
\newblock Multiple wells in the semi-classical limit {{I}}.
\newblock {\em Communications in Partial Differential Equations},
  9(4):337--408, 1984.

\bibitem{helffer_resonances_1986}
B.~Helffer and J.~Sj{\"o}strand.
\newblock R{\'e}sonances en limite semi-classique.
\newblock {\em M{\'e}moires de la Soci{\'e}t{\'e} Math{\'e}matique de France},
  24:1--228, 1986.

\bibitem{hezari_off-diagonal_2017}
H.~Hezari, Z.~Lu, and H.~Xu.
\newblock Off-diagonal asymptotic properties of {{Bergman}} kernels associated
  to analytic {{K{\"a}hler}} potentials.
\newblock {\em International Mathematics Research Notices}, rny081, 2018.

\bibitem{hezari_quantitative_2018}
H.~Hezari and H.~Xu.
\newblock Quantitative upper bounds for {{Bergman}} kernels associated to
  smooth {{K{\"a}hler}} potentials.
\newblock {\em arXiv:1807.00204}, June 2018.

\bibitem{hezari_property_2019}
H.~Hezari and H.~Xu.
\newblock On a property of {{Bergman}} kernels when the {{K{\"a}hler}}
  potential is analytic.
\newblock {\em arXiv:1912.11478}, 2019.

\bibitem{hitrik_non-selfadjoint_2004}
M.~Hitrik and J.~Sj{\"o}strand.
\newblock Non-selfadjoint perturbations of selfadjoint operators in 2
  dimensions {{I}}.
\newblock {\em Annales Henri Poincar{\'e}}, 5(1):1--73, Feb. 2004.

\bibitem{hitrik_rational_2008}
M.~Hitrik and J.~Sj{\"o}strand.
\newblock Rational invariant tori, phase space tunneling, and spectra for
  non-selfadjoint operators in dimension 2.
\newblock In {\em Annales Scientifiques de l'{{{\'E}NS}}}, volume~41, pages
  513--573, 2008.

\bibitem{hitrik_two_2013}
M.~Hitrik and J.~Sj{\"o}strand.
\newblock Two minicourses on analytic microlocal analysis.
\newblock In {\em Algebraic and {{Analytic Microlocal Analysis}}}, pages
  483--540. {Springer}, 2013.

\bibitem{klevtsov_random_2014}
S.~Klevtsov.
\newblock Random normal matrices, {{Bergman}} kernel and projective embeddings.
\newblock {\em Journal of High Energy Physics}, 2014(1):133, 2014.

\bibitem{kohn_harmonic_1963}
J.~J. Kohn.
\newblock Harmonic integrals on strongly pseudo-convex manifolds: {{I}}.
\newblock {\em Annals of mathematics}, pages 112--148, 1963.

\bibitem{kohn_harmonic_1964}
J.~J. Kohn.
\newblock Harmonic integrals on strongly pseudo-convex manifolds: {{II}}.
\newblock {\em Annals of mathematics}, pages 450--472, 1964.

\bibitem{kordyukov_semiclassical_2018}
Y.~A. Kordyukov.
\newblock Semiclassical spectral analysis of {{Toeplitz}} operators on
  symplectic manifolds: {{The}} case of discrete wells.
\newblock {\em arXiv:1809.06799}, Sept. 2018.

\bibitem{le_floch_theorie_2014}
Y.~Le~Floch.
\newblock {\em Th{\'e}orie Spectrale Inverse Pour Les Op{\'e}rateurs de
  {{Toeplitz 1D}}}.
\newblock PhD thesis, Rennes 1, 2014.

\bibitem{martinez_estimations_1992}
A.~Martinez.
\newblock Estimations sur l'effet tunnel microlocal.
\newblock {\em S{\'e}minaire {\'E}quations aux d{\'e}riv{\'e}es partielles
  (Polytechnique)}, 1991:1--10, 1992.

\bibitem{martinez_estimates_1994}
A.~Martinez.
\newblock Estimates on complex interactions in phase space.
\newblock {\em Mathematische Nachrichten}, 167(1):203--254, 1994.

\bibitem{martinez_precise_1994}
A.~Martinez.
\newblock Precise exponential estimates in adiabatic theory.
\newblock {\em Journal of Mathematical Physics}, 35(8):3889--3915, 1994.

\bibitem{martinez_introduction_2002}
A.~Martinez.
\newblock {\em An {{Introduction}} to {{Semiclassical}} and {{Microlocal
  Analysis}}. 2002}.
\newblock Universitext. {Springer}, 2002.

\bibitem{martinez_microlocal_1999}
A.~Martinez and V.~Sordoni.
\newblock Microlocal {{WKB}} expansions.
\newblock {\em Journal of functional analysis}, 168(2):380--402, 1999.

\bibitem{melin_bohr-sommerfeld_2001}
A.~Melin and J.~Sj{\"o}strand.
\newblock Bohr-{{Sommerfeld}} quantization condition for non-selfadjoint
  operators in dimension 2.
\newblock {\em Ast{\'e}risque}, 284:181--244, 2003.

\bibitem{owerre_macroscopic_2015}
S.~A. Owerre and M.~B. Paranjape.
\newblock Macroscopic quantum tunneling and quantum-classical phase transitions
  of the escape rate in large spin systems.
\newblock {\em Physics Reports}, 546:1--60, Jan. 2015.

\bibitem{ross_asymptotics_2016}
J.~Ross and M.~Singer.
\newblock Asymptotics of {{Partial Density Functions}} for {{Divisors}}.
\newblock {\em The Journal of Geometric Analysis}, pages 1--52, Sept. 2016.

\bibitem{rouby_bohrsommerfeld_2017}
O.~Rouby.
\newblock Bohr\textendash{{Sommerfeld Quantization Conditions}} for
  {{Nonselfadjoint Perturbations}} of {{Selfadjoint Operators}} in {{Dimension
  One}}.
\newblock {\em International Mathematics Research Notices}, rnw309, 2017.

\bibitem{rouby_analytic_2018}
O.~Rouby, J.~Sj{\"o}strand, and S.~V{\~u}~Ng{\d o}c.
\newblock Analytic {{Bergman}} operators in the semiclassical limit.
\newblock {\em hal-01851770}, July 2018.

\bibitem{sjostrand_singularites_1982}
J.~Sj{\"o}strand.
\newblock {\em Singularites Analytiques Microlocales}, volume~95 of {\em
  Ast{\'e}risque}.
\newblock {Soc. Math. de France}, 1982.

\bibitem{sjostrand_geometric_1990}
J.~Sj{\"o}strand.
\newblock Geometric bounds on the density of resonances for semiclassical
  problems.
\newblock {\em Duke mathematical journal}, 60(1):1--57, 1990.

\bibitem{sjostrand_resonances_2003}
J.~Sj{\"o}strand.
\newblock Resonances associated to a closed hyperbolic trajectory in dimension
  2.
\newblock {\em Asymptotic Analysis}, 36(2):93--113, 2003.

\bibitem{tsuji_dynamical_2010}
H.~Tsuji.
\newblock Dynamical construction of {{K{\"a}hler}}-{{Einstein}} metrics.
\newblock {\em Nagoya Mathematical Journal}, 199:107--122, Sept. 2010.

\bibitem{voros_wentzel-kramers-brillouin_1989}
A.~Voros.
\newblock Wentzel-{{Kramers}}-{{Brillouin}} method in the {{Bargmann}}
  representation.
\newblock {\em Physical Review A}, 40(12):6814, 1989.

\bibitem{whitney_quelques_1959}
H.~Whitney and F.~Bruhat.
\newblock {Quelques propri{\'e}t{\'e}s fondamentales des ensembles
  analytiques-r{\'e}els}.
\newblock {\em Commentarii Mathematici Helvetici}, 33(1):132--160, Dec. 1959.

\bibitem{zelditch_szego_2000}
S.~Zelditch.
\newblock Szeg{\"o} kernels and a theorem of {{Tian}}.
\newblock {\em Int. Math. Research Notices}, 6, 2000.

\bibitem{zelditch_interface_2016}
S.~Zelditch and P.~Zhou.
\newblock Interface asymptotics of partial {{Bergman}} kernels on
  {{S1}}-symmetric {{K{\"a}hler}} manifolds.
\newblock {\em Journal of Symplectic Geometry}, 17(3):793--856, 2019.

\bibitem{zworski_semiclassical_2012}
M.~Zworski.
\newblock {\em Semiclassical Analysis}, volume 138.
\newblock {American Mathematical Soc.}, 2012.

\end{thebibliography}
\end{document}